\newcommand{\gt}[1]{\mathfrak{#1}}
\newcommand{\mc}[1]{\mathcal{#1}}
\newcommand{\RR}{{\mathbb R}}
\newcommand{\RRm}{\RR^{\times}}
\newcommand{\RRp}{\RR^+}
\newcommand{\PP}{{\mathbb P}}
\newcommand{\ZZ}{{\mathbb Z}}
\newcommand{\ZZp}{\ZZ_{>0}}
\newcommand{\QQ}{{\mathbb Q}}
\newcommand{\QQm}{{\mathbb Q}^{\times}}
\newcommand{\QQp}{{\mathbb Q}^+}
\newcommand{\kk}{{\mathbf k}}
\newcommand{\kkm}{\kk^{\times}}
\newcommand{\kkp}{\kk^+}
\newcommand{\lab}{{\langle}}    
\newcommand{\rab}{{\rangle}}    
\newcommand{\End}{\operatorname{End}}
\newcommand{\Fix}{\operatorname{Fix}}
\newcommand{\Span}{\operatorname{Span}}
\newcommand{\Sym}{{\rm Sym}}
\newcommand{\Alt}{{\rm Alt}}
\newcommand{\Pdet}{P\!\det}
\newcommand{\lev}{\operatorname{lev}}
\newcommand{\levo}{\lev_0}
\newcommand{\val}{\operatorname{val}}
\newcommand{\adj}{\operatorname{adj}}
\newcommand{\PSL}{\operatorname{\textsl{PSL}}}    
\newcommand{\SL}{\operatorname{\textsl{SL}}}      
\newcommand{\PGL}{\operatorname{\textsl{PGL}}}    
\newcommand{\GL}{\operatorname{\textsl{GL}}}      
\newcommand{\SU}{\operatorname{\textsl{SU}}}    
\newcommand{\MM}{\mathbb{M}}    
\newcommand{\Co}{\operatorname{\textsl{Co}}}    
\newtheorem{thm}{Theorem}[section]
\newtheorem{lem}[thm]{Lemma}
\newtheorem{prop}[thm]{Proposition}
\theoremstyle{definition}
\theoremstyle{remark}
\newtheorem*{rmk}{Remark}
\numberwithin{equation}{section}
\begin{document}

\title{
    \textsc{Arithmetic groups and the affine $E_8$ Dynkin diagram}
          }

\author{John F. Duncan
          \footnote{
          Harvard University,
          Department of Mathematics,
          One Oxford Street,
          Cambridge, MA 02138,
          U.S.A.}
          {}\footnote{
          Email: {\tt duncan@math.harvard.edu};\;
          homepage: {\tt
          http://math.harvard.edu/\~{}jfd/}
               }
               }

\date{October 14, 2007}

\maketitle

\begin{abstract}
Several decades ago, John McKay suggested a correspondence between
nodes of the affine $E_8$ Dynkin diagram and certain conjugacy
classes in the Monster group. Thanks to Monstrous Moonshine, this
correspondence can be recast as an assignment of discrete subgroups
of $\PSL_2(\RR)$ to nodes of the affine $E_8$ Dynkin diagram. The
purpose of this article is to give an explanation for this latter
correspondence using elementary properties of the group
$\PSL_2(\RR)$. We also obtain a super analogue of McKay's
observation, in which conjugacy classes of the Monster are replaced
by conjugacy classes of Conway's group --- the automorphism group of
the Leech lattice.
\end{abstract}


\section{Introduction}\label{sec:intro} %

Several decades ago, John McKay suggested the following assignment
of conjugacy classes in the Monster group to nodes of the affine
$E_8$ Dynkin diagram (c.f. \cite[\S14]{ConCnstM}).
\begin{equation}\label{diag:intro_mnstrcorresp}
     \xy
     (0,0)*+{1A}="1A";
     (0,0)*\xycircle(3,2){-};
     (14,0)*+{2A}="2A";
     (28,0)*+{3A}="3A";
     (42,0)*+{4A}="4A";
     (56,0)*+{5A}="5A";
     (70,0)*+{6A}="6A";
     (80,10)*+{3C}="3C";
     (82,-7)*+{4B}="4B";
     (94,-14)*+{2B}="2B";
     {\ar@{-} "1A";"2A"};
     {\ar@{-} "2A";"3A"};
     {\ar@{-} "3A";"4A"};
     {\ar@{-} "4A";"5A"};
     {\ar@{-} "5A";"6A"};
     {\ar@{-} "6A";"3C"};
     {\ar@{-} "6A";"4B"};
     {\ar@{-} "4B";"2B"};
     \endxy
\end{equation}
This has become known as {\em McKay's Monstrous $E_8$ observation}.
Since elements of the Monster group determine principal moduli for
genus zero subgroups of $\PSL_2(\RR)$ via Monstrous Moonshine (c.f.
\cite{ConNorMM},\cite{BorMM}), the assignment
(\ref{diag:intro_mnstrcorresp}) entails a correspondence between
nodes of the affine $E_8$ Dynkin diagram and certain discrete
subgroups\footnote{The notation in (\ref{diag:intro_mdlrcorresp})
for subgroups of $\PSL_2(\RR)$ follows \cite{ConNorMM},
\cite{ConMcKSebDiscGpsM}; we write $n+$ for $\Gamma_0(n)+$, and
$n\|h+$ for $\Gamma_0(n\|h)+$, for example.} of $\PSL_2(\RR)$ that
are commensurable with $\PSL_2(\ZZ)$.
\begin{equation}\label{diag:intro_mdlrcorresp}
     \xy
     (0,0)*+{1}="1A";
     (0,0)*\xycircle(2,2){-};
     (14,0)*+{2+}="2A";
     (28,0)*+{3+}="3A";
     (42,0)*+{4+}="4A";
     (56,0)*+{5+}="5A";
     (70,0)*+{6+}="6A";
     (80,10)*+{3\|3}="3C";
     (82,-7)*+{4\|2+}="4B";
     (94,-14)*+{2}="2B";
     {\ar@{-} "1A";"2A"};
     {\ar@{-} "2A";"3A"};
     {\ar@{-} "3A";"4A"};
     {\ar@{-} "4A";"5A"};
     {\ar@{-} "5A";"6A"};
     {\ar@{-} "6A";"3C"};
     {\ar@{-} "6A";"4B"};
     {\ar@{-} "4B";"2B"};
     \endxy
\end{equation}
In this article we furnish a prescription for recovering this latter
correspondence (\ref{diag:intro_mdlrcorresp}), that is given solely
in terms of elementary properties of the group $\PSL_2(\RR)$.

For the purposes of this article, we will say that a subgroup
$\Gamma<\PSL_2(\RR)$ is {\em arithmetic} if it is commensurable with
$\PSL_2(\ZZ)$ in the strong sense; viz. if the intersection of
$\Gamma$ with $\PSL_2(\ZZ)$ has finite index in each of $\Gamma$ and
$\PSL_2(\ZZ)$.

In \S\ref{sec:arith} we set up a general context for studying
arithmetic subgroups of $\PSL_2(\RR)$. The approach here is closely
modeled on that described in \cite{Con_UndrstndgGamma0N}. In
\S\ref{sec:dyn:nodes} we furnish arithmetic conditions that are
satisfied only by the arithmetic groups appearing in
(\ref{diag:intro_mdlrcorresp}). Then in \S\ref{sec:dyn:edges} we
show how the arithmetic properties of these groups can be used to
recover also the edges of the affine $E_8$ Dynkin diagram
(\ref{diag:intro_mdlrcorresp}).

When considering the correspondence (\ref{diag:intro_mdlrcorresp}),
it is hard not to be reminded of {\em the McKay correspondence},
which may be regarded as giving a prescription for recovering the
extended Dynkin diagrams from representations of finite subgroups of
$\SU(2)$ (c.f. \cite{McK_GrphsSingsFGps}). In particular, the McKay
correspondence indicates how to recover the affine $E_8$ Dynkin
diagram from the category of representations of the binary
icosahedral group, $2.\Alt_5\subset\SU(2)$. Our prescription for
recovering the edges of (\ref{diag:intro_mdlrcorresp}) is
reminiscent of this procedure.

In \S\ref{sec:super} we demonstrate how to obtain a kind of ``super
analogue'' of McKay's Monstrous $E_8$ observation, in which
conjugacy classes of Conway's group $\Co_0$ take on the r\^ole
played by conjugacy classes of the Monster in
(\ref{diag:intro_mnstrcorresp}).

\section{Arithmetic groups}\label{sec:arith} %

Let us agree to say that $\Gamma<\PSL_2(\RR)$ is an {\em arithmetic}
subgroup of $\PSL_2(\RR)$ if the intersection
$\PSL_2(\ZZ)\cap\Gamma$ has finite index both in $\PSL_2(\ZZ)$ and
in $\Gamma$. In order to study the arithmetic subgroups of
$\PSL_2(\RR)$ we will adopt the approach of Conway
\cite{Con_UndrstndgGamma0N}, whereby one analyzes these groups in
terms of their actions on lattices. We review this approach in the
present section. Actually, our exposition will employ a slightly
different language to that of \cite{Con_UndrstndgGamma0N}; the story
is nonetheless the same. As a small supplement to the ideas of
\cite{Con_UndrstndgGamma0N}, we include, in \S\ref{sec:arith:cusps},
a description of how to identify the cusps of certain arithmetic
subgroups of $\PSL_2(\RR)$, in terms of such a group's action on
lattices.

\subsection{Orientation}\label{sec:arith:orient}

Let $\kk$ be an ordered field. Write $\kkm$ for the multiplicative
group of non-zero elements of $\kk$, and write $\kkp$ for its
subgroup of {\em positive elements}. Note that an ordered field is
necessarily of characteristic $0$, so there is a natural embedding
$\QQ\hookrightarrow \kk$.

Let $V$ be a vector space of dimension $n$ over $\kk$. We may
consider the {\em $n$-th exterior power} $\wedge^nV$. This vector
space has a distinguished point; viz. the origin, and we may
consider the complement
$(\wedge^nV)^{\times}=\wedge^nV\setminus\{0\}$. The set
$(\wedge^nV)^{\times}$ is naturally a $\kkm$-torsor. Upon picking
any non-zero element, $w$ say, of $\wedge^nV$, we arrive at a
decomposition of $(\wedge^nV)^{\times}$ into two disjoint
$\kkp$-torsors: $(\wedge^nV)^{\times}=\kkp\cdot w\cup\kkp\cdot(-w)$.
We define an {\em orientation} for $V$ to be a choice of
$\kkp$-torsor in $\wedge^nV$. An {\em oriented vector space over
$\kk$} is a pair $(V,W)$ where $V$ is a vector space over $\kk$, and
$W$ is a $\kkp$-torsor in $\wedge^nV$.

Consider the canonical linear map $\triangle:V^n\to\wedge^nV$ given
by
\begin{gather}\label{eqn:wedgemap}
     (v_1,\ldots, v_n)
     \mapsto
     \triangle (v_1,\ldots, v_n):=v_1\wedge\cdots\wedge v_n.
\end{gather}
The set $\{v_1,\ldots,v_n\}$ underlying the $n$-tuple
$(v_1,\ldots,v_n)$ is a linearly independent subset of $V$ if and
only if $\triangle (v_1,\ldots,v_n)\neq 0$. We say that an $n$-tuple
$(v_1,\ldots,v_n)$ is an {\em ordered basis} for $V$ if
$\triangle(v_1,\ldots,v_n)\neq 0$. We write $\mc{B}$ for the set of
ordered bases for $V$.
\begin{gather}
     \mc{B}:=\triangle^{-1}((\wedge^nV)^{\times})\subset V^n
\end{gather}
For $(V,W)$ an oriented vector space,  we say that an ordered set
$(v_1,\ldots,v_n)\in V^n$ is an {\em oriented basis} for $V$ if
$\triangle (v_1,\ldots,v_n)\in W$. Let us write $\mc{B}^+$ for the
set of oriented bases in $V$.
\begin{gather}
     \mc{B}^+:=\triangle^{-1}(W)\subset V^n
\end{gather}

Let $\End(V)$ denote the $\kk$-algebra of $\kk$-linear
transformations of $V$. We may regard $\End(V)$ as acting on $V$
from the right.
\begin{gather}
     \begin{split}
          V\times \End(V)&\to V\\
          (v,A)&\mapsto v\cdot A
     \end{split}
\end{gather}
This action extends naturally to an (right) action of $\End(V)$ on
$V^n$.
\begin{gather}
     (v_1,\ldots,v_n)\cdot A:=(v_1\cdot A,\ldots,v_n\cdot A)
\end{gather}

As is usual, we take $\GL(V)$ to be the complement of $\det^{-1}(0)$
in $\End(V)$, where $\det:\End(V)\to\kk$ is the unique map such that
\begin{gather}
     \triangle({\bf v}\cdot A)=\det(A)\triangle({\bf v})
\end{gather}
for all ${\bf v}\in V^n$ and $A\in\End(V)$. We take $\SL(V)$ to be
the kernel of the restriction $\det:\GL(V)\to\kkm$, and we set
$\GL^+(V)$ to be the preimage of $\kkp$ under this map.
\begin{align}
     \GL(V)&={\det}^{-1}(\kkm)\\
     \GL^+(V)&={\det}^{-1}(\kkp)\\
     \SL(V)&={\det}^{-1}(1)
\end{align}
Observe that the sets $\mc{B}$ and $\mc{B}^+$ are naturally torsors
for $\GL(V)$ and $\GL^+(V)$, respectively.

For any choice of ordered basis ${\bf v}=(v_1,\ldots, v_n)\in\mc{B}$
there is a corresponding isomorphism $\phi_{{\bf
v}}:\GL(V)\to\GL_n(\kk)$ given by setting $\phi_{{\bf v}}(A)$ to be
the matrix $(a^i_j)\in\GL_n(\kk)$ such that $v_j\cdot A=\sum_i
v_ia_j^i$ for $1\leq j\leq n$. Thus we obtain a right action of
$\GL_n(\kk)$ on $V^n$ subject to a choice of ordered basis ${\bf
v}\in \mc{B}$. Observe that there is also a canonical (left) action
of $\GL_n(\kk)$ on $V^n$ given by setting
\begin{gather}
     (a^i_j)\cdot (v_1,\ldots, v_n):=(v_1',\ldots,v_n')
\end{gather}
where $v_j'=\sum_ia^i_jv_i$. Furthermore, this action preserves the
subset $\mc{B}\subset V^n$; indeed, $\GL_n(\kk)$ acts simply
transitively on $\mc{B}$, so that $\mc{B}$ is naturally a (left)
$\GL_n(\kk)$-torsor. The set $\mc{B}^+$, consisting of oriented
bases for $V$, is naturally a (left) torsor for $\GL_n^+(\kk)$.

\subsection{Lattices}\label{sec:arith:latts}

Let $V=(V,W)$ be an oriented vector space of dimension $n$ over
$\kk$, as in \S\ref{sec:arith:orient}. Write $\mc{B}^+$ for the
subset of $V^n$ consisting of oriented bases for $V$.

A {\em lattice} in $V$ is an additive subgroup, $L$ say, of $V$,
such that $L$ is equivalent to $\ZZ^n$ as a $\ZZ$-module, and such
that $\Span_{\kk}L$ (the $\kk$-linear span of $L$) is $V$. Let us
write $\mc{L}$ for the set of all lattices in $V$. Suppose that
$L\in\mc{L}$, and let $\{v_1,\ldots,v_n\}\subset V$ be a set of
generators for $L$ as a $\ZZ$-module, so that
\begin{gather}
     L=\ZZ v_1+\cdots+\ZZ v_n.
\end{gather}
Then $\{v_1,\ldots,v_n\}$ is a linearly independent subset of $V$,
for if $\sum a_iv_i=0$ for some $a_i\in\kk$, then the $\kk$-linear
span of $L$ is vector space of dimension less than $n$,
contradicting the property that $\Span_{\kk}L=V$. Conversely, if
$\{v_1,\ldots,v_n\}$ is a linearly independent subset of $V$ then
the additive subgroup of $V$ generated by the $v_i$ is a lattice in
$V$. We conclude that there is a natural surjective map from the set
of bases for $V$ to the set of lattices in $V$.
\begin{gather}
     \{v_1,\ldots,v_n\}\mapsto \ZZ v_1+\cdots +\ZZ v_n
\end{gather}
Consider the (pre)composition of this map with the natural map
$\mc{B}^+\to\mc{P}(V)$, sending an $n$-tuple in $\mc{B}^+$ to it's
underlying set. Since any basis for $V$ can be made an oriented
basis by equipping it with a suitable ordering, we conclude that
this composition furnishes a (natural) surjective map from
$\mc{B}^+$ to the set of lattices in $V$.
\begin{gather}
     \begin{split}
          \mc{B}^+&\to\mc{L}\\
          (v_1,\ldots,v_n)&\mapsto \ZZ v_1+\cdots +\ZZ v_n
     \end{split}
\end{gather}
We would like to know when two oriented bases for $V$ determine the
same lattice. Recall that $\mc{B}^+$ is naturally a $\GL_n^+(\kk)$
torsor (c.f.\S\ref{sec:arith:orient}). If
\begin{gather}\label{eqn:arith:latts|idlatts}
     \ZZ v_1+\cdots +\ZZ v_n=\ZZ v_1'+\cdots +\ZZ v_n'
\end{gather}
then there are some integers $m^i_{j}$ and $n^i_j$ such that
$v_j'=\sum_i m^i_jv_i$ and $v_j=\sum_i n^i_jv'_i$ for $1\leq j\leq
n$, and hence the matrices $(m^i_j)$ and $(n^i_j)$ are mutually
inverse. In other words, if two oriented bases determine the same
lattice then they are related, via left-multiplication, by an
element of $\SL_n(\ZZ)$. Conversely, if $A\in\SL_n(\ZZ)$ and
$(v_i)\in \mc{B}^+$, then we have the equality
(\ref{eqn:arith:latts|idlatts}) once we set $(v_i')=(A\cdot v_i)$.

We have shown that the set $\mc{L}$ is naturally isomorphic to the
orbit space $\SL_n(\ZZ)\backslash\mc{B}^+$, where $\mc{B}^+$ is
naturally a torsor for $\GL_n^+(\kk)$. Given a choice of oriented
basis ${\bf v}\in \mc{B}^+$, we obtain an identification of
$\mc{B}^+$ with $\GL_n^+(\kk)$
\begin{gather}
     \begin{split}
          \GL_n^+(\kk)&\leftrightarrow \mc{B}^+\\
          A&\leftrightarrow A\cdot {\bf v}
     \end{split}
\end{gather}
and hence, an identification of $\mc{L}$ with
$\SL_n(\ZZ)\backslash\GL_n^+(\kk)$.

From now on we will identify $\mc{L}$ with the orbit space
$\SL_n(\ZZ)\backslash\mc{B}^+$

\subsection{Projective lattices}\label{sec:arith:proj}

Let $V$, $\mc{B}^+$, and $\mc{L}$ be as is \S\ref{sec:arith:latts}.
For simplicity of exposition, let us assume that $n=\dim V$ is even
--- the case that $n$ is odd requires one to occasionally replace
$\kkm$ with $\kkp$.

There is a natural embedding (of groups) $\kkm\hookrightarrow
\GL_n^+(\kk)$. The image of $\kkm$ under this map is central
(indeed, this $\kkm$ is the center of $\GL_n^+(\kk)$), so there is a
natural action of $\kkm$ on the space of lattices,
$\mc{L}=\SL_n(\ZZ)\backslash\mc{B}^+$.
\begin{gather}
     \alpha\cdot\SL_n(\ZZ)(v_1,\ldots,v_n):=\SL_n(\ZZ)(\alpha
     v_1,\ldots,\alpha v_n)
\end{gather}
We wish to consider the quotient space
$P\mc{L}=\kkm\backslash\mc{L}$. That is, we would like to regard two
lattices as equivalent if one is a non-zero scalar multiple of the
other; we call the elements of $P\mc{L}$ the {\em projective
lattices} in $V$.

If $X$ is a set equipped with an action of $\kkm$, we write $PX$ for
the orbit space $\kkm\backslash X$. We write $x\mapsto [x]$ for the
natural map $X\to PX$.

Just as $\mc{L}$ is naturally identified with the orbit space
$\SL_n(\ZZ)\backslash\mc{B}^+$. The set $P\mc{L}$ is naturally
identified with the orbit space $\PSL_n(\ZZ)\backslash P\mc{B}^+$,
where
\begin{gather}
     \PSL_n(\ZZ)=\SL_n(\ZZ)/(\kkm\cap\ZZ),\\
     P\mc{B}^+=\kkm\backslash\mc{B}^+.
\end{gather}
We call $P\mc{B}^+$ the set of {\em projective oriented bases} for
$V$. It is naturally a torsor for $\PGL_n^+(\kk)=\GL_n^+(\kk)/\kkm$,
so that after choosing an element $[{\bf v}]\in P\mc{B}^+$ we obtain
an identification of $\PGL_n^+(\kk)$ with $P\mc{B}^+$ by setting
\begin{gather}
     \begin{split}
          \PGL_n^+(\kk)&\leftrightarrow P\mc{B}^+\\
          [A]&\leftrightarrow [A]\cdot [{\bf v}]:=[A{\bf v}],
     \end{split}
\end{gather}
and hence, also, an identification of
$\PSL_n(\ZZ)\backslash\PGL_n^+(\kk)$ with $P\mc{L}$.

\subsection{Commensurable lattices}\label{sec:arith:commens}

Let $V$, $\mc{B}^+$, and $\mc{L}$ be as in \S\ref{sec:arith:latts}.
Let us chose a lattice $L_1\in \mc{L}$, and let ${\bf
v}_1\in\mc{B}^+$ satisfy $L_1=\SL_n(\ZZ){\bf v}_1$. We say that a
lattice $L$ in $V$ is {\em commensurable} with $L_1$ if the
intersection $L\cap L_1$ has finite index both in $L$ and in $L_1$.
Let $V_{1}$ be the $\QQ$-linear span of $L_1$ in $V$, so that $V_1$
is a vector space of dimension $n$ over $\QQ$ that is contained in
$V$, and the $\kk$-linear span of $V_1$ is $V$.

The following result is straightforward.
\begin{prop}\label{prop:arith:commens|commenslatts}
The lattices in $V$ that are commensurable with $L_1$ are exactly
the additive subgroups of $V_1$ that are equivalent to $\ZZ^n$ as
$\ZZ$-modules.
\end{prop}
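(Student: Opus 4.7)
The plan is to prove the two inclusions separately: first, that every lattice $L\in\mc{L}$ commensurable with $L_1$ is contained in $V_1$; second, that every rank-$n$ free $\ZZ$-submodule of $V_1$ is indeed a lattice in $V$ (in the sense of \S\ref{sec:arith:latts}) and is commensurable with $L_1$.

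For the forward direction, suppose $L\in\mc{L}$ is commensurable with $L_1$, so that $L\cap L_1$ has finite index in $L$. Then for every $\ell\in L$ there is a positive integer $m$ with $m\ell\in L\cap L_1\subset L_1\subset V_1$. Since $V_1$ is a $\QQ$-vector space, $m\ell\in V_1$ implies $\ell\in V_1$; hence $L\subset V_1$. This $L$ is free of rank $n$ as a $\ZZ$-module by the definition of a lattice, so it has the required form.

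For the reverse direction, let $L\subset V_1$ be an additive subgroup isomorphic to $\ZZ^n$ as a $\ZZ$-module, and fix a $\ZZ$-basis $v_1,\ldots,v_n$ for $L$. These $v_i$ must be $\QQ$-linearly independent in $V_1$: any nontrivial $\QQ$-relation could be cleared of denominators to yield a nontrivial $\ZZ$-relation, contradicting the freeness of $L$. Since $V_1$ has $\QQ$-dimension $n$, the $v_i$ span $V_1$ over $\QQ$; since $V_1$ spans $V$ over $\kk$, the $v_i$ span $V$ over $\kk$ as well. Thus $L$ is a lattice in $V$. It remains to show $L$ and $L_1$ are commensurable. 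Both sit inside the $n$-dimensional $\QQ$-vector space $V_1$, and the sum $L+L_1$ is finitely generated (by bases of $L$ and $L_1$), torsion-free, and $\QQ$-spans a subspace of $V_1$, so it is a free $\ZZ$-module of rank exactly $n$. Consequently $L$ and $L_1$ both have finite index in $L+L_1$, and by the second isomorphism theorem $L\cap L_1$ has finite index in each of $L$ and $L_1$.

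The argument is essentially routine; the only point requiring any care is the observation that a torsion-free finitely generated subgroup of a finite-dimensional $\QQ$-vector space has $\ZZ$-rank equal to the $\QQ$-dimension of its span, which I would record as a one-line lemma if it were not standard. Given this, no step presents a genuine obstacle.
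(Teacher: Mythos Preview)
Your proof is correct. The paper does not actually supply a proof of this proposition: it merely introduces it with ``The following result is straightforward'' and moves on, so there is no argument to compare against. Your two-direction argument is exactly the sort of routine verification the author had in mind, and each step (clearing denominators to land in $V_1$, checking that a $\ZZ$-basis of a rank-$n$ subgroup of $V_1$ is $\QQ$-independent, bounding the rank of $L+L_1$, and applying the second isomorphism theorem) is sound.
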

The rational vector space $V_1$, determined by $L_1$, inherits an
orientation from $V$ (in the sense of \S\ref{sec:arith:orient})
which we denote by $W_1$. We write $\mc{B}^+_1$ for the set of
oriented bases for $V_1$ (c.f. \S\ref{sec:arith:latts}), and we
write $\mc{L}_1$ for the orbit space $\SL_n(\ZZ)\backslash
\mc{B}^+_1$, which is naturally identified with the set of lattices
in $V_1$ --- in other words, by Proposition
\ref{prop:arith:commens|commenslatts}, $\mc{L}_1$ is the subset of
$\mc{L}$ consisting of the lattices in $V$ that are commensurable
with $L_1$. Our choice ${\bf v}_1\in\mc{B}^+_1$ allows us to
construct an identification of $\mc{L}_1$ with
$\SL_n(\ZZ)\backslash\GL_n^+(\QQ)$ (c.f. \S\ref{sec:arith:latts}).

We will require to distinguish the lattices in $V_1$ only up to
their orbits under $\QQm$. Similar to \S\ref{sec:arith:proj}, we
write $P\mc{L}_1$ for the orbit space $\QQm\backslash \mc{L}_1$.
Then $P\mc{L}_1$ is naturally identified with $\PSL_n(\ZZ)\backslash
P\mc{B}^+_1$ for $P\mc{B}^+_1=\QQm\backslash \mc{B}^+_1$. The choice
${\bf v}_1\in\mc{B}^+_1$ allows us to identify $P\mc{B}^+_1$ with
$\PSL_n(\ZZ)\backslash \PGL_n^+(\QQ)$ (c.f. \S\ref{sec:arith:proj}).

We will continue to write $x\mapsto [x]$ for the projection maps
$X\to PX$. It should be clear from the context whether we are taking
orbits with respect to actions by $\QQm$ or $\kkm$.

\subsection{Hyperdistance}\label{sec:arith:hdist}

We continue in the setting of \S\ref{sec:arith:commens}. In
particular, we continue to assume that $n=\dim V$ is even.

There is a canonically defined integer valued function on
$M_n(\QQ)$, which we call the {\em rational projective determinant},
and denote $\Pdet$, which is defined as follows. If $A=(a^i_j)\in
M_n(\QQ)$ is non-zero, then there is a smallest positive rational
number $\alpha_A\in\QQp$ such that $\alpha_A a^i_j\in\ZZ$ for all
$1\leq i,j\leq n$; explicitly, if we write each $a^i_j$ as a
quotient $a^i_j=b^i_j/c^i_j$ for some $b^i_j,c^i_j\in\ZZ$, then
$\alpha_A= \,{\rm gcd}\{c^i_j\}/{\rm lcm}\{b^i_j\}$. We set
\begin{gather}
     \Pdet(A):=
          \begin{cases}
               \det(\alpha_AA)=\alpha_A^n\det(A),\;&\text{if $A\neq
               0$;}\\
               0,\;&\text{if $A=0$.}
          \end{cases}
\end{gather}
Evidently, $\Pdet(\alpha A)=\Pdet (A)$ for any $\alpha\in \QQm$ (so
long as $n$ is even), so that the projective determinant induces a
well defined function,
\begin{gather}
     \Pdet:PM_n(\QQ)\to \ZZ,
\end{gather}
which we also call the {\em rational projective determinant}, where
$PM_n(\QQ)=\QQm\backslash M_n(\QQ)$ is the monoid of {\em rational
projective matrices}.
\begin{lem}\label{lem:arith:hdist|slninv}
If $A\in\SL_n(\ZZ)$ then $\Pdet(AX)=\Pdet(X)=\Pdet(XA)$ for any
$X\in M_n(\QQ)$.
\end{lem}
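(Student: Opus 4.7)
The plan is to show that the normalizing scalar $\alpha_X$ is invariant under left or right multiplication by an element of $\SL_n(\ZZ)$, and then use multiplicativity of $\det$ together with $\det(A)=1$ to finish. The case $X=0$ is trivial since then $AX=XA=0$ and $\Pdet$ vanishes on all of these, so I would immediately reduce to $X\neq 0$.

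For the key step I would argue as follows. Since $A\in\SL_n(\ZZ)\subset M_n(\ZZ)$ and $\alpha_X X\in M_n(\ZZ)$, the product $A(\alpha_X X)=\alpha_X(AX)$ lies in $M_n(\ZZ)$, so $\alpha_X$ is a valid scaling factor for $AX$; by minimality, $\alpha_{AX}\leq \alpha_X$. The symmetric inequality requires $A^{-1}$ also to have integer entries, which is precisely where $A\in\SL_n(\ZZ)$ is used: from $\det(A)=1$ and the adjugate formula $A^{-1}=\adj(A)$, we have $A^{-1}\in M_n(\ZZ)$, hence $\alpha_{AX}(X)=A^{-1}(\alpha_{AX}AX)\in M_n(\ZZ)$, giving $\alpha_X\leq \alpha_{AX}$. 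Therefore $\alpha_{AX}=\alpha_X$. The same reasoning, applied on the right, yields $\alpha_{XA}=\alpha_X$.

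Having pinned down the normalizers, the rest is bookkeeping. Using the defining formula
\begin{equation*}
\Pdet(Y)=\alpha_Y^n\det(Y)
\end{equation*}
together with multiplicativity of the usual determinant, I would compute
\begin{equation*}
\Pdet(AX)=\alpha_{AX}^n\det(AX)=\alpha_X^n\det(A)\det(X)=\alpha_X^n\det(X)=\Pdet(X),
\end{equation*}
and analogously for $XA$. This uses $\det(A)=1$ in an essential way.

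There is really no single hard step here; the only substantive input is the observation that $\SL_n(\ZZ)$ is closed under inversion \emph{within integer matrices}, which makes the two inequalities bounding $\alpha_{AX}$ symmetric. Had we allowed $A\in\GL_n(\ZZ)$ rather than $\SL_n(\ZZ)$, the argument for $\alpha_{AX}=\alpha_X$ would still succeed (since $\det(A)=\pm 1$ already forces integer inverses), but the final identification $\Pdet(AX)=\Pdet(X)$ would need the extra observation $\det(A)^n=1$, which here is automatic since $\det(A)=1$.
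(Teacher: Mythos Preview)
Your proof is correct and follows essentially the same approach as the paper: both arguments reduce to showing $\alpha_{AX}=\alpha_X$ by exploiting that $A$ and $A^{-1}$ lie in $M_n(\ZZ)$, then conclude via $\det(A)=1$. The only cosmetic difference is that the paper phrases this as an equality of the two sets $\{\alpha\in\QQp\mid \alpha AX\in M_n(\ZZ)\}$ and $\{\alpha\in\QQp\mid \alpha X\in M_n(\ZZ)\}$, whereas you derive the matching pair of inequalities $\alpha_{AX}\leq\alpha_X$ and $\alpha_X\leq\alpha_{AX}$ directly.
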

\begin{proof}
We have $\Pdet(AX)=\alpha^n_{AX}\det(AX)$ by definition, and
$\det(AX)=\det(X)$ for $A\in \SL_n(\ZZ)$, so it suffices to show
that $\alpha_{AX}=\alpha_X$ for $A\in \SL_n(\ZZ)$. Since both $A$
and its inverse have integer entries, $\alpha AX$ belongs to
$M_n(\ZZ)$ if and only if $\alpha X$ does, for any $\alpha\in \QQp$.
Thus the sets $\{\alpha\in \QQp\mid \alpha AX\in M_n(\ZZ)\}$ and
$\{\alpha\in\QQp\mid \alpha X\in M_n(\ZZ)\}$ coincide, and thus
their minimal elements coincide.
\end{proof}
We will be most interested in the restriction of $\Pdet$ to the
group $\PGL_n^+(\QQ)$, where it takes only positive integer values.
The following result is an immediate consequence of Lemma
\ref{lem:arith:hdist|slninv}.
\begin{prop}\label{prop:arith:hdist|pdetinv}
The projective determinant induces a well defined positive integer
valued function on the orbit space
$\PSL_n(\ZZ)\backslash\PGL_n^+(\QQ)$.
\begin{gather}
     \Pdet:\PSL_n(\ZZ)\backslash\PGL_n^+(\QQ)\to \ZZp
\end{gather}
This function is invariant under the natural right action of
$\PSL_n(\ZZ)$.
\end{prop}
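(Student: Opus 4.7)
The plan is to decompose the statement into three verifications: first, that $\Pdet$ takes strictly positive integer values on $\PGL_n^+(\QQ)$; second, that it descends to the left coset space $\PSL_n(\ZZ)\backslash\PGL_n^+(\QQ)$; and third, that the descended function is invariant under the right translation action of $\PSL_n(\ZZ)$. The whole argument amounts to combining the definition of $\Pdet$ with the previously established Lemma~\ref{lem:arith:hdist|slninv}.

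For positivity and integrality, I would start with an arbitrary representative $A\in\GL_n^+(\QQ)$ of a class in $\PGL_n^+(\QQ)$. By the defining minimality of $\alpha_A$, the rescaled matrix $\alpha_A A$ lies in $M_n(\ZZ)$, so $\Pdet(A)=\det(\alpha_A A)=\alpha_A^n\det(A)$ is an integer. Since $\alpha_A\in\QQp$ and $\det(A)\in\QQp$ by the very definition of $\GL_n^+$, we obtain $\Pdet(A)\in\ZZp$. Because $n$ is assumed even, $\Pdet$ is already well defined on the monoid $PM_n(\QQ)=\QQm\backslash M_n(\QQ)$, so in particular the value $\Pdet(A)$ depends only on the class $[A]\in\PGL_n^+(\QQ)=\GL_n^+(\QQ)/\QQm$.

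For well-definedness on the left coset space, I would lift an arbitrary element of $\PSL_n(\ZZ)\backslash\PGL_n^+(\QQ)$ to a pair $(A,X)$ with $A\in\SL_n(\ZZ)$ and $X\in\GL_n^+(\QQ)$, and invoke the first half of Lemma~\ref{lem:arith:hdist|slninv} to conclude $\Pdet(AX)=\Pdet(X)$. Passing to classes modulo $\QQm$ yields $\Pdet([AX])=\Pdet([X])$, so $\Pdet$ is constant on left $\PSL_n(\ZZ)$-orbits in $\PGL_n^+(\QQ)$. The right-invariance is the mirror image argument using the other half of Lemma~\ref{lem:arith:hdist|slninv}, namely $\Pdet(XA)=\Pdet(X)$.

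There is no serious obstacle here; the main bookkeeping is keeping straight the two different scaling groups at play, namely the $\QQm$ used to form $\PGL_n^+(\QQ)$ and the central subgroup of $\SL_n(\ZZ)$ used to form $\PSL_n(\ZZ)$, and checking that each representative-level identity passes cleanly through the appropriate projection. Once the integrality step is recorded, the two invariance claims follow by simply reading off the corresponding halves of Lemma~\ref{lem:arith:hdist|slninv} at the level of lifts and projecting.
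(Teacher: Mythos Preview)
Your proposal is correct and follows exactly the route the paper intends: the paper states that Proposition~\ref{prop:arith:hdist|pdetinv} is ``an immediate consequence of Lemma~\ref{lem:arith:hdist|slninv},'' and your three-step decomposition (positivity/integrality from the definition of $\Pdet$, left-coset well-definedness from $\Pdet(AX)=\Pdet(X)$, right-invariance from $\Pdet(XA)=\Pdet(X)$) simply unpacks that immediacy. No alternative argument is involved.
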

In particular then, the projective determinant furnishes us with a
natural method for comparing projective lattices in $V_1$. For
suppose given a pair of projective lattices $L,L'\in P\mc{L}_1$. We
regard $P\mc{L}_1$ as identified with $\PSL_n(\ZZ)\backslash
P\mc{B}^+_1$, so that we have
\begin{gather}\label{eqn:arith:hdist|Lsetup}
     L=\PSL_n(\ZZ)\cdot [{\bf v}],\quad
     L'=\PSL_n(\ZZ)\cdot [{\bf v}'],
\end{gather}
for some $[{\bf v}],[{\bf v}']\in P\mc{B}^+_1$. The set
$P\mc{B}^+_1$ is a $\PGL_n^+(\QQ)$-torsor, so there is a unique
$g\in\PGL_n^+(\QQ)$ such that $g\cdot [{\bf v}]=[{\bf v}']$. We
define a positive integer $\delta(L,L')$ by setting
\begin{gather}\label{eqn:arith:hdist|distdef}
     \delta(L,L')=\Pdet(g).
\end{gather}
\begin{prop}
The function $\delta: P\mc{L}\times P\mc{L}\to\ZZp$ given by
(\ref{eqn:arith:hdist|distdef}) is well-defined.
\end{prop}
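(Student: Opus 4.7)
The plan is to unpack what ``well-defined'' must mean here: the recipe picks representatives $[{\bf v}], [{\bf v}'] \in P\mc{B}^+_1$ of the $\PSL_n(\ZZ)$-orbits $L, L'$, then takes the unique $g \in \PGL_n^+(\QQ)$ with $g\cdot[{\bf v}] = [{\bf v}']$, and finally takes $\Pdet(g)$. The only freedom is the choice of representatives; so I must show that replacing $[{\bf v}]$ by $A\cdot[{\bf v}]$ and $[{\bf v}']$ by $A'\cdot[{\bf v}']$, for any $A, A' \in \PSL_n(\ZZ)$, does not change $\Pdet(g)$.

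First I would write down the new comparison element explicitly. If $g_1 \in \PGL_n^+(\QQ)$ is the unique element sending $A\cdot[{\bf v}]$ to $A'\cdot[{\bf v}']$, then using that $P\mc{B}^+_1$ is a $\PGL_n^+(\QQ)$-torsor together with the original identity $g\cdot[{\bf v}] = [{\bf v}']$, a direct manipulation forces
\begin{equation*}
     g_1 = A'\,g\,A^{-1}
\end{equation*}
as elements of $\PGL_n^+(\QQ)$. (One applies both sides to $[{\bf v}]$ and invokes uniqueness in the torsor.)

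Next I would invoke Proposition \ref{prop:arith:hdist|pdetinv}: the projective determinant on $\PGL_n^+(\QQ)$ is invariant under both left and right multiplication by $\PSL_n(\ZZ)$ (the proposition states right invariance, but Lemma \ref{lem:arith:hdist|slninv} provides both sides at the level of $M_n(\QQ)$, and this passes to the projective quotient). Hence
\begin{equation*}
     \Pdet(g_1) = \Pdet(A'\,g\,A^{-1}) = \Pdet(g),
\end{equation*}
which gives the claim.

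No real obstacle is expected: the statement is essentially the content of Proposition \ref{prop:arith:hdist|pdetinv} packaged in the language of the torsor action on $P\mc{B}^+_1$. The only care required is in bookkeeping the passage between the $\SL_n(\ZZ)$-action on $\mc{B}^+_1$ used to define $\mc{L}_1$ and the induced $\PSL_n(\ZZ)$-action on $P\mc{B}^+_1$ used to define $P\mc{L}_1$, so that the element $A \in \PSL_n(\ZZ)$ can be lifted to $\SL_n(\ZZ)$ for the application of Lemma \ref{lem:arith:hdist|slninv}. Since $\Pdet$ is $\kkm$-invariant by construction (for even $n$), this lift is harmless, and the proof reduces to the one-line computation above.
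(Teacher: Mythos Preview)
Your proposal is correct and follows essentially the same argument as the paper: both write the alternative comparison element as $A'gA^{-1}$ (the paper uses $h'gh^{-1}$) and then invoke the bi-invariance of $\Pdet$ under $\PSL_n(\ZZ)$ from Lemma~\ref{lem:arith:hdist|slninv}/Proposition~\ref{prop:arith:hdist|pdetinv}. Your extra remark about lifting from $\PSL_n(\ZZ)$ to $\SL_n(\ZZ)$ is a small clarification the paper leaves implicit.
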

\begin{proof}
We should check that the value of $\delta(L,L')$ does not depend
upon the choice of representatives (viz. $[{\bf v}]$ and $[{\bf
v}']$), for the orbits $L$ and $L'$. So let $L,L'\in P\mc{L}_1$ be
as in (\ref{eqn:arith:hdist|Lsetup}), with $g\in \PGL_n^+(\QQ)$
satisfying $g\cdot [{\bf v}]=[{\bf v}']$, and suppose that
$L=\PSL_n(\ZZ)\cdot[{\bf w}]$ and $L'=\PSL_n(\ZZ)\cdot [{\bf w}']$.
Then there are some $h,h'\in\PSL_n(\ZZ)$ such that $[{\bf w}]=h\cdot
[{\bf v}]$ and $[{\bf w}']=h'\cdot [{\bf v}']$, and $h'gh^{-1}$ is
the unique element of $\PGL_n^+(\QQ)$ satisfying $(h'gh^{-1})\cdot
[{\bf w}]=[{\bf w}']$. We require to show that
$\Pdet(g)=\Pdet(h'gh^{-1})$, but this is guaranteed by Proposition
\ref{prop:arith:hdist|pdetinv}.
\end{proof}
In the case that $n=2$ the function $\delta$ acquires a special
property.
\begin{prop}\label{prop:arith:hdist|symm}
For $n=2$, the function $\delta: P\mc{L}\times P\mc{L}\to \ZZp$ is
symmetric.
\end{prop}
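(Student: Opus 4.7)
The plan is to show that $\Pdet(g) = \Pdet(g^{-1})$ for any $g \in \PGL_2^+(\QQ)$, since writing $L = \PSL_2(\ZZ)\cdot[{\bf v}]$ and $L' = \PSL_2(\ZZ)\cdot[{\bf v}']$ with $g\cdot[{\bf v}] = [{\bf v}']$ gives $\delta(L,L') = \Pdet(g)$, whereas $g^{-1}\cdot[{\bf v}'] = [{\bf v}]$ gives $\delta(L',L) = \Pdet(g^{-1})$.

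The idea is to work with a preferred integral representative. First I would choose a representative $M \in M_2(\ZZ)$ of $g$ whose entries have greatest common divisor $1$ and with $\det M > 0$; such an $M$ exists since any $g \in \PGL_2^+(\QQ)$ has a representative $\alpha_A A$ for $A$ a matrix representative and $\alpha_A$ the minimal positive scalar clearing denominators, and by minimality the resulting integer entries are coprime. By the very definition of $\Pdet$ one then has $\Pdet(g) = \det M$.

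Next I would produce an analogous representative for $g^{-1}$ using the adjugate. For a $2 \times 2$ matrix,
\begin{gather*}
     \adj\!\begin{pmatrix} a & b \\ c & d \end{pmatrix} = \begin{pmatrix} d & -b \\ -c & a \end{pmatrix},
\end{gather*}
so the entries of $\adj M$ are simply a signed permutation of those of $M$; in particular $\gcd$ of entries of $\adj M$ equals $\gcd$ of entries of $M$, which is $1$. The identity $M \cdot \adj M = (\det M)\, I$ shows $M^{-1} = (\det M)^{-1} \adj M$, so $\adj M$ and $M^{-1}$ differ by the scalar $\det M \in \QQm$ and represent the same element of $\PGL_2^+(\QQ)$, namely $g^{-1}$. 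Hence $\adj M$ is an integer representative of $g^{-1}$ with coprime entries, yielding $\Pdet(g^{-1}) = \det(\adj M) = \det M = \Pdet(g)$.

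There is not really a hard step here once the right representative is found; the only genuinely delicate point is the reliance on $n=2$. For general $n$ the entries of $\adj M$ are $(n-1) \times (n-1)$ minors which need not be coprime even when the entries of $M$ are, so the identification of $\adj M$ as the distinguished representative of $g^{-1}$ breaks down, and indeed one finds $\det(\adj M) = (\det M)^{n-1}$, consistent with symmetry failing in higher rank.
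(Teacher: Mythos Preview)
Your proof is correct and follows essentially the same approach as the paper's: both arguments reduce to showing $\Pdet(g)=\Pdet(g^{-1})$ by exploiting that, for $n=2$, the adjugate $\det(M)M^{-1}$ of a primitive integer matrix $M$ is again a primitive integer matrix. The only difference is presentational---you observe directly that the entries of $\operatorname{adj}M$ are a signed permutation of those of $M$, while the paper phrases the same fact as the equivalence ``$X\in M_2(\ZZ)\Leftrightarrow \det(X)X^{-1}\in M_2(\ZZ)$'' obtained by applying the adjugate twice---but the content is identical.
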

\begin{proof}
If $X\in M_n(\ZZ)$ and $X$ is invertible, then $\det(X)X^{-1}\in
M_n(\ZZ)$. Applying this same rule with $\det(X)X^{-1}$ in place of
$X$, we see that
\begin{gather}
          \frac{\det(X)^{n}}{\det(X)}\frac{X}{\det(X)}\in
          M_n(\ZZ)
\end{gather}
Thus, in the special case that $n=2$, we have that an invertible
matrix $X$ lies in $M_n(\ZZ)$ if and only if $\det(X)X^{-1}$ does.
Consequently, for any $A\in\GL_2^+(\QQ)$ and any $\alpha\in\QQp$ we
have that $\alpha A$ belongs to $M_2(\ZZ)$ if and only if
$\alpha\det(A)A^{-1}$ does, so that if $\alpha_A$ is the minimal
positive rational for which $\alpha_AA$ belongs to $M_2(\ZZ)$, then
$\alpha_{A^{-1}}=\det(A)\alpha_A$ is the minimal positive rational
that has this property for $A^{-1}$. It is easy to compute now that
$\Pdet(A)=\alpha_A^2\det(A)$ and
$\Pdet(A^{-1})=\alpha_{A^{-1}}^2\det(A^{-1})$ coincide. This
verifies the claim, since if $\delta(L,L')=\Pdet(g)$ then
$\delta(L',L)=\Pdet(g^{-1})$.
\end{proof}
\begin{rmk}
It is easy to find $g\in\PGL_n^+(\QQ)$ such that $\Pdet(g)\neq
\Pdet(g^{-1})$ when $n>2$.
\end{rmk}
Following Conway \cite{Con_UndrstndgGamma0N}, we call the function
$\delta$, when defined on pairs of projective lattices in a rational
vector space of dimension $2$, {\em hyperdistance}. The logarithm of
$\delta$ is in fact a metric on $P\mc{L}$ for $n=2$.

From now on we will restrict attention to the case that $n=\dim
V=2$.

\subsection{Names}\label{sec:arith:names}

We continue in the setting of \S\ref{sec:arith:hdist}. In
particular, we assume that $n=\dim V=2$, and we retain our choice of
lattice $L_1\in \mc{L}$, and generating set ${\bf
v_1}\in\mc{B}^+_1$. We regard the set $P\mc{L}_1$, of projective
lattices in $V_1$, as identified with the coset space
$\PSL_2(\ZZ)\backslash\PGL_2^+(\QQ)$ via this choice.
\begin{gather}\label{eqn:arith:names|ident}
     \begin{split}
     \PSL_2(\ZZ)\backslash\PGL_2^+(\QQ)&\leftrightarrow
          P\mc{L}_1\\
     \PSL_2(\ZZ)[A]&\leftrightarrow \PSL_2(\ZZ)\cdot [A{\bf v}_1]
     \end{split}
\end{gather}
Of course, $\PGL_2^+(\QQ)$ acts naturally, from the right, on
$\PSL_2(\ZZ)\backslash\PGL_2^+(\QQ)$, so the identification
(\ref{eqn:arith:names|ident}) also allows us to define a right
action of $\PGL_2^+(\QQ)$ on $P\mc{L}_1$. Evidently, hyperdistance
(c.f. \S\ref{sec:arith:hdist}) is invariant with respect to this
$\PGL_2^+(\QQ)$-action.

We will now introduce {\em names} for the elements of the coset
space
\begin{gather}\label{eqn:arith:names|PZbkslshPQ}
     \PSL_2(\ZZ)\backslash\PGL_2^+(\QQ),
\end{gather}
and hence also for the projective lattices in $V_1$. Consider the
natural map from $\GL_2^+(\QQ)$ to
$\PSL_2(\ZZ)\backslash\PGL_2^+(\QQ)$. Our strategy (following
\cite{Con_UndrstndgGamma0N}) is to try and define a canonical
preimage in $\GL_2^+(\QQ)$ for each coset $\PSL_2(\ZZ)g$ in the
image of this map.

Let $g\in\PGL_2^+(\QQ)$. We will write
\begin{gather}
     \left(
       \begin{array}{cc}
         a & b \\
         c & d \\
       \end{array}
     \right)\mapsto
     \left[
       \begin{array}{cc}
         a & b \\
         c & d \\
       \end{array}
     \right]
\end{gather}
for the canonical map $M_2(\QQ)\to PM_2(\QQ)$. Then
\begin{gather}
     g=
     \left[
       \begin{array}{cc}
         a & b \\
         c & d \\
       \end{array}
     \right]
\end{gather}
for some $a,b,c,d\in\QQ$ such that $ad-bc>0$. Choose integers
$s,t\in\ZZ$ such that $sa+tc=0$, and observe that then $sb+td\neq
0$, by linear independence of the columns of an invertible matrix.
We may assume that $s,t$ have no common factors, and thus there are
integers $m,n\in\ZZ$ such that $mt-sn=1$. In other words, there is
an $h\in\PSL_2(\ZZ)$ such that
\begin{gather}
     h\cdot g=g'=
     \left[
       \begin{array}{cc}
         a' & b' \\
         0 & d' \\
       \end{array}
     \right]
\end{gather}
for some $a',b',d'\in\QQ$, with $d'\neq 0$. We have
\begin{gather}
     \left[
       \begin{array}{cc}
         a' & b' \\
         0 & d' \\
       \end{array}
     \right]
     =
     \left[
       \begin{array}{cc}
         a'' & b'' \\
         0 & 1 \\
       \end{array}
     \right]
\end{gather}
for $a''=a'/d'>0$ and $b''=b'/d'$. There is a unique integer, $N$
say, such that $0\leq b''+N<1$. Left-multiplying by
\begin{gather}
     \left[
       \begin{array}{cc}
         1 & N \\
         0 & 1 \\
       \end{array}
     \right],
\end{gather}
we arrive at an element $g'''$ of $\PSL_2(\ZZ)g$ of the form
\begin{gather}
     g'''=
     \left[
       \begin{array}{cc}
         a''' & b''' \\
         0 & 1 \\
       \end{array}
     \right]
\end{gather}
where $a'''>0$ and $0\leq b'''<1$. Let us write $\mc{M}$ for the set
of matrices in $M_2(\QQ)$ of the form
\begin{gather}
     \left(
       \begin{array}{cc}
         M & b \\
         0 & 1 \\
       \end{array}
     \right)
\end{gather}
where $M>0$ and $0\leq b<1$. We have established the following
result.
\begin{prop}
The assignment
\begin{gather}
     \left(
       \begin{array}{cc}
         M & b \\
         0 & 1 \\
       \end{array}
     \right)
     \mapsto
     \PSL_2(\ZZ)
     \left[
       \begin{array}{cc}
         M & b \\
         0 & 1 \\
       \end{array}
     \right]
\end{gather}
defines a bijective correspondence between the elements of $\mc{M}$
and the elements of the coset space
$\PSL_2(\ZZ)\backslash\PGL_2^+(\QQ)$.
\end{prop}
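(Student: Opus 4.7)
The preceding paragraphs have already produced, for each coset in $\PSL_2(\ZZ)\backslash\PGL_2^+(\QQ)$, a representative of the prescribed form, so the assignment is surjective. The proposition therefore reduces to injectivity, and my plan is to unwind the definition and then pin down the ambiguity by comparing matrix entries.

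Suppose $X=\left(\begin{smallmatrix} M & b \\ 0 & 1 \end{smallmatrix}\right)$ and $Y=\left(\begin{smallmatrix} M' & b' \\ 0 & 1 \end{smallmatrix}\right)$ both lie in $\mc{M}$ and yield the same coset. Unwinding, this means there exist a lift $h\in\SL_2(\ZZ)$ of some class in $\PSL_2(\ZZ)$ and a scalar $\alpha\in\QQm$ with $hX=\alpha Y$. Since replacing $h$ by $-h$ yields the same class in $\PSL_2(\ZZ)$ while flipping the sign of $\alpha$, I am free to fix the signs of the diagonal entries of $h$ at my convenience.

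I would then argue as follows. Examining the $(2,1)$-entry of $hX=\alpha Y$ gives $rM=0$, where $r$ is the $(2,1)$-entry of $h$; since $M>0$ this forces $r=0$, so $h$ is upper triangular and $\det h = ps = 1$ with $p,s\in\ZZ$ forces $p=s=\pm 1$. Using the sign freedom above, I take $p=s=1$. The $(2,2)$-entry then reads $1=\alpha$, the $(1,1)$-entry gives $M=M'$, and the $(1,2)$-entry gives $b+q=b'$ with $q\in\ZZ$ the $(1,2)$-entry of $h$. Combined with the constraint $0\le b,b'<1$, the condition $q\in\ZZ$ forces $q=0$ and hence $b=b'$; thus $X=Y$ as required.

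This is essentially a bookkeeping computation and I do not anticipate any genuine obstacle. The only mild subtlety is coordinating the passage to $\SL_2(\ZZ)$-lifts on the left with the projective scalar ambiguity on the right, which is handled by the sign remark made before the entry-by-entry comparison.
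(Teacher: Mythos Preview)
Your proposal is correct and follows the same route as the paper: the paper's proof consists entirely of the reduction procedure in the paragraphs preceding the proposition, which establishes surjectivity, and the proposition is then simply declared. Your explicit injectivity check via the entry-by-entry comparison is a clean supplement that the paper leaves to the reader; the computation is sound, including the handling of the sign ambiguity when lifting from $\PSL_2(\ZZ)$ to $\SL_2(\ZZ)$.
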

Given $M\in \QQp$ and $b\in \QQ\cap[0,1)$, let us write $g_{M,b}$
for the projective matrix
\begin{gather}\label{eqn:arith:names|gs}
     g_{M,b}:=
     \left[
       \begin{array}{cc}
         M & b \\
         0 & 1 \\
       \end{array}
     \right].
\end{gather}
Let us write $L_{M,b}$ for the corresponding projective lattice with
respect to the identification (\ref{eqn:arith:names|ident}); i.e. we
set
\begin{gather}
     L_{M,b}:=\PSL_2(\ZZ)g_{M,b}.
\end{gather}
We will typically abbreviate $g_{M,0}$ to $g_{M}$, and $L_{M,0}$ to
$L_{M}$. Observe that, under this notational convention, we have
\begin{gather}
     L_{1}=L_{1,0}=\PSL_2(\ZZ)g_{1}
          =\PSL_2(\ZZ)\leftrightarrow \PSL_2(\ZZ)[{\bf v}_1]
\end{gather}
in agreement with our notation $L_1$ for the lattice we picked in
\S\ref{sec:arith:commens}.

Observe that to compute the hyperdistance $\delta(L_{M,b},L_1)$,
from an arbitrary lattice $L_{M,b}$ to the distinguished lattice
$L_1$, is the same as computing the projective determinant of the
matrix $g_{M,b}$. More generally, computing the hyperdistance
between $L_{M',b'}$ and $L_{M,b}$ is the same as computing the
projective determinant of $g_{M',b'}g_{M,b}^{-1}$.
\begin{gather}
     (g_{M',b'}g_{M,b}^{-1})\cdot(g_{M,b}\cdot[{\bf v}_0])
          =g_{M',b'}\cdot[{\bf v}_0]\\
     \delta(L_{M',b'},L_{M,b})=\Pdet(g_{M',b'}g_{M,b}^{-1})
\end{gather}

Our names $L_{M,b}$ for the projective lattices in $V_1$ are not
particularly canonical. We could, for example, have chosen to seek
coset representatives for $\PSL_2(\ZZ)\backslash \PSL_2^+(\QQ)$ with
vanishing top-right entry, unital top-left entry, positive
bottom-right entry, and bottom-left entry in $\QQ\cap[0,1)$. That
is, we could have sought representatives in the form $\bar{g}_{b,M}$
where
\begin{gather}
     \bar{g}_{b,m}=\left[
     \begin{array}{cc}
      1 & 0 \\
      b & M \\
     \end{array}
     \right].
\end{gather}
As it turns out, the matrices $\bar{g}_{b,M}$, for
$b\in\QQ\cap[0,1)$ and $M\in\QQp$, also furnish a complete and
irredundant list of coset representatives for $\PSL_2(\ZZ)$ in
$\PSL_2(\QQ)$. We will write $\bar{L}_{b,M}$ for the projective
lattice corresponding to $\PSL_2(\ZZ)\bar{g}_{b,M}$. Of course,
every projective lattice $L_{M,b}$ can be written as
$\bar{L}_{b',M'}$ for some $b',M'$. The correspondence is such that
\begin{gather}
     L_{M,0}=\bar{L}_{0,1/M},\quad
     L_{M,f/g}=\bar{L}_{f'/g,1/g^2M},
\end{gather}
where, for $0< f<g$ and $\gcd\{f,g\}=1$, the integer $f'$ is
uniquely determined by the conditions that $0<f'<g$ and $ff'\equiv
1\pmod{g}$.

When $L_{M,b}=\bar{L}_{b',M'}$ we (following
\cite{Con_UndrstndgGamma0N}) call $\bar{L}_{b',M'}$ the {\em reverse
name} for $L_{M,b}$. The reverse names for projective lattices will
be useful in \S\ref{sec:arith:orbits}.

\subsection{Stabilizers}\label{sec:arith:stabs}

Let us continue with the notation and conventions of
\S\ref{sec:arith:names}. In particular, we consider the
$\QQm$-orbits of lattices in $V$ that are commensurable with our
distinguished lattice $L_1$ --- the set $P\mc{L}_1$, of projective
lattices in $V_1$
--- and these orbits are in natural correspondence with the
$\PSL_2(\ZZ)\backslash\PGL_2^+(\QQ)$. Each projective lattice $L\in
P\mc{L}_1$ arises as $L=L_{M,b}$ for some $M\in \QQp$ and
$b\in\QQ\cap[0,1)$.

From now on, let us write $G$ for the group $\PGL_2^+(\QQ)$ regarded
as a group with right-action on the space $P\mc{L}_1$ of projective
lattices in $V_1$.
\begin{gather}
     \begin{split}
     P\mc{L}_1&\leftrightarrow
          \PSL_2(\ZZ)\backslash\PGL_2^+(\QQ)
               \circlearrowleft
               \PGL_2^+(\QQ)=:G\\
          \PSL_2(\ZZ)\cdot(g\cdot[{\bf v_1}])&\leftrightarrow
               \PSL_2(\ZZ)g
     \end{split}
\end{gather}

For any $L\in P\mc{L}_1$, we may ask for the subgroup of $G$ that
fixes $L$; i.e. the group $\Fix_G(L)$. We will write $G_L$ for
$\Fix_G(L)$, and $G_{M,b}$ for $G_L$ if $L=L_{M,b}$. We may also
consider the group that fixes several lattices,
\begin{gather}
     G_{(L^1,\ldots,L^k)}=\bigcap_iG_{L^i}=
          \left\{g\in G\mid L^i\cdot
     g=L^i,\;1\leq i\leq k\right\},
\end{gather}
or the group that stabilizes a set of lattices,
\begin{gather}
     G_{\{L^1,\ldots,L^k\}}=\left\{g\in G\mid \{L^1\cdot
     g,\ldots,L^k\cdot g\}=\{L^1,\ldots, L^k\}\right\}.
\end{gather}
Let us compute some examples of the groups $G_{(L,\cdots)}$,
$G_{\{L,\cdots\}}$. Note that, by definition,
\begin{gather}\label{eqn:arith:stabs|ractn}
     (\PSL_2(\ZZ)\cdot[{\bf v}_1])\cdot g=
     \PSL_2(\ZZ)g\cdot [{\bf v}_1]
\end{gather}
for $g\in G$. The identity (\ref{eqn:arith:stabs|ractn}) makes it
clear that the group $G_1=\Fix_G(L_1)$ is none other than the
familiar {\em modular group}, $\PSL_2(\ZZ)$. Evidently, $G$ acts
transitively on $P\mc{L}_1$, so the lattice $L_M$, for $M\in\QQp$,
is fixed by a conjugate of $G_1\cong \PSL_2(\ZZ)$; viz.
\begin{gather}
     G_M=g_M^{-1}
     G_1
     g_M
     =\left\{
     \left[
       \begin{array}{cc}
         a & b/M \\
         cM & d \\
       \end{array}
     \right]
     \mid a,b,c,d\in\ZZ,\; ad-bc= 1
     \right\},
\end{gather}
Thus we see that the intersection $G_{(1,N)}=G_1\cap G_N$, for $N$ a
positive integer, is the {\em Hecke group of level $N$}, usually
denoted $\Gamma_0(N)$.

For $N$ a prime power, $N=p^a$ say, the group $G_{\{1,N\}}$ that
preserves the set $\{L_1,L_N\}$ is the group obtained from
$G_{(1,N)}$ by adjoining a {\em Fricke involution}:
\begin{gather}
     G_{ \{1,N\} }=
     \left
     \langle
     G_{(1,N)},\left[
               \begin{array}{cc}
                 0 & -1 \\
                 N & 0 \\
               \end{array}
             \right]
     \right
     \rangle.
\end{gather}
This is the group denoted $\Gamma_0(N)+N$ (or just $\Gamma_0(N)+$,
since $N$ is a prime power) in \cite{ConNorMM}. Notice that the
subgroup of $G$ stabilizing the set $\{L_1,L_{p^a}\}$ is the same as
the subgroup of $G$ stabilizing the set
$\{L_1,L_p,\ldots,L_{p^a}\}$, containing the lattices associated to
all powers of $p$ that divide $p^a$, and this latter set is
precisely the set of lattices $L$ for which the product of
hyperdistances $\delta(L_1,L)\delta(L,L_{p^a})$ coincides with the
hyperdistance between $L_1$ and $L_{p^a}$. Given $L'$ and $L''$ in
$P\mc{L}_1$, let us write $(L',L'')+$ for the set of lattices $L$
satisfying $\delta(L',L)\delta(L,L'')=\delta(L',L'')$.
\begin{gather}
     (L',L'')+
     =\left\{
     L\in P\mc{L}_1\mid
     \delta(L',L)\delta(L,L'')=\delta(L',L'')
     \right\}
\end{gather}
Following \cite{Con_UndrstndgGamma0N}, we call the set $(L',L'')+$
the {\em $(L',L'')$-thread}. The sets $(1,6)+$ and $(1,4)+$ appear
in (\ref{diag:arith:nodes|threads}).
\begin{equation}\label{diag:arith:nodes|threads}
     \xy
     (-25,0)*+{2}="2";
     (0,0)*+{6}="6";
     (-25,-20)*+{1}="1";
     (0,-20)*+{3}="3";
     {\ar@{-} "1"; "2"};
     {\ar@{-} "3"; "6"};
     {\ar@{=} "1"; "3"};
     {\ar@{=} "2"; "6"};
     \endxy
     \qquad
     \xy
     (-18,-15)*+{1}="1";
     (0,-5)*+{2}="2";
     (18,-15)*+{4}="4";
     {\ar@{-} "1"; "2"};
     {\ar@{-} "2"; "4"};
     \endxy
\end{equation}
In terms of threads, we have $G_{\{1,N\}}=G_{(1,N)+}$ when $N$ is a
prime power. More generally, for arbitrary $N\in \ZZp$, the group
$G_{(1,N)+}$ is the group obtained by adjoining to $G_{(1,N)}$, the
sets $W_e=W_e(N)$, where $e\in\ZZp$ is a divisor of $N$, and
\begin{gather}\label{eqn:arith:stabs|alinv}
     W_e=\left\{
     \left[
       \begin{array}{cc}
       ae & b \\
         cN & de \\
       \end{array}
     \right]
     \mid
     a,b,c,d\in\ZZ,\;ade^2-bcN=e
     \right\}.
\end{gather}
Note that $ade^2-bcN=e$ implies $\gcd\{e,N/e\}=1$, so the set $W_e$
is non-empty only when $e$ is an {\em exact divisor} of $N$. The
sets $W_e$ are exactly the cosets of $G_{(1,N)}=W_1$ in
$G_{(1,N)+}$. (We will sometimes write $W_e$ for a certain element
in $W_e(N)$ --- the validity of the statement in which this $W_e$
appears should be independent of the choice that is made.)

Suppose given $h,n\in\ZZp$ such that $h|n$. Then
\begin{gather}
     G_{(h,n)}=\left\{
          \left[
            \begin{array}{cc}
              a & b/h \\
              cn & d \\
            \end{array}
          \right]
          \mid
          a,b,c,d\in\ZZ,\;
          ad-bcn/h= 1\right\}.
\end{gather}
This group is conjugate to $G_{(1,n/h)}$ since
$G_{(h,n)}=g_h^{-1}G_{(1,n/h)}g_h$ (c.f.
(\ref{eqn:arith:names|gs})). The group $G_{(h,n)+}$ is evidently
obtained from $G_{(1,n/h)+}$ via conjugation by $g_h$, and so
$G_{(h,n)+}$ consists of $G_{(h,n)}$ together with the (other
co)sets $g_h^{-1}W_e(n/h)g_h$ for $e$ an exact divisor of $n/h$. 
The significance of the groups $G_{(h,n)+}$ is demonstrated by the
following result.
\begin{thm}[Atkin--Lehner]\label{thm:arith:stabs|AL}
For $N$ a positive integer, the normalizer of $G_{(1,N)}$ in
$\PSL_2(\RR)$ is the group $G_{(h,n)+}$, where $n=N/h$, and $h$ is
the largest divisor of $24$ such that $h^2|N$.
\end{thm}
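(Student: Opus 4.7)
The plan is to translate the normaliser condition into a statement about projective lattice pairs. Since the right $G$-action on $P\mc{L}_1$ satisfies $g^{-1} G_L g = G_{L \cdot g}$, we have
\[
g^{-1} G_{(1,N)} g \;=\; G_{(L_1 \cdot g,\; L_N \cdot g)},
\]
so $g \in \PSL_2(\RR)$ normalises $G_{(1,N)}$ if and only if the joint stabiliser of the pair $(L_1 \cdot g,\, L_N \cdot g)$ equals $G_{(1,N)}$. The strategy is therefore to enumerate all projective lattice pairs sharing this stabiliser.

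First I would reduce to rational matrices: $G_{(1,N)}$ has finite covolume in $\PSL_2(\RR)$, so its normaliser is discrete of finite covolume and commensurable with $\PSL_2(\ZZ)$, forcing $N(G_{(1,N)}) \subseteq \PGL_2^+(\QQ) = G$.

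For the forward containment $G_{(h,n)+} \subseteq N(G_{(1,N)})$, inspection of entries shows $G_{(1,N)} \subseteq G_{(h,n)}$ whenever $h \mid N$, so the real question is whether the inclusion is \emph{normal}. Conjugating a generic $\gamma_0 = \bigl(\begin{smallmatrix} a & b \\ cN & d \end{smallmatrix}\bigr) \in G_{(1,N)}$ by $M = \bigl(\begin{smallmatrix} \alpha & \beta/h \\ \gamma n & \delta \end{smallmatrix}\bigr) \in G_{(h,n)}$, the lower-left entry of $M\gamma_0 M^{-1}$ is divisible by $N = nh$ iff $h$ divides $\gamma\delta(a-d) + \delta^2 c h - \gamma^2 n b$, and since $h \mid n$ this reduces modulo $h$ to $h \mid \gamma\delta(a-d)$. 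Specialising to the choice $M = \bigl(\begin{smallmatrix} 1 & 0 \\ n & 1 \end{smallmatrix}\bigr)$, for which $\gamma = \delta = 1$, the requirement becomes $h \mid (a-d)$; since $ad \equiv 1 \pmod{h^2}$ for $\gamma_0 \in \Gamma_0(N)$, this holds for \emph{every} such $\gamma_0$ precisely when every $a \in (\ZZ/h)^\times$ satisfies $a^2 \equiv 1 \pmod h$, which is equivalent to the arithmetic condition $h \mid 24$. The Atkin--Lehner cosets $W_e$, for each exact divisor $e$ of $N$, then normalise $G_{(1,N)}$ by direct inspection of (\ref{eqn:arith:stabs|alinv}).

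The reverse containment $N(G_{(1,N)}) \subseteq G_{(h,n)+}$ is the main obstacle. Given $g \in G$ with $G_{(L_1 \cdot g,\, L_N \cdot g)} = G_{(1,N)}$, invariance of hyperdistance under the right $G$-action forces $\delta(L_1 \cdot g, L_N \cdot g) = N$; by \S\ref{sec:arith:names} each lattice in the pair has an explicit name $L_{M,b}$, and the stabiliser condition imposes congruences on the parameters $(M, b, M', b')$. I would then reduce to prime-power $N$ by a Chinese remainder argument, since the stabiliser of a pair factors through the primes dividing $N$ and the analysis decouples into independent problems at each $p \mid N$. For $N = p^a$ one reads off from the $(1, p^a)$-thread and its refinements that the pairs $(L, L')$ of hyperdistance $p^a$ with joint stabiliser equal to $G_{(1, p^a)}$ are precisely the $G_{(h_p,\, p^a/h_p)+}$-orbit of $(L_1, L_{p^a})$, where $h_p$ is the largest power of $p$ with $h_p \mid 24$ and $h_p^2 \mid p^a$. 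Reassembling via Chinese remainder yields $h = \prod_p h_p$, the largest divisor of $24$ satisfying $h^2 \mid N$, and hence $g \in G_{(h,n)+}$.
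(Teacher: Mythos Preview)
The paper does not supply a proof of this theorem; it states the result and remarks that ``a beautiful proof of the Atkin--Lehner Theorem appears in \cite{Con_UndrstndgGamma0N}.'' So your proposal is being compared against Conway's lattice-theoretic argument, whose framework the paper has set up in \S\ref{sec:arith}.

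Your forward containment is essentially sound and isolates the right arithmetic fact: $h \mid 24$ is precisely the condition that every unit in $\ZZ/h$ squares to $1$, whence $a \equiv d \pmod h$ for every $\bigl(\begin{smallmatrix} a & b \\ cN & d\end{smallmatrix}\bigr) \in \Gamma_0(N)$ once $h^2 \mid N$. Two small repairs are needed. First, you should also check integrality of the $(1,2)$-entry of $M\gamma_0 M^{-1}$; the same condition $h \mid \alpha\beta(d-a)$ falls out. Second, the Atkin--Lehner cosets comprising $G_{(h,n)+}$ are the $g_h^{-1}W_e(n/h)g_h$ for $e$ an exact divisor of $n/h = N/h^2$, not the $W_e(N)$ for $e\|N$ (see the paragraph just before Theorem~\ref{thm:arith:stabs|AL}); you would need to check that your generators actually produce $G_{(h,n)+}$.

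The reverse containment is where there is a genuine gap. The phrase ``one reads off from the $(1,p^a)$-thread and its refinements'' is a placeholder, not an argument: you have not said which pairs $(L,L')$ at hyperdistance $p^a$ have joint stabiliser exactly $G_{(1,p^a)}$, nor why $24$ reappears at this point. Conway's actual route is somewhat different from your pair formulation: he determines the full set $F\subset P\mc{L}_1$ of projective lattices fixed by $\Gamma_0(N)$, working prime by prime in the $p$-adic trees of \S\ref{sec:arith:trees}, and finds that $F$ is the $(1,N)$-thread together with certain extra lattices adjacent to it --- the extras appearing exactly because of the $a\equiv d\pmod h$ phenomenon you identified (for instance $\Gamma_0(4)$ fixes $L_{1,1/2}$ in addition to $L_1,L_2,L_4$). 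The normaliser must permute $F$, and one then identifies the setwise stabiliser of $F$ with $G_{(h,n)+}$. Your Chinese-remainder reduction is also unjustified as written: while $G_{(1,N)}=\bigcap_p G_{(1,p^{a_p})}$ is immediate, the normaliser of an intersection need not decompose as the intersection of normalisers, so the decoupling genuinely requires the tree-projection maps $\pi_p$ rather than a bare CRT appeal.
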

\begin{rmk}
A beautiful proof of the Atkin--Lehner Theorem appears in
\cite{Con_UndrstndgGamma0N}.
\end{rmk}

Recall that the action of $G$ on $P\mc{L}_1$ preserves hyperdistance
(c.f. \S\ref{sec:arith:hdist}). It follows that for any $L\in
P\mc{L}_1$, the group $G_L$ acts by permutations on the set
$HC_{N}(L)$, of lattices at hyperdistance $N$ from $L$. We call
$HC_N(L)$ the {\em hypercircle of hyperradius $N$ about $L$}.
\begin{gather}
     HC_N(L)=\left\{
          L'\in P\mc{L}_1\mid \delta(L,L')=N\right\}
\end{gather}
In fact, the action of $G_L$ on $HC_N(L)$ is transitive.
\begin{prop}\label{prop:arith:stabs|GLtrans}
For any positive integer $N$, the group $G_L=\Fix_G(L)$ acts
transitively on the set of lattices that are hyperdistant $N$ from
$L$.
\end{prop}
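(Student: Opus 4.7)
The plan is to reduce the statement to the case $L = L_1$ and then recast it as a classical double coset claim. Since $G$ acts transitively on $P\mc{L}_1$ and hyperdistance is $G$-invariant (cf.\ Section~\ref{sec:arith:names}), I can choose $g_0 \in G$ with $L \cdot g_0 = L_1$; conjugation by $g_0$ carries $G_L$ onto $G_{L_1} = G_1 = \PSL_2(\ZZ)$, and the right action of $g_0$ sends $HC_N(L)$ bijectively onto $HC_N(L_1)$. It therefore suffices to prove that $G_1$ acts transitively on $HC_N(L_1)$.

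Under the identification $P\mc{L}_1 \leftrightarrow \PSL_2(\ZZ) \backslash \PGL_2^+(\QQ)$ the right action of $G$ is just right multiplication, so two classes $\PSL_2(\ZZ) g_1$ and $\PSL_2(\ZZ) g_2$ lie in the same $G_1$-orbit precisely when $g_2 \in \PSL_2(\ZZ)\, g_1 \, \PSL_2(\ZZ)$; that is, the $G_1$-orbits on $P\mc{L}_1$ are exactly the double cosets $\PSL_2(\ZZ) \backslash \PGL_2^+(\QQ) / \PSL_2(\ZZ)$. By Lemma~\ref{lem:arith:hdist|slninv} the projective determinant descends to a well-defined function on this double coset space, and since $\delta(L_1, \PSL_2(\ZZ) g) = \Pdet(g)$, the proposition becomes equivalent to saying that each $N \in \ZZp$ is the projective determinant of a unique double coset.

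For this final step I would invoke the elementary divisor theorem. Given $g \in \PGL_2^+(\QQ)$, lift it to an element of $\GL_2^+(\QQ)$ and clear denominators to obtain $A \in M_2(\ZZ)$ with $\det A > 0$. Smith normal form then supplies $U, V \in \GL_2(\ZZ)$ and positive integers $d_1 \mid d_2$ with $UAV = \text{diag}(d_1, d_2)$; since $\det A > 0$ forces $\det U \det V = 1$, if both determinants equal $-1$ one absorbs a factor of $E = \text{diag}(1,-1)$ into each, using that $E\,\text{diag}(d_1,d_2)\,E = \text{diag}(d_1,d_2)$, to arrange $U, V \in \SL_2(\ZZ)$. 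Passing to $\PGL_2^+(\QQ)$ yields $[g] = [U^{-1}] \cdot [\text{diag}(1, N)] \cdot [V^{-1}]$ with $N = d_2/d_1$, exhibiting $[\text{diag}(1,N)]$ as a canonical representative of the double coset of $[g]$. Since $\Pdet(\text{diag}(1,N)) = N$, distinct values of $N$ yield distinct double cosets, which completes the argument. The only slightly delicate point is the passage from $\GL_2(\ZZ)$ to $\SL_2(\ZZ)$ in Smith normal form, but this sign bookkeeping is entirely routine in dimension two.
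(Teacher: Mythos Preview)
Your argument is correct. The reduction to $L=L_1$ is the same as the paper's, but from there you take a genuinely different path: you identify the $G_1$-orbits with double cosets in $\PSL_2(\ZZ)\backslash\PGL_2^+(\QQ)/\PSL_2(\ZZ)$ and then invoke Smith normal form to produce the canonical representative $[\operatorname{diag}(1,N)]$ in each double coset, matching the hyperdistance via $\Pdet$. The sign bookkeeping to pass from $\GL_2(\ZZ)$ to $\SL_2(\ZZ)$ is handled correctly by the observation $E\,\operatorname{diag}(d_1,d_2)\,E=\operatorname{diag}(d_1,d_2)$.

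The paper, by contrast, argues entirely within its lattice-naming apparatus: it first treats the prime-power case $N=p^n$ by writing down $HC_{p^n}(1)$ explicitly (Lemma~\ref{lem:arith:cusps|HCppow}), then shows directly that the elements $T$ and $(T^M)^t$ already sweep out the whole hypercircle by tracking both the forward names $L_{A,b}$ and the reverse names $\bar{L}_{b,A}$; the general case follows by the Chinese-remainder-style factorization of $HC_N(1)$ over primes. This buys the paper a strictly stronger conclusion---namely that $G_{(1,M)}$, not just $G_1$, acts transitively on $HC_N(1)$ whenever $\gcd\{M,N\}=1$---and keeps the argument inside Conway's tree picture. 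Your Smith normal form route is shorter and more standard, but it does not immediately yield that refinement; if you wanted it, you would need to control the $U,V$ in the normal form modulo congruence conditions, which is possible but no longer a one-liner.
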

We will furnish a proof of Proposition
\ref{prop:arith:stabs|GLtrans} in \S\ref{sec:arith:orbits}.

We may consider the subgroup of $G$ that fixes all the lattices in
$HC_N(L)$ for given $N\in\ZZp$ and $L\in P\mc{L}_1$. When $L$ is the
distinguished lattice $L=L_1$, this is just the group $\Gamma(N)$
--- the {\em principal congruence group of level $N$}.
\begin{gather}
     1\to\Gamma(N)\to\PSL_2(\ZZ)\to\PSL_2(\ZZ/N)\to 1
\end{gather}
Recall that a group $H<G_1\cong\PSL_2(\ZZ)$ is called a {\em
congruence group} if $H$ contains $\Gamma(N)$ for some $N$. A
congruence group $H$ is said to have {\em level $N$} if $N$ is the
smallest positive integer such that $\Gamma(N)<H$. We will apply
these definitions also to arithmetic groups: we will say that a
subgroup $\Gamma<\PSL_2(\RR)$ that is commensurable with $G_1$ is a
{\em congruence group} if $\Gamma$ contains $\Gamma(N)$ for some
positive integer $N$, and we will say that such a group $\Gamma$ has
{\em level $N$}, if $N$ is the minimal positive integer for which
$\Gamma(N)<\Gamma$.

\begin{rmk}
See Lemma \ref{lem:arith:cusps|HCppow} for an explicit description
of $HC_N(1)$ in the case that $N$ is a prime power.
\end{rmk}

\subsection{Trees}\label{sec:arith:trees}

Let $p$ be a prime, and consider the set $(P\mc{L}_1)_p\subset
P\mc{L}_1$ consisting of (projective) lattices that are hyperdistant
a power of $p$ from $L_1$.
\begin{gather}
     (P\mc{L}_1)_p=\left\{L\in P\mc{L}_1\mid \delta(L_1,L)\in
     p^{\ZZ}
          \right\}
\end{gather}
Following Conway \cite{Con_UndrstndgGamma0N}, we call
$(P\mc{L}_1)_p$ the {\em $p$-adic tree} in $P\mc{L}_1$.

For $L,L'\in P\mc{L}_1$ we say $L$ and $L'$ are {\em $p$-adically
equivalent}, and write $L\sim_p L'$, if $p\nmid\delta(L,L')$. It is
easy to check that the $p$-adic equivalence class of any projective
lattice $L\in P\mc{L}_1$ has a unique representative in
$(P\mc{L}_1)_p$.
\begin{prop}\label{prop:arith:trees|uniquepadicrep}
For any $L\in P\mc{L}_1$, and any prime $p$, there is a unique
$L'\in P\mc{L}_1$ such that $L\sim_pL'$ and $L'\in(P\mc{L}_1)_p$.
\end{prop}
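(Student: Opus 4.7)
The plan is to prove existence and uniqueness separately, both hinging on one key lemma: that the projective determinant is submultiplicative, namely $\Pdet(AB)$ divides $\Pdet(A)\Pdet(B)$ in $\ZZp$ for all $A,B\in\GL_2^+(\QQ)$. Writing $\delta(L,L'')=\Pdet(g_L g_{L''}^{-1}) =\Pdet((g_L g_{L'}^{-1})(g_{L'} g_{L''}^{-1}))$, this immediately gives the divisibility $\delta(L,L'')\mid\delta(L,L')\cdot\delta(L',L'')$ for any three projective lattices, and hence the $p$-adic triangle inequality $v_p(\delta(L,L''))\leq v_p(\delta(L,L'))+v_p(\delta(L',L''))$ at every prime $p$.

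To establish the submultiplicativity of $\Pdet$, I would first verify that $v_p(\alpha_A)=-\min_{i,j}v_p(a^i_j)$ for each prime $p$, by case-splitting on the sign of this minimum and unwinding the formula $\alpha_A=\mathrm{lcm}\{c^i_j\}/\gcd\{b^i_j\}$ (with entries in lowest terms). Since $(AB)^i_j=\sum_k a^i_k b^k_j$, the bound $\min_{i,j}v_p((AB)^i_j)\geq \min_{i,j}v_p(a^i_j)+\min_{i,j}v_p(b^i_j)$ is automatic, and yields $v_p(\alpha_{AB})\leq v_p(\alpha_A)+v_p(\alpha_B)$ at every prime. Therefore $\alpha_A\alpha_B/\alpha_{AB}\in\ZZp$, and $\Pdet(A)\Pdet(B)/\Pdet(AB)=(\alpha_A\alpha_B/\alpha_{AB})^2\in\ZZp$ as desired. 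Correctly tracking the interplay between the gcd of numerators and the lcm of denominators inside $\alpha_A$ is the main technical obstacle; everything else is formal.

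For existence, I would pass to $\ZZ$-lattice representatives. Applying Smith normal form to the pair $(L_1,\tl L)$ for any $\ZZ$-lattice representative $\tl L$ of $L$, then scaling to kill the smaller invariant factor, I produce the unique scalar multiple of $L$ sitting inside $L_1$ with cyclic quotient $L_1/L\cong\ZZ/N$, where $N=\delta(L_1,L)$. Pick any Smith basis $\{f_1,f_2\}$ of $L_1$ with $L=\ZZ f_1+\ZZ N f_2$, factor $N=p^k m$ with $\gcd(p,m)=1$, and set $L':=\ZZ f_1+\ZZ p^k f_2$. The chain $L\subset L'\subset L_1$ has cyclic quotients $L_1/L'\cong\ZZ/p^k$ and $L'/L\cong\ZZ/m$, and $L'$ is independent of the choice of Smith basis (it is the unique intermediate lattice making these orders work, corresponding to the unique subgroup of $L_1/L\cong\ZZ/N$ of order $m$). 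So $\delta(L_1,L')=p^k$ places $L'$ in $(P\mc{L}_1)_p$, while $\delta(L,L')=m$ places $L'$ in the same $p$-adic class as $L$.

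For uniqueness, suppose $L',L''$ both satisfy the conclusion. At the prime $p$, routing through $L$ gives $v_p(\delta(L',L''))\leq v_p(\delta(L',L))+v_p(\delta(L,L''))=0+0=0$. At any prime $q\neq p$, routing through $L_1$ gives $v_q(\delta(L',L''))\leq v_q(\delta(L',L_1))+v_q(\delta(L_1,L''))=0+0=0$, since both $\delta(L_1,L')$ and $\delta(L_1,L'')$ are powers of $p$. All local valuations of $\delta(L',L'')$ being zero forces $\delta(L',L'')=1$, so $L'=L''$ as projective lattices.
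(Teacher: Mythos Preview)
Your proof is correct. The paper does not actually supply a proof of this proposition---it is stated there only as ``easy to check''---so there is no argument to compare against. Your approach is sound on both halves. The submultiplicativity $\Pdet(AB)\mid\Pdet(A)\Pdet(B)$, equivalently the divisibility $\delta(L,L'')\mid\delta(L,L')\,\delta(L',L'')$, is exactly the right lemma for uniqueness, and your derivation via $v_p(\alpha_A)=-\min_{i,j}v_p(a^i_j)$ goes through. For existence, the Smith-normal-form construction of the intermediate lattice $L'$ with $L_1/L'\cong\ZZ/p^{k}$ and $L'/L\cong\ZZ/m$ works as stated; the identification of the cyclic index with the hyperdistance follows because an inclusion $M\subset L$ with $L/M\cong\ZZ/d$ is in the $G$-orbit of the pair $(L_1,L_d)$, whence $\delta(L,M)=\Pdet(g_d)=d$.

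One small simplification: you need not route the computation of $v_p(\alpha_A)$ through the explicit gcd/lcm formula (which, incidentally, is misprinted in the paper: it should read $\alpha_A=\mathrm{lcm}\{c^i_j\}/\gcd\{b^i_j\}$, as you have it). The definition of $\alpha_A$ as the least positive rational with $\alpha_A A\in M_2(\ZZ)$ already gives $v_p(\alpha_A)=-\min_{i,j}v_p(a^i_j)$ directly, since the set of admissible $\alpha$ is visibly $\alpha_A\cdot\ZZp$ by a prime-by-prime check. This lets you bypass the case-split you flagged as the main technical obstacle.
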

Consequently, we obtain a map $\pi_p:P\mc{L}_1\to (P\mc{L}_1)_p$,
called the {\em $p$-adic projection}, by setting $\pi_p(L)=L'$ when
$L$ and $L'$ are as in Proposition
\ref{prop:arith:trees|uniquepadicrep}.

As explained in \cite{Con_UndrstndgGamma0N}, the set $(P\mc{L}_1)_p$
has a natural tree structure; viz. if we regard the elements of
$(P\mc{L}_1)_p$ as vertices of a graph, with edges joining just
those lattices that are hyperdistant $p$ from each other, we obtain
a graph with the property that there is a unique shortest path
between any two vertices. That is, we obtain a tree. The $p$-adic
tree $(P\mc{L}_1)_p$ has infinitely many nodes, and each node is
$p+1$ valent.

A finite subset $S\subset P\mc{L}_1$ is called a {\em cell} if, for
each prime $p$, the subtree of $(P\mc{L}_1)_p$ generated by the set
$\pi_p(S)\subset (P\mc{L}_1)_p$ is either a point, or two points
joined by an edge.

The methods of \cite{Con_UndrstndgGamma0N} illustrate the utility of
the trees $(P\mc{L}_1)_p$ and the projections $\pi_p$. The following
result is a prime example of this.
\begin{prop}[\cite{Con_UndrstndgGamma0N}]\label{prop:arith:trees|stabcell}
If $\Gamma$ is a subgroup of $\PGL_2^+(\QQ)$ that is commensurable
with $G_1\cong\PSL_2(\ZZ)$, then $\Gamma$ stabilizes a cell in
$P\mc{L}_1$.
\end{prop}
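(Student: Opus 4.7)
The plan is to exploit the decomposition of $P\mc{L}_1$ into its $p$-adic components, finding a $\Gamma$-stable vertex or edge in each $p$-adic tree $(P\mc{L}_1)_p$ separately, and then assembling these into a cell by a local-to-global construction. As a preliminary, I first verify that every $\Gamma$-orbit on $P\mc{L}_1$ is finite: for any $L \in P\mc{L}_1$, the $\Gamma$-stabilizer of $L$ equals $\Gamma \cap G_L$, and because $G$ acts transitively on $P\mc{L}_1$, the group $G_L$ is $G$-conjugate to $G_1 \cong \PSL_2(\ZZ)$, hence commensurable with $\Gamma$; thus $\Gamma \cap G_L$ has finite index in $\Gamma$, and the orbit is of cardinality equal to this index.

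Since hyperdistance is $G$-invariant, the equivalence relation $\sim_p$ is preserved, and the $G$-action on $P\mc{L}_1$ descends to an action on the tree $(P\mc{L}_1)_p$ given by $L \cdot g := \pi_p(L \cdot g)$ for $L \in (P\mc{L}_1)_p$. This descended action is by tree automorphisms, since adjacency in $(P\mc{L}_1)_p$ corresponds to $p$-adic distance and hyperdistance factors multiplicatively across primes. Consequently $\Gamma$ acts on each tree $(P\mc{L}_1)_p$ by tree automorphisms with finite orbits, and the standard fixed-point theorem for groups acting on trees (Serre) produces, for each prime $p$, a non-empty $\Gamma$-stable subtree $E_p \subset (P\mc{L}_1)_p$ that is either a single vertex or a single edge (the two endpoints perhaps being interchanged by some $\gamma \in \Gamma$). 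I next observe that $E_p$ may be taken to be $\{\pi_p(L_1)\}$ for all but finitely many $p$: the group $\Gamma$, being commensurable with the finitely generated group $G_1$, is itself finitely generated, and each of its finitely many generators lies in the vertex-stabilizer $\PGL_2(\ZZ_p) \subset \PGL_2(\QQ_p)$ of $\pi_p(L_1)$ whenever $p$ avoids the finite set of primes appearing in the denominators of its matrix entries or in its projective determinant.

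Finally I assemble a cell by setting
\begin{gather*}
     S := \{L \in P\mc{L}_1 \mid \pi_p(L) \in E_p \text{ for every prime } p\}.
\end{gather*}
By a local-to-global argument (a projective $\QQ$-lattice is determined by, and may be freely prescribed by, its $\ZZ_p$-completions, provided these agree with the standard one $\pi_p(L_1)$ for almost every $p$), the set $S$ is non-empty and finite, of cardinality $\prod_p |E_p|$. By construction $\pi_p(S) \subset E_p$, so the subtree of $(P\mc{L}_1)_p$ generated by $\pi_p(S)$ is a point or edge, confirming that $S$ is a cell. And $S$ is $\Gamma$-stable: for $\gamma \in \Gamma$ and $L \in S$, the compatibility $\pi_p(L \cdot \gamma) = \pi_p(L) \cdot \gamma$ (where the right-hand side uses the descended action on $(P\mc{L}_1)_p$, and the identity holds because $L \sim_p \pi_p(L)$ implies $L \cdot \gamma \sim_p \pi_p(L) \cdot \gamma$) places $\pi_p(L \cdot \gamma)$ in the $\Gamma$-stable set $E_p$. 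The main obstacle is this final local-to-global assembly, which rests on identifying $P\mc{L}_1$ with a restricted product of Bruhat--Tits tree vertices over the primes --- a perspective not explicitly developed in the exposition so far, though implicit in Conway's treatment.
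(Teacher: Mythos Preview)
The paper does not supply a proof of this proposition; it is stated with a citation to Conway's article \cite{Con_UndrstndgGamma0N} and then immediately used to deduce Helling's Theorem. So there is no in-paper argument to compare against.

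Your argument is correct and is essentially Conway's own approach. A few remarks on the details: your claim that $\Gamma$ is finitely generated is justified (the finite-index subgroup $\Gamma\cap G_1$ is finitely generated, being of finite index in the finitely generated group $\PSL_2(\ZZ)$, and $\Gamma$ is generated by $\Gamma\cap G_1$ together with finitely many coset representatives); the well-definedness of the descended $\Gamma$-action on $(P\mc{L}_1)_p$ follows exactly as you indicate, from $G$-invariance of $\sim_p$; and the finiteness of orbits on the tree follows because each such orbit is an image under $\pi_p$ of a finite orbit in $P\mc{L}_1$. The invocation of the tree fixed-point theorem (a group acting on a tree with a bounded orbit fixes a vertex or inverts an edge) is the right tool. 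Conway phrases this step slightly differently, taking the finite $\Gamma$-orbit of a single lattice, projecting it into each $p$-adic tree, and locating a canonical ``centre'' (vertex or edge) of the resulting finite subtree, which is then automatically $\Gamma$-stable; but this is the same idea.

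The local-to-global assembly you flag as the main obstacle is indeed not made explicit in the paper's exposition, but it is straightforward once one recognises (as Conway does) that the projections $\pi_p$ jointly give a bijection between $P\mc{L}_1$ and the restricted product of the vertex sets of the trees $(P\mc{L}_1)_p$, based at $L_1$. Concretely, the lattice $L_{M,b}$ with $M\in\QQp$ and $b\in\QQ\cap[0,1)$ is recovered from its $p$-adic projections via the Chinese Remainder Theorem applied to $M$ and $b$.
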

It is well known that a subgroup of $\PGL_2^+(\RR)$ that is
commensurable with $\PSL_2(\ZZ)$ is contained in $\PGL_2^+(\QQ)$
(c.f. Proposition \ref{prop:arith:cusps|commenswG1inQ}). For any
cell $S\subset P\mc{L}_1$ we can find lattices $L$ and $L'$ in $S$
that maximize $\delta(L,L')$. After conjugation by an element of
$\PGL_2^+(\QQ)$ we may assume that $L=L_1$. Since $G_1\cong
\PSL_2(\ZZ)$ acts transitively on the lattices of any given
hyperdistance from $L_1$ (c.f. Proposition
\ref{prop:arith:stabs|GLtrans}), we may conjugate the pair $(L,L')$
to $(L_1,L_N)$ for some $N$. These observations, together with
Proposition \ref{prop:arith:trees|stabcell}, quickly imply the
following result, known as Helling's Theorem.
\begin{thm}[\cite{Hel_GpsCommensModGp}]
The maximal arithmetic subgroups of $\PSL_2(\RR)$ are the conjugates
of $G_{(1,N)+}$ for square-free $N$.
\end{thm}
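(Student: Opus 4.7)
The plan is to combine Proposition \ref{prop:arith:trees|stabcell}, which places any arithmetic subgroup inside the stabilizer of some cell in $P\mc{L}_1$, with the Atkin--Lehner Theorem (Theorem \ref{thm:arith:stabs|AL}), which identifies $G_{(1,N)+}$ as the normalizer of $G_{(1,N)}$ when $N$ is square-free. The proof has two halves: every arithmetic $\Gamma$ embeds in a conjugate of some $G_{(1,N)+}$ with $N$ square-free, and conversely each such $G_{(1,N)+}$ is maximal.

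First, let $\Gamma < \PSL_2(\RR)$ be arithmetic. By Proposition \ref{prop:arith:cusps|commenswG1inQ} we have $\Gamma < \PGL_2^+(\QQ)$, and by Proposition \ref{prop:arith:trees|stabcell} the group $\Gamma$ stabilizes some cell $S \subset P\mc{L}_1$. Choose $L, L' \in S$ with $N := \delta(L, L')$ maximal among hyperdistances between pairs in $S$. After conjugating $\Gamma$ by a suitable element of $\PGL_2^+(\QQ)$ we may assume $L = L_1$, and since $G_1 \cong \PSL_2(\ZZ)$ acts transitively on hypercircles centered at $L_1$ (Proposition \ref{prop:arith:stabs|GLtrans}), a further conjugation by an element of $G_1$ arranges $L' = L_N$. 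For each prime $p$ with $p^a \| N$, the $p$-adic projections $\pi_p(L_1) = L_1$ and $\pi_p(L_N) = L_{p^a}$ lie at tree-distance $a$ in $(P\mc{L}_1)_p$; since $\pi_p(S)$ must generate a subtree that is either a point or a single edge, we conclude $a \leq 1$, whence $N$ is square-free.

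To conclude $\Gamma \subseteq G_{(1, N)+}$, note that for square-free $N$ the thread $(1, N)+$ is the finite set $\{L_d : d \mid N\}$, is itself cellular, and has every maximal-hyperdistance pair of the form $\{L_d, L_{N/d}\}$; a direct calculation with the diagonal forms $g_d$ shows that the thread through any such pair is again $(1, N)+$. Enlarging $S$ if necessary to contain the whole thread (which remains cellular by square-freeness), any $\gamma \in \Gamma$ permutes the finite set of maximal pairs of $S$, sending $\{L_1, L_N\}$ to some $\{L', L''\}$, and therefore maps the thread $(1, N)+$ onto $(L', L'')+ = (1, N)+$. Hence $\Gamma$ stabilizes $(1, N)+$ setwise, and since the setwise stabilizer of the thread in $G$ is $G_{(1, N)+}$, we obtain $\Gamma \subseteq G_{(1, N)+}$.

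For the maximality statement: when $N$ is square-free, taking $h = 1$ in Theorem \ref{thm:arith:stabs|AL} shows $G_{(1, N)+}$ equals the normalizer of $G_{(1, N)}$ in $\PSL_2(\RR)$, hence is self-normalizing. If an arithmetic group $\Gamma'$ strictly contained $G_{(1, N)+}$, then by the first half $\Gamma'$ would sit inside a conjugate $g G_{(1, M)+} g^{-1}$ for some square-free $M$ and $g \in \PGL_2^+(\QQ)$; self-normalization of $G_{(1, N)+}$, combined with the comparison of the two containments, forces equality $\Gamma' = G_{(1, N)+}$. The main anticipated obstacle is the cell-enlargement step: one must verify that extending $S$ to include the whole thread $(1, N)+$ still yields a cell, which is true when $N$ is square-free but needs careful bookkeeping when $S$ originally contains lattices outside the thread, and one must then check that the image of the thread under any $\gamma \in \Gamma$ coincides with the original thread rather than a parallel one.
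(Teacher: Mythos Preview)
Your outline matches the paper's sketch exactly: invoke Proposition~\ref{prop:arith:trees|stabcell} to obtain a stabilized cell, pick a maximal-hyperdistance pair, conjugate it to $(L_1,L_N)$ via Proposition~\ref{prop:arith:stabs|GLtrans}, and read off square-freeness of $N$ from the cell condition. The paper stops there, saying only that these observations ``quickly imply'' the theorem, so you are attempting to supply details the paper leaves implicit.

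The gap is in your thread step. Your claim that the thread through \emph{every} maximal-hyperdistance pair of $S$ equals $(1,N)+$ is false in general. Take $S=\{L_1,L_6,L_{10},L_{15}\}$: this is a cell (each $\pi_p(S)$ is the edge $\{L_1,L_p\}$ for $p\in\{2,3,5\}$ and a point otherwise), the maximal hyperdistance is $N=15$, attained both by $\{L_1,L_{15}\}$ and by $\{L_6,L_{10}\}$, yet $(L_6,L_{10})+=\{L_2,L_6,L_{10},L_{30}\}$ is disjoint from $(1,15)+=\{L_1,L_3,L_5,L_{15}\}$. The arithmetic group $\Gamma=\langle G_{(1,30)},W_6\rangle$ stabilizes $S$ (the involution $W_6$ swaps $L_1\leftrightarrow L_6$ and $L_{10}\leftrightarrow L_{15}$) but sends $(1,15)+$ to $(L_6,L_{10})+$, so $\Gamma\not\subseteq G_{(1,15)+}$; your argument would reach the wrong conclusion here. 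Your ``enlarge $S$ to contain the thread'' manoeuvre does not help, since $\Gamma$ need not stabilize the enlarged set: in this example $W_6$ sends $L_3\in S\cup(1,15)+$ to $L_2\notin S\cup(1,15)+$.

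The repair is to pass not to $S\cup(1,N)+$ but to the \emph{closure} $\bar S=\{L:\pi_p(L)\in\pi_p(S)\text{ for all }p\}$. Any $g\in G$ preserving $S$ preserves each equivalence class set $\pi_p(S)$ (in the quotient $P\mc{L}_1/{\sim_p}$), hence preserves $\bar S$; so $\Gamma\subseteq G_{\bar S}$. Now $\bar S$ is itself a cell with the same projections, and it \emph{does} contain a pair at hyperdistance $N'=\prod_{p:|\pi_p(S)|=2}p$; after conjugating that pair to $(L_1,L_{N'})$ one finds $\bar S=(1,N')+$ on the nose, whence $\Gamma\subseteq G_{(1,N')+}$ with $N'$ square-free. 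In the example above $N'=30$, not $15$. Your maximality paragraph is also too telegraphic: ``self-normalization \ldots\ forces equality'' is not an argument. The clean route is that $G_{(1,N)+}$ is the \emph{full} setwise stabilizer of the cell $(1,N)+$, and one then checks that this is the only cell it stabilizes.
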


\subsection{Characters}\label{sec:arith:chars}

Let $N$ be a positive integer, and let $h$ be the largest divisor of
$24$ such that $h^2|N$. Observe that $G_{(h,N/h)}$ contains
$G_{(1,N)}$. According to \cite{ConNorMM} (c.f.
\cite{ConMcKSebDiscGpsM}), there is a canonically defined subgroup
of index $h$ in $G_{(h,N/h)}$ that contains $G_{(1,N)}$, and it may
be realized as the kernel of a (non-canonically defined)
homomorphism
\begin{gather}
     \lambda:G_{(h,N/h)}\to \ZZ/h.
\end{gather}
We will write $G_{(h,N/h)}^{(h)}$ for this group that arises as
$\ker(\lambda)$. This group is denoted $\Gamma_0(n|h)$ in
\cite{ConNorMM}.

The cases that $N=9$ and $N=8$ will be of particular relevance in
\S\ref{sec:dyn}.

\subsubsection{Example: $N=9$.}\label{sec:arith:chars:N=9}

If $N=9$ then $h=3$ and $N/h=3$. Evidently, $G_{(3,3)}=G_3$. Define
generators $x$ and $y$ for $G_3/G_{(1,9)}$ (recall from Theorem
\ref{thm:arith:stabs|AL} that $G_{(1,9)}$ is normal in $G_3$) by
setting
\begin{gather}
     x=G_{(1,9)}T^{1/3}
     =G_{(1,9)}\left[
         \begin{array}{cc}
           1 & 1/3 \\
           0 & 1 \\
         \end{array}
       \right],\\
       y=G_{(1,9)}(T^3)^t
       =G_{(1,9)}\left[
           \begin{array}{cc}
             1 & 0 \\
             3 & 1 \\
           \end{array}
         \right].
\end{gather}
A suitable map $\lambda: G_3/G_{(1,9)}\to\ZZ/3$ may be defined by
assigning $\lambda(y)=\sigma$ and $\lambda(x)=\sigma^{-1}$, where
$\sigma$ is a generator for $\ZZ/3$ (c.f. \cite{ConMcKSebDiscGpsM}).

Let us also analyze the situation in terms of the $3$-adic tree
(c.f. \S\ref{sec:arith:trees}). The group $G_3$ preserves the set
$HC_3(3)$ --- the hypercircle of hyperradius $3$ about $L_3$. There
are exactly $4$ lattices in $HC_3(3)$; they appear in the diagram
(\ref{diag:dyn:nodes|HC3(3)}), which displays the smallest subtree
of the $3$-adic tree $(P\mc{L}_1)_3$ that contains $HC_3(3)$.
\begin{equation}\label{diag:dyn:nodes|HC3(3)}
     \xy
     (0,20)*+{{1,{2}/{3}}}="12t";
     (0,0)*+{3}="3";
     (-20,0)*+{1,1/3}="11t";
     (20,0)*+{9}="9";
     (0,-20)*+{1}="1";
     {\ar@{-} "12t";"3"};
     {\ar@{-} "11t";"3"};
     {\ar@{-} "1";"3"};
     {\ar@{-} "9";"3"};
     {\ar^*+{T^{1/3}} @/^1.3pc/ @{.>} "1";"11t"};
     {\ar^*+{T^{1/3}} @/^1.3pc/ @{:>} "11t";"12t"};
     (-8,10)*+{(T^3)^t};
     {\ar_*+{(T^3)^t} @/^1.3pc/ @{.>} "12t";"9"};
     \endxy
\end{equation}
Observe that $G_{1,9}$ fixes every element of $HC_3(3)$. Thus, there
is a natural map $G_3/G_{1,9}\to{\rm Sym}(HC_3(3))$. Diagram
(\ref{diag:dyn:nodes|HC3(3)}) also displays the $3$-cycles in
$\Sym(HC_3(3))$ generated by (the images in $\Sym(HC_3(3))$ of) $x$
and $y$. Evidently, the image of $G_3/G_{1,9}$ in ${\rm
Sym}(HC_3(3))$ is just ${\rm Alt}(HC_3(3))$ --- a copy of the
alternating group on $4$ letters. This group has order $12$. It
contains $8$ elements of order $3$, and $3$ elements of order $2$.
Any non-trivial homomorphism ${\rm Alt}(HC_3(3))\to\ZZ/3$ must be
trivial on elements of order $2$, and non-trivial on the elements of
order $3$.

We conclude that $G_3^{(3)}$ consists of all the elements of $G_3$
that induce permutations of order $2$ (or $1$) on the set $HC_3(3)$,
and the permutations of order $2$ occurring are just those that
arise as a product of $2$ disjoint transpositions.
\begin{gather}
     G_{3}^{(3)}=\lab
          G_{(1,9)},(T^3)^tT^{1/3},T^{-1/3}(T^3)^tT^{-1/3}
               \rab
\end{gather}
The group $G_{3}^{(3)}$ is denoted $\Gamma_0(3|3)$ in
\cite{ConNorMM}.

\subsubsection{Example: $N=8$.}\label{sec:arith:chars:N=8}

If $N=8$ then $h=2$ and $N/h=4$. The group $G_{(2,4)}$ contains
$G_{(1,8)}$ normally by Theorem \ref{thm:arith:stabs|AL}. Consider
the set $S=HC_2(2)\cup HC_2(4)$.
\begin{equation}\label{diag:dyn:nodes|HC2(2)cupHC2(4)}
     \xy
     (38,10)*+{2,1/2}="2h";
     (20,0)*+{4}="4";
     (38,-10)*+{8}="8";
     (0,0)*+{2}="2";
     (-18,-10)*+{1}="1";
     (-18,10)*+{1,1/2}="1h";
     {\ar@{-} "2h"; "4"};
     {\ar@{-} "2"; "4"};
     {\ar@{-} "8"; "4"};
     {\ar@{-} "1h"; "2"};
     {\ar@{-} "1"; "2"};
     {\ar^*+{(T^4)^t} @/^1.5pc/ @{<.>} "2h";"8"};
     {\ar^*+{W_8} @/^1.5pc/ @{<.>} "1h";"2h"};
     {\ar_*+{W_{8}} @/_1.5pc/ @{<.>} "1";"8"};
     {\ar^*+{T^{1/2}} @/^1.5pc/ @{<.>} "1"; "1h"};
     \endxy
\end{equation}
Diagram (\ref{diag:dyn:nodes|HC2(2)cupHC2(4)}) displays the smallest
subtree of the $2$-adic tree (c.f. \S\ref{sec:arith:trees})
containing $S$, together with the actions on this tree induced by
some elements of $G\cong\PGL_2^+(\QQ)$ (c.f.
\S\ref{sec:arith:stabs}). The symbol $W_8$ in
(\ref{diag:dyn:nodes|HC2(2)cupHC2(4)}) denotes an (arbitrary)
element of $g_2^{-1}W_2(2)g_2$ (c.f. \S\ref{sec:arith:stabs}).

The group $G_{(2,4)}$ acts by permutations on $S$; the subgroup
$G_{(1,8)}$ is exactly the subgroup that fixes every element of $S$.
We see from (\ref{diag:dyn:nodes|HC2(2)cupHC2(4)}) that $G_{(2,4)}$
is generated by $G_{(1,8)}$ together with $T^{1/2}$ and $(T^4)^t$.
The group $G_{\{2,4\}}$ --- the full normalizer of $G_{(1,8)}$ ---
is obtained by adjoining $W_8$ to $G_{(2,4)}$.

We thus obtain generators $x,y$ for $G_{(2,4)}/G_{(1,8)}$ by setting
$x=G_{(1,8)}T^{1/2}$ and $y=G_{(1,8)}(T^4)^t$, say. We obtain a
suitable map $\lambda:G_{(2,4)}/G_{(1,8)}\to \ZZ/2$ by setting
$\lambda(x)=\lambda(y)=\sigma$, where $\sigma$ is the non-trivial
element of $\ZZ/2$.
\begin{gather}\label{eqn:arith:chars:N=8|G24}
     G_{(2,4)}^{(2)}=\lab
          G_{(1,8)},(T^4)^tT^{1/2}
               \rab
\end{gather}
We can also carry out this story with $G_{\{2,4\}}$ in place of
$G_{(2,4)}$; that is, we can adjoin the set $g_2^{-1}W_2(2)g_2$ to
(\ref{eqn:arith:chars:N=8|G24}), just as we adjoin this same set to
$G_{(2,4)}$ in order to recover $G_{(2,4)}$ (c.f.
(\ref{eqn:arith:stabs|alinv})). We set
\begin{gather}\label{eqn:arith:chars:N=8|G24plus}
     G_{\{2,4\}}^{(2)}=\lab
          G_{(1,8)},(T^4)^tT^{1/2},W_8
               \rab.
\end{gather}
Evidently, $G_{\{2,4\}}^{(2)}$ may be characterized as the kernel of
the following composition of natural mappings:
$G_{\{2,4\}}\to\Sym(S)\to\ZZ/2$. This group is denoted
$\Gamma_0(4|2)+$ in \cite{ConNorMM}.

\subsection{Cusps}\label{sec:arith:cusps}

From now on we will restrict to the case that $\kk=\RR$.

There is an obvious embedding $\PSL_2(\RR)\hookrightarrow
\PGL_2^+(\RR)$ arising from the embedding of groups
$\SL_2(\RR)\hookrightarrow \GL_2^+(\RR)$. Observe that this map
\begin{gather}
     \PSL_2(\RR)\hookrightarrow \PGL_2^+(\RR)
\end{gather}
is in fact surjective, so that $\PGL_2^+(\RR)=\PSL_2(\RR)$. There is
also an embedding $\GL_2^+(\QQ)\hookrightarrow\GL_2^+(\RR)$ coming
the the embedding of fields $\QQ\hookrightarrow \RR$. The map
$\PGL_2^+(\QQ)\to\PGL_2^+(\RR)$ sending a $\QQm$-orbit in
$\GL_2^+(\QQ)$ to the $\RRm$-orbit of its image in $\GL_2^+(\RR)$ is
readily checked to be an embedding; we conclude that $\PGL_2^+(\QQ)$
embeds naturally in $\PSL_2(\RR)$.

Our choice ${\bf v}_1\in\mc{B}^+_1$ allows us to identify
$PV^{\times}=\RRm\backslash V^{\times}$ with the real projective
line $\PP^1(\RR)=\RR\cup\{\infty\}$; viz.
\begin{gather}
     [\alpha u_1+ \beta v_1]
     \leftrightarrow
     \begin{cases}
          \alpha/\beta,&\text{ if $\beta\neq 0$;}\\
     \infty,&\text{ if $\beta=0$;}
     \end{cases}
\end{gather}
where ${\bf v}_1=(u_1,v_1)$. The resulting left action of
$\PGL_2(\RR)$ on $PV^{\times}$ is given by
\begin{gather}
     \left[
       \begin{array}{cc}
         a & b \\
         c & d \\
       \end{array}
     \right]\cdot [\alpha u_1+\beta v_1]
     =[(a\alpha +b\beta )u_1+(c\alpha+d\beta)v_1].
\end{gather}
In terms of the identification $PV^{\times}\leftrightarrow
\RR\cup\{\infty\}$ this translates to the familiar prescription
\begin{gather}
     \left[
       \begin{array}{cc}
         a & b \\
         c & d \\
       \end{array}
     \right]\cdot \alpha
     =\frac{a\alpha +b }{c\alpha +d }.
\end{gather}
Evidently, it is natural to regard the group $\PGL_2^+(\RR)$ as
acting both from the right on projective lattices $P\mc{L}$, and
from the left on the projective line $\PP^1(\RR)$. Similarly, we may
regard our group $G\cong\PGL_2^+(\QQ)$ (see \S\ref{sec:arith:stabs})
as acting both from the right on the projective lattices $P\mc{L}_1$
in $V_1$, and from the left on the rational projective line
$\PP^1(\QQ)$. Also, there is a natural embedding
$\PP^1(\QQ)\hookrightarrow \PP^1(\RR)$.

Recall that $T$ typically denotes the element of
$\PSL_2(\RR)=\PGL_2^+(\RR)$ represented by the upper-triangular
unipotent matrix with $1$ in the top right-hand corner. We convene
to write $T^A$, for $A\in\RR$, for the element of $\PGL_2^+(\RR)$
represented by the upper-triangular unipotent matrix with $A$ in the
top right-hand corner.
\begin{gather}
     T^A:=\left[
            \begin{array}{cc}
              1 & A \\
              0 & 1 \\
            \end{array}
          \right]
          \in \PGL_2^+(\RR)
\end{gather}
Evidently, $T^AT^B=T^{A+B}$ for $A,B\in\RR$, so that the assignment
$A\mapsto T^A$ defines an embedding (of groups) $\RR\hookrightarrow
\PGL_2^+(\RR)$. Observe that the stabilizer of $\infty\in\PP^1(\RR)$
in $\PGL_2^+(\RR)$ (let us write $\Fix(\infty)$ for this group)
contains the image of $\RR$ under this embedding.
\begin{gather}\label{eqn:arith:cusps|fixinfty}
     \Fix(\infty)=\left\{\left[
       \begin{array}{cc}
         a & b \\
         0 & 1 \\
       \end{array}
     \right]\mid a,b\in\RR,\;a>0\right\}
\end{gather}
In fact, the image of $\RR$ here is a normal subgroup of
$\Fix(\infty)$, and $\Fix(\infty)$ may be described as the split
extension $\Fix(\infty)\cong \RR\rtimes\RRp$, corresponding to the
natural action of (the multiplicative group) $\RRp$ on (the additive
group) $\RR$. Given a subgroup $H<\PGL_2^+(\RR)$ let us write
$\Fix_H(\infty)$ for the intersection $\Fix(\infty)\cap H$. We have
\begin{gather}\label{eqn:arith:cusps|FixGinfty}
     \Fix_G(\infty)\cong \QQ\rtimes\QQp.
\end{gather}
Observe that $\Fix_G(\infty)$ acts transitively on the subset
$\PP^1(\QQ)\subset\PP^1(\RR)$, and no element of $P\mc{L}_1$ is
fixed by any non-trivial element of the subgroup
$\QQp<\Fix_G(\infty)$. Observe also that the intersection
$\Fix_{G_1}(\infty)=\Fix_G(\infty)\cap G_1$ is contained in the
image (under $A\mapsto T^A$) of $\QQ$; indeed,
$\Fix_{G_1}(\infty)=\lab T\rab$. This property is shared by the
subgroups of $\PSL_2(\RR)$ that are commensurable with
$G_1\cong\PSL_2(\ZZ)$ (c.f. \S\ref{sec:arith:stabs}).
\begin{prop}\label{prop:arith:cusps|fixinftycommensG1inQQ}
Suppose $H<\PSL_2(\RR)$ is commensurable with $G_1$. Then
$\Fix_H(\infty)$ is contained in the image of $\QQ$ under $A\mapsto
T^A$.
\end{prop}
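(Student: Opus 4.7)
The plan is to exploit commensurability of $H$ with $G_1\cong\PSL_2(\ZZ)$ in order to reduce the claim to the already-noted identification $\Fix_{G_1}(\infty)=\lab T\rab$. First I would take an arbitrary $h\in\Fix_H(\infty)$; by the explicit description (\ref{eqn:arith:cusps|fixinfty}) I may represent $h$ in $\PGL_2^+(\RR)=\PSL_2(\RR)$ as $\bigl[\begin{smallmatrix} a & b \\ 0 & 1 \end{smallmatrix}\bigr]$ with $a\in\RRp$ and $b\in\RR$; the task is then to show that in fact $a=1$ and $b\in\QQ$.

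Next, I would invoke commensurability: since $H\cap G_1$ has finite index in $H$, its intersection with the cyclic subgroup $\lab h\rab$ still has finite index there, so some positive power $h^n$ lies in $G_1$. But $h^n$ continues to fix $\infty$, hence $h^n\in\Fix_{G_1}(\infty)=\lab T\rab$, which means $h^n=T^m$ for some $m\in\ZZ$.

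Finally, a short induction shows that $h^n$ is represented by the matrix $\bigl(\begin{smallmatrix} a^n & b(1+a+\cdots+a^{n-1}) \\ 0 & 1 \end{smallmatrix}\bigr)$. Both this matrix and $\bigl(\begin{smallmatrix} 1 & m \\ 0 & 1 \end{smallmatrix}\bigr)$ have bottom-right entry equal to $1$, so their equality in $\PGL_2^+(\RR)$ forces literal equality as matrices: $a^n=1$ and $b(1+a+\cdots+a^{n-1})=m$. Positivity of $a$ then yields $a=1$, and the second equation collapses to $nb=m$, so $b=m/n\in\QQ$, as required. The only point needing care is the bookkeeping with the projective equivalence relation on matrices; no deeper obstacle arises.
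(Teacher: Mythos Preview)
Your argument is correct and follows essentially the same route as the paper's proof: pass to a power of $h$ lying in $G_1$ by commensurability, then use that this power fixes $\infty$ to force $a^n=1$ (hence $a=1$ by positivity) and $nb\in\ZZ$. Your version merely spells out the matrix computation for $h^n$ and the identification $h^n\in\Fix_{G_1}(\infty)=\lab T\rab$ a bit more explicitly than the paper does.
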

\begin{proof}
Let $h\in \Fix_H(\infty)$. Choose $a,b\in \RR$ such that
\begin{gather}
     h=\left[
       \begin{array}{cc}
         a & b \\
         0 & 1 \\
       \end{array}
     \right].
\end{gather}
If $H$ is commensurable with $G_1$ then $h^n\in G_1$ for some $n\in
\ZZ$. This implies that $a^n=1$, and hence $a=1$, since $a$ must be
positive. Given that $a=1$, we have $nb\in \ZZ$, so that $b\in \QQ$,
and $h=T^b$ lies in the image of $\QQ$ in $\Fix_G(\infty)$, as
required.
\end{proof}

Recall that an element $\gamma\in\PSL_2(\RR)$ is called {\em
parabolic} if there is a unique fixed point for its action on
$\PP^1(\RR)$. Equivalently, $\gamma\in\PSL_2(\RR)$ is parabolic if
it is not the identity, and if $\gamma=[A]$ for some
$A\in\SL_2(\RR)$ with ${\rm tr}(A)=2$. For any subgroup
$H<\PGL_2^+(\RR)$ we may ask for the orbits of $H$ on $\PP^1(\RR)$
that are comprised of points that are fixed by some parabolic
element of $H$. (If $h\cdot s=s$ for some parabolic $h\in H$ and
some $s\in\PP^1(\RR)$ then every $s'\in H\cdot s$ is fixed by some
parabolic element of $H$.) Such an orbit for the action of $H$ on
$\PP^1(\RR)$ is called a {\em cusp} of $H$. We write $\mc{C}(H)$ for
the set of cusps of $H$, and we define
\begin{gather}
     C(H):=\bigcup_{H\cdot s\in\mc{C}(H)}H\cdot s,
\end{gather}
so that $C(H)$ consists of all the points in $\PP^1(\RR)$ which are
fixed by some parabolic element of $H$.
\begin{prop}\label{prop:arith:cusps|commens}
The assignment $H\mapsto C(H)$ is constant on commensurability
classes.
\end{prop}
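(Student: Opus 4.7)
The plan is a short two-step reduction. Commensurability of $H_1$ and $H_2$ means that $K := H_1 \cap H_2$ has finite index in each, so it suffices to prove $C(H) = C(H')$ in the special case that $H' < H$ is a subgroup of finite index.

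The inclusion $C(H') \subseteq C(H)$ is formal: every parabolic element of $H'$ is in particular a parabolic element of $H$, so any point it fixes lies in $C(H)$.

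The reverse inclusion $C(H) \subseteq C(H')$ is the substance of the argument. Given $s \in C(H)$ we have $h \cdot s = s$ for some parabolic $h \in H$. Finite index of $H'$ in $H$ forces some positive power $h^n$ to lie in $H'$. The key input is the observation that a non-trivial positive power of a parabolic element of $\PSL_2(\RR)$ is again parabolic: choosing $A \in \SL_2(\RR)$ representing $h$ with $\tr(A) = 2$, conjugation lets us assume $A$ is upper-triangular unipotent with non-zero superdiagonal entry $c$, so that $A^n$ is upper-triangular unipotent with superdiagonal entry $nc \neq 0$. Hence $\tr(A^n) = 2$ and $A^n \neq I$, so $h^n$ is parabolic, lies in $H'$, and fixes $s$; that is, $s \in C(H')$.

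The only non-trivial ingredient is the observation that positive powers of parabolic elements remain parabolic, which is an elementary $2 \times 2$ matrix computation; the rest of the argument is purely formal manipulation of the finite-index hypothesis.
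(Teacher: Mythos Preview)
Your proof is correct and follows essentially the same route as the paper: reduce to the finite-index case, note $C(H')\subset C(H)$ trivially, and for the reverse inclusion pass to a positive power $h^n\in H'$ and observe it remains parabolic. The only cosmetic difference is that the paper justifies the last point via Cayley--Hamilton ($A^2=\operatorname{tr}(A)A-I$ in $\SL_2(\RR)$) rather than by conjugating to unipotent upper-triangular form.
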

\begin{proof}
It suffices to show that $C(H)=C(H')$ whenever $H'$ is a subgroup of
finite index in $H$. Certainly, $C(H')\subset C(H)$ in this case, so
let $s\in C(H)$. Then there is some parabolic $h\in H$ such that
$h\cdot s=s$. Since $H'$ has finite index in $H$, there is some
$n>0$ such that $h^n\in H'$, but $h^n$ is parabolic whenever $h$ is
(since $A^2={\rm tr}(A)A-1$ for $A\in\SL_2(\RR)$), so there is a
parabolic element of $H'$ that fixes $s$, and $C(H)\subset C(H')$,
as required.
\end{proof}
For $H<\PGL_2^+(\RR)$ set $c(H)$ to be the number of cusps of $H$.
\begin{gather}
     c(H):=|\mc{C}(H)|
\end{gather}
It is easy to check that $C(G)=\PP^1(\QQ)$, and it is clear that $G$
acts transitively on $\PP^1(\QQ)$, so we have $c(G)=1$. In fact,
these statements remain valid with $G_1\cong\PSL_2(\ZZ)$ in place of
$G$.
\begin{gather}
     c(G_1)=1,\quad
     C(G_1)=\PP^1(\QQ),\quad
     \mc{C}(G_1)=\{G_1\cdot \infty\}.
\end{gather}
By Proposition \ref{prop:arith:cusps|commens}, we have
$C(H)=\PP^1(\QQ)$ for any subgroup of $\PGL_2^+(\RR)$ that is
commensurable with $G_1$. This has the following useful consequence.
\begin{prop}\label{prop:arith:cusps|commenswG1inQ}
If $H<\PSL_2(\RR)$ is commensurable with $G_1\cong\PSL_2(\ZZ)$ then
$H$ is contained in $G\cong\PGL_2^+(\QQ)$.
\end{prop}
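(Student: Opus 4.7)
The plan is to leverage Proposition \ref{prop:arith:cusps|commens} together with the classical fact that an element of $\PGL_2(\RR)$ is determined by its action on any three distinct points of $\PP^1(\RR)$. Since $H$ is commensurable with $G_1$, Proposition \ref{prop:arith:cusps|commens} yields $C(H)=C(G_1)=\PP^1(\QQ)$. By the very definition of $C(H)$ as the union $\bigcup_{H\cdot s\in\mc{C}(H)}H\cdot s$, this set is a union of $H$-orbits on $\PP^1(\RR)$, so $H$ preserves the subset $\PP^1(\QQ)\subset\PP^1(\RR)$ setwise.

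Given any $h\in H$, I would then consider the three distinct points $0, 1, \infty\in\PP^1(\QQ)$. Their images $h\cdot 0$, $h\cdot 1$, $h\cdot \infty$ are again three distinct points of $\PP^1(\QQ)$, by the preceding observation and the injectivity of $h$. Since $\PGL_2(\QQ)$ acts sharply $3$-transitively on $\PP^1(\QQ)$ via the usual Möbius-transformation formulas, there is a (unique) $g\in\PGL_2(\QQ)$ with $g\cdot 0=h\cdot 0$, $g\cdot 1=h\cdot 1$, and $g\cdot\infty=h\cdot\infty$. The element $g^{-1}h$ of $\PGL_2(\RR)$ then fixes three distinct points of $\PP^1(\RR)$ and is therefore the identity, so that $h=g$ in $\PGL_2(\RR)$, and in particular $h$ has a rational matrix representative.

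To conclude, I would note that $\PGL_2(\QQ)\cap\PGL_2^+(\RR)=\PGL_2^+(\QQ)$: since we are in dimension $n=2$, the sign of the determinant of a representative matrix in $\GL_2(\QQ)$ is preserved under $\QQm$-scaling, so a rational projective class lies in $\PGL_2^+(\RR)$ precisely when it lies in $\PGL_2^+(\QQ)$. As $h\in\PSL_2(\RR)=\PGL_2^+(\RR)$ by assumption, we obtain $h\in\PGL_2^+(\QQ)=G$. There is no real obstacle here; the entire argument is driven by the cusp-invariance result (Proposition \ref{prop:arith:cusps|commens}), with the three-point rigidity of Möbius transformations doing the rest of the work.
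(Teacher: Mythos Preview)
Your proof is correct and follows essentially the same approach as the paper's own proof: both invoke Proposition~\ref{prop:arith:cusps|commens} to see that $H$ stabilizes $\PP^1(\QQ)$, and then use the fact that a M\"obius transformation is determined by its values at $0$, $1$, $\infty$ to conclude rationality. Your write-up is more explicit---in particular you spell out the $3$-transitivity argument and the passage from $\PGL_2(\QQ)$ to $\PGL_2^+(\QQ)$---but the substance is the same.
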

\begin{proof}
We see from Proposition \ref{prop:arith:cusps|commens} that the
action of such a group $H$ on $\PP^1(\RR)$ stabilizes $\PP^1(\QQ)$.
Any element of $H$ can be written as a M\"obius transformation, and
in particular, is determined by its action on $0$, $1$ and $\infty$,
for example. Since these points are mapped to points of
$\PP^1(\QQ)$, any element of $H$ can be represented by a rational
matrix.
\end{proof}
Let $H<G$ such that $C(H)=\PP^1(\QQ)$. Then $c(H^g)=c(H)$ for any
$g\in G$, since $G$ acts transitively on $\PP^1(\QQ)$. This shows,
for example, that $c(G_L)=1$ for any $L\in P\mc{L}_1$.

\medskip

We may ask what the action of a group on $P\mc{L}_1$ says about its
cusps. The following lemma is useful in this regard.
\begin{lem}\label{lem:arith:cusps|bij}
Let $H$ be a group equipped with a transitive left-action on a set
$L$, and a transitive right-action on a set $R$. Let $l\in L$ and
$r\in R$, and write $H_x$ for the stabilizer of $x$ in $H$, for $x$
in $L$ or $R$. Then the $H_{r}$-orbits on $L$ are in bijective
correspondence with the $H_{l}$-orbits on $R$.
\begin{gather}\label{eqn:arith:cusps|bij}
     L/H_{r}\leftrightarrow H_{l}\backslash R
\end{gather}
\end{lem}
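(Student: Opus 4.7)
The plan is to exhibit both sides of \eqref{eqn:arith:cusps|bij} as the set of double cosets $H_r \backslash H / H_l$, via the standard bijections arising from transitive actions.

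First I would use the transitivity of the left action of $H$ on $L$ to set up the $H$-equivariant bijection
\begin{gather*}
     H/H_l \leftrightarrow L, \qquad hH_l \leftrightarrow h\cdot l.
\end{gather*}
The right action of $H_r$ on $L$ (which is well-defined because $H_r<H$ acts on $L$ through the left action of $H$, restricted to the subgroup; here one regards this as a right action by letting $h\in H_r$ send $x\in L$ to $h^{-1}\cdot x$, or alternatively one simply considers $H_r$-orbits on $L$) transports to the action of $H_r$ on $H/H_l$ by left multiplication. Hence the orbit space $L/H_r$ is in canonical bijection with $H_r \backslash H / H_l$.

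Next I would apply the symmetric argument on the other side: transitivity of the right action of $H$ on $R$ yields
\begin{gather*}
     H_r \backslash H \leftrightarrow R, \qquad H_r h \leftrightarrow r\cdot h,
\end{gather*}
and under this identification the action of $H_l$ corresponds to right multiplication by $H_l$. Consequently $H_l \backslash R$ is in canonical bijection with $H_r \backslash H / H_l$ as well.

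Composing the two bijections, one obtains the correspondence \eqref{eqn:arith:cusps|bij}. The main thing to be careful about is bookkeeping of sides: one must verify that passing from the $H$-orbit description to double cosets is compatible with the relevant subgroup actions in each case, and that the resulting composite bijection is well-defined on orbits (independent of representatives). Concretely, the composite may be described directly: given an $H_r$-orbit $H_r \cdot (h\cdot l) \subset L$, represented by $h \in H$, send it to the $H_l$-orbit $(r \cdot h) \cdot H_l \subset R$; checking independence of the choice of $h$ reduces exactly to the defining relations of $H_l$ and $H_r$, which is straightforward.
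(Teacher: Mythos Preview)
Your proposal is correct and follows essentially the same route as the paper: both arguments identify each side of \eqref{eqn:arith:cusps|bij} with a double coset space via the orbit--stabilizer bijection for a transitive action, and then conclude. The only cosmetic difference is that the paper lands in $H_l\backslash H/H_r$ while you land in $H_r\backslash H/H_l$; these are of course the same argument up to the inversion $h\mapsto h^{-1}$, and your explicit description of the composite bijection matches the paper's remark following the proof.
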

\begin{proof}
Since the $H$-actions are transitive, we have bijections
$L\leftrightarrow H_l\backslash H$ and $H/H_r\leftrightarrow R$, so
both sides of (\ref{eqn:arith:cusps|bij}) are in bijection with
$H_l\backslash H/H_r$. \end{proof} Note that the correspondence in
(\ref{eqn:arith:cusps|bij}) is given explicitly by
\begin{gather}
     l\cdot hH_r\leftrightarrow H_lh\cdot r.
\end{gather}
Lemma \ref{lem:arith:cusps|bij} has the following immediate
consequence.
\begin{prop}\label{prop:arith:cusps|cusps}
Let $H$ be a subgroup of $G$ with a single cusp, let $\mc{O}$ be an orbit for the action of $H$ on $P\mc{L}_1$, and let $H_0$ be the subgroup of $H$ fixing some $L_0\in \mc{O}$. Then the cusps of $H_0$ are in 
bijective correspondence with the orbits of $\Fix_H(\infty)$ on
$\mc{O}$.
\end{prop}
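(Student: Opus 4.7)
The plan is to deduce the proposition directly from Lemma \ref{lem:arith:cusps|bij}, taking $R=\mc{O}$ as a right $H$-set and $L=C(H)$ as a left $H$-set. The hypothesis that $H$ has a single cusp says that $C(H)$ is a single $H$-orbit, so the left action is transitive; since $\mc{O}$ is by definition an $H$-orbit, the right action is transitive as well. Taking $r=L_0$ gives $H_r=H_0$, and taking $l=\infty$ (which lies in $C(H)$ in the situation of interest, where $C(H)=\PP^1(\QQ)$) gives $H_l=\Fix_H(\infty)$. The lemma then delivers a canonical bijection
\begin{gather*}
     C(H)/H_0 \leftrightarrow \Fix_H(\infty)\backslash \mc{O}.
\end{gather*}

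To conclude, I need to identify the left-hand side with the set of cusps of $H_0$. Every parabolic element of $H_0$ is parabolic in $H$, so its fixed point lies in $C(H)$; hence $C(H_0)\subset C(H)$. For the reverse inclusion I would invoke Proposition \ref{prop:arith:cusps|commens}, which gives $C(H_0)=C(H)$ provided $H_0$ is commensurable with $H$. This commensurability comes from observing that $H_0=H\cap\Fix_G(L_0)$, and that $\Fix_G(L_0)$ is conjugate in $G$ to $G_1\cong\PSL_2(\ZZ)$ (as in \S\ref{sec:arith:stabs}); combined with the fact that any subgroup of $\PSL_2(\RR)$ with a single cusp at $\PP^1(\QQ)$ is commensurable with $G_1$, this gives that $H_0$ has finite index in each of $H$ and $\Fix_G(L_0)$, hence $C(H_0)=C(H)$. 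Thus the cusps of $H_0$ are precisely its orbits on $C(H)$, and the bijection above becomes the desired correspondence.

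Under the explicit form of Lemma \ref{lem:arith:cusps|bij}, the bijection reads $H_0\cdot(h\cdot\infty)\leftrightarrow(L_0\cdot h)\cdot\Fix_H(\infty)$, which is natural and transparent. The main obstacle is not the bijection itself --- that is essentially a double-coset manipulation --- but rather the verification that $C(H_0)=C(H)$, so that ``$H_0$-orbits on $C(H)$'' genuinely means ``cusps of $H_0$.'' This is the step where the arithmetic structure enters, via commensurability of $H_0$ with $H$ and Proposition \ref{prop:arith:cusps|commens}.
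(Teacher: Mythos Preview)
Your overall approach matches the paper's: the proposition is presented there as an immediate consequence of Lemma~\ref{lem:arith:cusps|bij}, with exactly the choices $L=C(H)$, $R=\mc{O}$, $l=\infty$, $r=L_0$ that you make. You go further than the paper in explicitly addressing why the $H_0$-orbits on $C(H)$ coincide with the cusps of $H_0$, i.e.\ why $C(H_0)=C(H)$, and you are right that this deserves a word.

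However, your justification of $C(H_0)=C(H)$ contains a genuine gap. The claim that ``any subgroup of $\PSL_2(\RR)$ with a single cusp at $\PP^1(\QQ)$ is commensurable with $G_1$'' is false: take $H=\langle T\rangle$. This group has a single cusp, namely $\{\infty\}\subset\PP^1(\QQ)$, yet $H$ has infinite index in $G_1$ and so is not commensurable with it. Thus you cannot deduce that $H_0$ has finite index in $H$, and Proposition~\ref{prop:arith:cusps|commens} does not apply as you invoke it.

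The conclusion $C(H_0)=C(H)$ is nonetheless correct, for a different reason. Certainly $C(H_0)\subset C(H)$ since $H_0<H$. For the reverse inclusion, let $x\in C(H)$ and pick a parabolic $p\in H$ fixing $x$. It suffices to show that some power $p^n$ lies in $G_{L_0}$, for then $p^n\in H\cap G_{L_0}=H_0$ is parabolic and fixes $x$. Conjugating, we may assume $p=T^b$ for some $b\in\QQm$. The group $G_{L_0}$ is a $G$-conjugate of $G_1$, hence discrete, and a short computation (as in the discussion around (\ref{eqn:arith:cusps|fixinfty})) shows that $\Fix_{G_{L_0}}(\infty)$ is infinite cyclic, generated by some $T^{e_0}$ with $e_0\in\QQp$. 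Since $b,e_0\in\QQm$, there is an $n\in\ZZp$ with $nb\in e_0\ZZ$, whence $p^n=T^{nb}\in G_{L_0}$ as required.
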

Suppose now that $H$ is a subgroup of $\PSL_2(\RR)$ commensurable
with $G_1$, so that $H$ is contained in $G\cong\PGL_2^+(\QQ)$, by
Proposition \ref{prop:arith:cusps|commenswG1inQ}, and suppose that
$H$ has a single cusp (i.e. acts transitively on $\PP^1(\QQ)$). For
a subgroup $H_0$ of $H$, we have $\mc{C}(H_0)=H_0\backslash
\PP^1(\QQ)$ for the cusps of $H_0$. If $H_0$ has finite index in $H$
then we may define a function
\begin{gather}\label{eqn:arith:cusps|widthfn}
     w_H:\mc{C}(H_0)\to\QQp
\end{gather}
in the following way. For $H_0\cdot x\in H_0\backslash\PP^1(\QQ)$
let $g\in H$ such that $x=g\cdot \infty$, and consider the
intersection $H_0^g\cap\Fix_H(\infty)$. Since $H$ is commensurable
with $G_1$, the group $\Fix_H(\infty)$ is an infinite cyclic group,
by Proposition \ref{prop:arith:cusps|fixinftycommensG1inQQ}. Since
$H_0$ has finite index in $H$, the intersection $H_0^g\cap
\Fix_H(\infty)$ is also an infinite cyclic group. Define
$w_H(H_0\cdot x)=A\in\QQp$ just when $T^A$ generates
$H_0^g\cap\Fix_H(\infty)$.
\begin{prop}
The function $w_H$, of (\ref{eqn:arith:cusps|widthfn}), is
well-defined.
\end{prop}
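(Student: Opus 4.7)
The plan is to show that the subgroup $H_0^g \cap \Fix_H(\infty)$ appearing in the definition does not depend on the choice of representative $x$ of the orbit $H_0 \cdot x$ nor on the choice of $g \in H$ with $g \cdot \infty = x$. These two ambiguities amount to the following single setup: suppose $g, g' \in H$ satisfy $g \cdot \infty = x$ and $g' \cdot \infty = x'$ with $x' = h_0 \cdot x$ for some $h_0 \in H_0$. I must show that $H_0^g \cap \Fix_H(\infty) = H_0^{g'} \cap \Fix_H(\infty)$. Once this is established, the unique positive rational $A$ with $T^A$ generating this intersection is well-defined: by Proposition \ref{prop:arith:cusps|fixinftycommensG1inQQ} the ambient group $\Fix_H(\infty)$ is contained in the image of $\QQ$ under $A \mapsto T^A$, and since $H_0^g$ has finite index in $H^g = H$, the intersection has finite index in the infinite cyclic group $\Fix_H(\infty)$, hence is itself infinite cyclic with a unique generator of the form $T^A$ with $A > 0$.

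From $g' \cdot \infty = h_0 g \cdot \infty$ I infer that $k := g^{-1} h_0^{-1} g'$ lies in $\Fix_H(\infty)$. Writing $g' = h_0 g k$ and using $h_0^{-1} H_0 h_0 = H_0$, a direct computation gives
\[
H_0^{g'} \;=\; (h_0 g k)^{-1} H_0 (h_0 g k) \;=\; k^{-1} g^{-1} H_0 g \,k \;=\; k^{-1} H_0^g k.
\]
Hence $H_0^{g'} \cap \Fix_H(\infty) = k^{-1} H_0^g k \cap \Fix_H(\infty)$.

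The crux is that $\Fix_H(\infty)$ is abelian, which is immediate from Proposition \ref{prop:arith:cusps|fixinftycommensG1inQQ} since it sits inside the abelian group $\{T^A \mid A \in \QQ\}$. Because $k \in \Fix_H(\infty)$, conjugation by $k$ acts trivially on every element of $\Fix_H(\infty)$, and it follows that
\[
k^{-1} H_0^g k \cap \Fix_H(\infty) \;=\; H_0^g \cap \Fix_H(\infty);
\]
indeed if $f = k^{-1} h k \in \Fix_H(\infty)$ for some $h \in H_0^g$ then $h = k f k^{-1} = f$, so $h \in \Fix_H(\infty)$, placing $f$ in $H_0^g \cap \Fix_H(\infty)$, and the reverse inclusion is identical. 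Combining the two displays yields the required equality $H_0^{g'} \cap \Fix_H(\infty) = H_0^g \cap \Fix_H(\infty)$, completing the proof. I do not anticipate any genuine obstacle here: the entire argument reduces to the abelianness of $\Fix_H(\infty)$, which has already been set up in the paragraph preceding the statement, and the only subtlety is confirming that the intersection is nontrivial so as to possess a well-defined positive generator.
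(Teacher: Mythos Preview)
Your proof is correct and follows essentially the same approach as the paper's: both rely on the fact that $\Fix_H(\infty)$ is abelian (being contained in $\{T^A : A \in \QQ\}$), so that conjugation by an element of it leaves intersections with it unchanged. The only difference is organizational---the paper treats the two ambiguities (choice of $g$ for fixed $x$, and choice of representative $x$) in separate steps, whereas you handle them simultaneously via the single element $k = g^{-1}h_0^{-1}g'$.
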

\begin{proof}
If $g,g'\in H$ satisfy $g\cdot\infty=g'\cdot\infty=x$ then
$g^{-1}g'=T^B$ for some $B\in\QQ$, since $\Fix_H(\infty)$ is
contained in the image of $\QQ$ in $\Fix_G(\infty)$, by Proposition
\ref{prop:arith:cusps|fixinftycommensG1inQQ}. We have
$H_0^{g'}=(H_0^{g})^{T^B}$, so that $H_0^{g'}\cap\Fix_H(\infty)$ and
$H_0^g\cap\Fix_H(\infty)$ coincide, since $T^B$ centralizes
$\Fix_H(\infty)$. This shows that the value of $w_H(H_0\cdot x)$
doesn't depend upon the choice of $g\in H$ mapping $\infty$ to $x$.
Suppose that $H_0\cdot x=H_0\cdot x'$. Then $x'=h\cdot x$ for some
$h\in H_0$, and $g\cdot \infty=x$ implies $hg\cdot \infty=x'$. We
have $H_0^{hg}=H_0^g$, so the value of $w_H$ at the cusp $H_0\cdot
x$ is independent of the choice of representative point $x\in
\PP^1(\QQ)$.
\end{proof}
For $H$ and $H_0$ as above, and $C\in\mc{C}(H_0)$, we call $w_H(C)$
the {\em width of $C$ relative to $H$}. If $H=G_1$ then $w_H$
recovers the usual notion of width for cusps of finite index
subgroups of the modular group. Observe that if $H'<\PSL_2(\RR)$ is
commensurable with $G_1$ and contains $H$, then $H_0$ is a subgroup
of finite index in $H'$, and the functions $w_{H'}$ and $w_{H}$
coincide on $\mc{C}(H_0)$. Consequently, for a group
$H_0<\PSL_2(\RR)$ commensurable with $G_1$, to know all the widths
of a cusp $H_0\cdot x\in H_0\backslash\PP^1(\QQ)$, it suffices to
know the values $w_H(H_0\cdot x)$ for each supergroup $H>H'$ that is
maximal subject to being commensurable with $G_1$. The subgroups of
$\PSL_2(\RR)$ that are maximal subject to being commensurable with
$G_1$ are determined by Proposition \ref{prop:arith:trees|stabcell}.

Even though the functions $w_H$ and $w_{H'}$ in general don't
coincide for $H_0<H\cap H'$, the particular value $w_H(H_0\cdot
\infty)$ is easily checked to be independent of the choice of
supergroup $H$. Thus we can speak unambiguously of the {\em width of
$H_0$ at $\infty$}, to be denoted $w(H_0\cdot\infty)$, whenever
$H_0$ is commensurable with $G_1$.
\begin{prop}\label{prop:arith:cusps|widthbyorbs}
Suppose $H$ and $H_0$ are as in Proposition
\ref{prop:arith:cusps|cusps}, and suppose that $H$ is commensurable
with $G_1$. Then
\begin{gather}
     \frac{w_H(H_0h\cdot\infty)}{A}
     =\#(L_0\cdot h\Fix_H(\infty)).
\end{gather}
for all $h\in H$, where $A\in \QQp$ is the width of $H$ at $\infty$.
\end{prop}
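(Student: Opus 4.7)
The plan is to chase through the definitions and reduce everything to the orbit--stabilizer theorem for the action of $\Fix_H(\infty)$ on the orbit $\mc{O}$.

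First I would unpack the left-hand side. Since $g = h$ satisfies $g\cdot\infty = h\cdot\infty$, the definition of $w_H$ gives that $w_H(H_0 h\cdot\infty) = B$, where $B\in\QQp$ is characterised by $H_0^{\,h}\cap\Fix_H(\infty)=\lab T^B\rab$. A short direct check (using that $H_0=\Fix_H(L_0)$ and that $H$ acts on $P\mc{L}_1$ on the right) shows
\begin{gather*}
     H_0^{\,h} = h^{-1}H_0h = \Fix_H(L_0\cdot h),
\end{gather*}
so the definition becomes $\Fix_H(L_0\cdot h)\cap\Fix_H(\infty)=\lab T^B\rab$.

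Next I would use the hypothesis that $H$ is commensurable with $G_1$. By Proposition \ref{prop:arith:cusps|fixinftycommensG1inQQ} the group $\Fix_H(\infty)$ is an infinite cyclic subgroup of the image of $\QQ$ under $A\mapsto T^A$, and by definition of the width of $H$ at $\infty$ we have $\Fix_H(\infty)=\lab T^A\rab$. The subgroup $\Fix_H(L_0\cdot h)\cap\Fix_H(\infty)$ of $\lab T^A\rab$ is then of the form $\lab T^{nA}\rab$ for a unique positive integer $n$, and comparing with the previous paragraph yields $B = nA$, i.e.\ $w_H(H_0h\cdot\infty)/A = n$.

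Finally I would identify $n$ with the size of the orbit. Applying the orbit--stabiliser theorem to the (right) action of $\Fix_H(\infty)$ on $\mc{O}$ at the point $L_0\cdot h$, whose stabiliser in $\Fix_H(\infty)$ is exactly $\Fix_H(L_0\cdot h)\cap\Fix_H(\infty) = \lab T^{nA}\rab$, gives
\begin{gather*}
     \#(L_0\cdot h\Fix_H(\infty))
     \;=\;[\Fix_H(\infty) : \Fix_H(L_0\cdot h)\cap\Fix_H(\infty)]
     \;=\;[\lab T^A\rab:\lab T^{nA}\rab]
     \;=\;n.
\end{gather*}
Combining this with $w_H(H_0h\cdot\infty)/A = n$ proves the claim. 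There is no real obstacle here; the only point requiring a little care is the identification $h^{-1}H_0 h = \Fix_H(L_0\cdot h)$, which must be done carefully because $H_0$ is a stabiliser for a right action while the conjugation convention reads left-to-right.
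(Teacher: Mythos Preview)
Your argument is correct and follows essentially the same route as the paper's own proof: both identify $H_0^{\,h}$ with $\Fix_H(L_0\cdot h)$, use $\Fix_H(\infty)=\lab T^A\rab$, and then equate $w_H(H_0h\cdot\infty)/A$ with the index of the stabiliser of $L_0\cdot h$ in $\lab T^A\rab$. The only cosmetic difference is that you invoke the orbit--stabiliser theorem explicitly, while the paper phrases the same fact as ``the orbit size is the smallest $n$ with $T^{nA}\in\Fix_H(L_0\cdot h)$''.
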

\begin{proof}
Observe that the size of the orbit $L_0\cdot h\Fix_H(\infty)$ is
just the smallest $n\in\ZZp$ such that $T^{nA}\in\Fix_H(L_0\cdot
h)$. Observe also that $T^B$, for $b\in \QQ$, belongs to
$\Fix_H(L_0\cdot h)$ if and only if $T^B\in H_0^h$. For
$T^B\in\Fix_H(L_0\cdot h)$ if and only if $hT^B=h_0h$ for some
$h_0\in H_0$, and this occurs if and only if $T^B\in h^{-1}H_0h$.
Thus if $B=w_H(H_0h\cdot \infty)$, so that $B$ is the smallest
positive rational such that $T^B\in H_0^h$, then $B=nA$ where $n$ is
$\#(L_0\cdot h\Fix_H(\infty))$. This completes the proof.
\end{proof}

Consider the case that $H=G_1$ is the modular group, and
$\mc{O}=HC_N(1)$ is the hypercircle of hyperradius $N$ about $L_1$
for some positive integer $N$ (c.f. \S\ref{sec:arith:stabs}), and
take $L_0=L_N$, so that $H_0=G_{(1,N)}$. By Proposition
\ref{prop:arith:cusps|cusps}, the cusps of $G_{(1,N)}$ are in
natural correspondence with the orbits of $\Fix_G(\infty)\cap
G_1=\lab T\rab$ on $HC_N(1)$, and by Proposition
\ref{prop:arith:cusps|widthbyorbs}, the width of each cusp is just
the cardinality of the corresponding orbit of $\lab T\rab$.

It turns out that the orbit structure of $\lab T\rab$ on $HC_N(1)$
is not difficult to describe. We will not give the full analysis
here (since none of it would be new), but we will furnish the
following first step, which will be of use in
\S\ref{sec:arith:orbits}.

\begin{lem}\label{lem:arith:cusps|HCppow}
Suppose $N=p^n$ for some positive integer $n$. Then $HC_N(1)$
consists of the lattices of the form $p^{n-2a},k/p^a$ where $a$ and
$k$ satisfy one of the following conditions.
\begin{enumerate}
     \item     $a=0$ and $k=0$;
     \item     $0<a<n$ and $0<k<p^a$ and $\gcd\{k,p\}=1$;
     \item     $a=n$ and $0\leq k< p^n$.
\end{enumerate}
In other words, we have
\begin{gather}
     \begin{split}
     HC_{p^n}(1)&=
     \{p^n\}
     \cup
     \left\{
          \frac{p^{n-a}}{p^a},\frac{k}{p^a}\mid
          0<a<n,\;0<k<p^a,\;\gcd\{k,p\}=1\right\}\\
          &\qquad\cup
     \left\{\frac{1}{p^n},\frac{k}{p^n}\mid
          0\leq k<p^n\right\}.
     \end{split}
\end{gather}
\end{lem}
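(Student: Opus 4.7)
The plan is to compute the hyperdistance $\delta(L_{M,b},L_1)$ explicitly via $\delta(L_{M,b},L_1)=\Pdet(g_{M,b})$ from \S\ref{sec:arith:names}, and then enumerate all $(M,b)$ with $M\in\QQp$ and $b\in\QQ\cap[0,1)$ for which this equals $p^n$. Since distinct such $(M,b)$ parametrize distinct projective lattices (this is the bijection $\mc{M}\leftrightarrow\PSL_2(\ZZ)\backslash\PGL_2^+(\QQ)$), matching the enumeration to the three families in the statement will finish the proof.

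First I would set up the computation of $\Pdet(g_{M,b})$. Write $M=r/s$ in lowest terms with $r,s\in\ZZp$, and, when $b\neq 0$, write $b=u/v$ in lowest terms with $v\in\ZZp$, $1\leq u<v$. Because the bottom-right entry of $g_{M,b}$ equals $1$, the minimal $\alpha\in\QQp$ clearing all denominators lies in $\ZZp$ and equals $\mathrm{lcm}(s,v)$ (taking $v=1$ when $b=0$). Hence
\[\Pdet(g_{M,b})=\alpha^2\det(g_{M,b})=\mathrm{lcm}(s,v)^2\cdot\tfrac{r}{s}.\]
For this to equal $p^n$, every prime dividing $r$, $s$, or $v$ must equal $p$; so I would write $r=p^i$, $s=p^j$ with $\min(i,j)=0$, and either $b=0$ (put $f=0$) or $v=p^f$ with $f\geq 1$, $1\leq u<p^f$, $\gcd(u,p)=1$. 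The hyperdistance condition becomes the diophantine system
\[2\max(j,f)+i-j=n,\qquad \min(i,j)=0.\]

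The main step is the case analysis on whether $f\leq j$ or $f>j$. If $f\leq j$, then $\max(j,f)=j$ and $i+j=n$, so $(i,j)\in\{(n,0),(0,n)\}$. The solution $(n,0)$ forces $f=0$, hence $b=0$, yielding $L_{p^n,0}$ (condition (1)). The solution $(0,n)$ has $j=n$ and $f$ free in $\{0,1,\dots,n\}$; this produces $M=1/p^n$ and $b=u/p^f$, and rewriting $b=(up^{n-f})/p^n$ with $k=up^{n-f}$ shows that $k$ ranges over all of $\{0,1,\dots,p^n-1\}$ as $(f,u)$ varies (the $p$-adic valuation of a nonzero such $k$ is $n-f$), which is condition (3). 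If instead $f>j$, set $a=f$; the equation reduces to $i-j=n-2a$ with $\min(i,j)=0$, so $M=p^i/p^j=p^{n-2a}$ whether we are in the branch $j=0$, $i=n-2a\geq 0$ or the branch $i=0$, $j=2a-n>0$. The constraint $f>j\geq 0$ combined with $i,j\geq 0$ forces $1\leq a\leq n-1$, and the side conditions $0<u<p^a$ and $\gcd(u,p)=1$ are exactly those of condition (2).

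The main obstacle I anticipate is just the bookkeeping in the case analysis, in particular the fact that in condition (3) the index $k$ is allowed to have arbitrary $p$-divisibility: this reflects that once $j=n$ is fixed, the reduced denominator $p^f$ of $b$ is free to be any $p^f$ with $0\leq f\leq n$. As a sanity check, the three families contribute $1$, $p^{n-1}-1$, and $p^n$ lattices respectively, for a total of $p^{n-1}(p+1)$, which is exactly the valence at distance $n$ in the $(p+1)$-regular $p$-adic tree of \S\ref{sec:arith:trees}.
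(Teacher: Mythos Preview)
Your argument is correct. The paper actually states this lemma without proof (it is presented at the end of \S\ref{sec:arith:cusps} as a ``first step'' whose verification is left to the reader), so there is no proof in the paper to compare against. Your computation of $\Pdet(g_{M,b})=\mathrm{lcm}(s,v)^2\,r/s$ is right because the bottom-right entry $1$ forces $\alpha_{g_{M,b}}\in\ZZp$, and then minimality over $\QQp$ coincides with minimality over $\ZZp$. The reduction to $p$-powers for $r,s,v$ is justified by comparing $q$-adic valuations in the identity $\mathrm{lcm}(s,v)^2 r=p^n s$ for any prime $q\neq p$. The case split $f\le j$ versus $f>j$ is clean: in the first case the two solutions $(i,j)=(n,0)$ and $(0,n)$ give conditions (1) and (3) respectively, and your remark that the reduced denominator $p^f$ of $b$ may be any $p^f$ with $0\le f\le n$ when $j=n$ is exactly why $k$ in condition (3) is allowed arbitrary $p$-divisibility. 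In the second case your observation that both branches ($j=0$ with $i=n-2a\ge 0$, and $i=0$ with $j=2a-n>0$) yield $M=p^{n-2a}$, together with $f>j$ forcing $1\le a\le n-1$, recovers condition (2) precisely. The cardinality check $1+(p^{n-1}-1)+p^n=p^{n-1}(p+1)$ matches the index formula (\ref{eqn:dyn:nodes|indexformula}) that the paper derives from this lemma, which is a good consistency check.
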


\subsection{Orbits}\label{sec:arith:orbits}

In this section we use Lemma \ref{lem:arith:cusps|HCppow} to furnish
a proof of Proposition \ref{prop:arith:stabs|GLtrans}. Since
$G\cong\PSL_2^+(\QQ)$ acts transitively on $P\mc{L}_1$, we have
verified Proposition \ref{prop:arith:stabs|GLtrans} as soon as we
show that $G_1\cong\PSL_2(\ZZ)$ acts transitively on the
hypercircles $HC_N(1)$ for all $N$. Actually, a stronger result is
true.
\begin{prop}
If $M$ and $N$ are positive integers with $\gcd\{M,N\}=1$, then the
group $G_{(1,M)}$ acts transitively on $HC_{N}(1)$.
\end{prop}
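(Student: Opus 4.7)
The plan is to treat the prime-power case by direct matrix manipulation using Lemma \ref{lem:arith:cusps|HCppow}, and then to reduce the general case via strong approximation for $\SL_2(\ZZ)$.

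For $N = p^n$ with $\gcd(p, M) = 1$, the group $G_{(1,M)}$ contains $T$ and, for every $j \in \ZZ$, the element $(T^{jM})^t$. I would compute the action of $(T^{jM})^t$ on $L_{p^n}$ explicitly. Writing $j = p^s j'$ with $\gcd(j', p) = 1$ and $0 \le s \le n$, Bezout (applicable because $\gcd(j' M, p^{n-s}) = 1$) supplies $u, v \in \ZZ$ with $u p^{n-s} + v j' M = 1$. Left-multiplying $g_{p^n}(T^{jM})^t$ by a suitable element of $\PSL_2(\ZZ)$ then reduces the product to the canonical form $g_{p^{n-2a},\,v/p^a}$ with $a = n - s$. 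As $s$ ranges over $\{0, 1, \ldots, n\}$ and $j' \bmod p^a$ over $(\ZZ/p^a)^\times$, the resulting pairs $(a, v \bmod p^a)$ realize every lattice listed in Lemma \ref{lem:arith:cusps|HCppow} subject to $\gcd(v, p) = 1$. The remaining lattices in the $a = n$ family (those with $p \mid k$) are then reached by post-composing with powers of $T$, which on $\{L_{1/p^n,\,k/p^n} \mid 0 \le k < p^n\}$ acts as the cyclic shift $k \mapsto k + 1 \pmod{p^n}$; starting from any lattice already obtained with $\gcd(k, p) = 1$ fills in the rest.

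For general $N = \prod_i p_i^{n_i}$ coprime to $M$, I argue as follows. Since $\Gamma(N)$ fixes every element of $HC_N(1)$ (as noted in \S\ref{sec:arith:stabs}), the action of any subgroup of $G_1$ on $HC_N(1)$ factors through $\PSL_2(\ZZ/N) \cong \prod_i \PSL_2(\ZZ/p_i^{n_i})$. The $p$-adic projections assemble into a $G_1$-equivariant map $\phi\colon HC_N(1) \to \prod_i HC_{p_i^{n_i}}(1)$, which is a bijection because both sides have cardinality $\prod_i p_i^{n_i-1}(p_i + 1)$. Strong approximation --- the surjectivity of $\SL_2(\ZZ) \to \SL_2(\ZZ/MN) \cong \SL_2(\ZZ/M) \times \SL_2(\ZZ/N)$ --- lets me lift any $A \in \SL_2(\ZZ/N)$ to an element of $\Gamma(M) \subset G_{(1,M)}$ by choosing a preimage of $(I, A)$. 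Hence $G_{(1,M)} \to \SL_2(\ZZ/N)$ is surjective, and combining this with the prime-power case applied to each factor yields transitivity of $G_{(1,M)}$ on each $HC_{p_i^{n_i}}(1)$ and therefore, via $\phi$, on $HC_N(1)$.

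The hardest step will be the bookkeeping in the prime-power case: checking that as $(s, j')$ varies, the Bezout parametrization exhausts the three families of lattices listed in Lemma \ref{lem:arith:cusps|HCppow}, and that the canonical-form reduction really does produce $a = n - s$ and $v \equiv (j' M)^{-1} \pmod{p^a}$. The reduction to prime powers is then routine once strong approximation and the $p$-adic factorization of $HC_N(1)$ are in place.
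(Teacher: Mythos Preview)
Your argument is correct, and the prime-power step is essentially the paper's: both reach every lattice in $HC_{p^n}(1)$ from $L_{p^n}$ by applying powers of $(T^{M})^t$ followed by powers of $T$. The paper packages the $(T^M)^t$-action using the \emph{reverse names} $\bar{L}_{b,A}$ of \S\ref{sec:arith:names}, so that the action becomes a visible shift $\bar{L}_{b,A}\mapsto\bar{L}_{b+AM,A}$; your Bezout reduction of $g_{p^n}(T^{jM})^t$ to canonical form is the same computation carried out by hand. Your bookkeeping is right: with $a=n-s$ you hit exactly the lattices of Lemma~\ref{lem:arith:cusps|HCppow} with $\gcd(k,p)=1$, and then $T$ (which sends $L_{1/p^n,\,k/p^n}$ to $L_{1/p^n,\,(k+1)/p^n}$) fills in the remaining $a=n$ family.

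The genuine difference is in the reduction to prime powers. The paper observes that the factors $HC_{p_i^{a_i}}(1)$ have pairwise coprime cardinalities $(p_i+1)p_i^{a_i-1}$, and uses the elementary fact that a (finite-quotient) group acting transitively on each of several sets of coprime order acts transitively on their product. You instead invoke strong approximation for $\SL_2$ to get $G_{(1,M)}\twoheadrightarrow\SL_2(\ZZ/N)$ via $\Gamma(M)$. Both work; the paper's route is lighter and stays within the elementary lattice framework set up in \S\ref{sec:arith}, while yours imports a stronger external input but would survive even without the coprimality of the factor sizes. One small redundancy in your plan: once strong approximation gives the surjection onto $\prod_i\SL_2(\ZZ/p_i^{n_i})$, you only need the prime-power transitivity for $M=1$ (to know each $\SL_2(\ZZ/p_i^{n_i})$ acts transitively on its factor), so the general-$M$ prime-power computation is not strictly needed in your reduction.
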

\begin{proof}
Consider first the case that $N=p^n$ is a prime power. Then $M$ is a
positive integer such that $p\nmid M$. The group $G_{(1,M)}$
contains both $T$ and $(T^M)^t$.
\begin{gather}
     T=\left[
         \begin{array}{cc}
           1 & 1 \\
           0 & 1 \\
         \end{array}
       \right],\quad
       (T^M)^t=\left[
         \begin{array}{cc}
           1 & 0 \\
           M & 1 \\
         \end{array}
       \right].
\end{gather}
Recall from Lemma \ref{lem:arith:cusps|HCppow} that $HC_{p^n}(1)$
consists of the lattices $L_{A,b}$ with $A=p^{n-a}/p^a$ and
$b=k/p^a$ for some $0\leq a\leq n$ and some $0\leq k<p^a$, with
$\gcd\{k,p^a\}=1$ in case $a<n$. Recall also, from
\S\ref{sec:arith:names}, that every projective lattice $L_{A,f/g}$
(for coprime positive integers $f,g$ with $0\leq f<g$) has a reverse
name $\bar{L}_{f'/g,1/g^2A}$, where $f'$ is the unique positive
integer less than $g$ such that $ff'\equiv 1\pmod{g}$. It is easy to
check then that $L_{A,b}$ belongs to $HC_{p^n}(1)$ just when
$\bar{L}_{b,A}$ does. The precise correspondence is as follows.
\begin{gather}
     \begin{split}\label{eqn:arith:orbits|revcorr}
     L_{p^n,0}&=\bar{L}_{0,1/p^n};\\
     L_{{p^{n-a}}/{p^a},{k}/{p^a}}
     &=\bar{L}_{k'/p^a,1/p^n},\quad p\nmid k,
          \;0<a<n;\\
     L_{1/p^n,k/p^a}&=\bar{L}_{k'/p^a,p^{n-a}/p^a},\quad p\nmid
     k,\;0<a\leq n;\\
     L_{1/p^n,0}&=\bar{L}_{0,p^n}.
     \end{split}
\end{gather}
In the above, $k'$ denotes the unique positive integer less than
$p^a$ such that $kk'\equiv 1\pmod{p^a}$. The reverse labels
$\bar{L}_{b,A}$ are convenient for describing the action of $T^t$.
For example, we have $\bar{L}_{b,A}\cdot (T^M)^t=\bar{L}_{b+AM,A}$.
In particular,
\begin{gather}
     \bar{L}_{0,1/p^n}\cdot (T^M)^t=\bar{L}_{M/p^n,1/p^n}.
\end{gather}
Since $M$ is coprime to $p$, we see that there is some power of
$T^t$ in $G_{(1,M)}$ that induces the permutation
\begin{gather}\label{eqn:arith:orbits|Ttperm}
     \bar{L}_{k/p^n,1/p^n}\mapsto \bar{L}_{(k+1)/p^n,1/p^n}
\end{gather}
on the lattices $\bar{L}_{b,A}$ in $HC_{p^n}(1)$ with $A=1/p^n$ and
$b=k/p^n$ for some $k$ satisfying $0\leq k<p^n$. Recall that $T\in
G_{(1,M)}$ induces a cyclic permutation on the lattices $L_{A,b}$ in
$HC_{p^n}(1)$ for the very same $A$ and $b$ --- viz. the permutation
obtained by removing the bars in (\ref{eqn:arith:orbits|Ttperm}).
Comparing with (\ref{eqn:arith:orbits|revcorr}) we see that the
$G_{(1,M)}$ orbit containing $L_{p^n,0}$ contains every lattice in
$HC_{p^n}(1)$; that is, $G_{(1,M)}$ acts transitively on
$HC_{p^n}(1)$ if $p\nmid M$.

More generally, consider the action of $G_{(1,M)}$ on $HC_{N}(1)$
for $N$ coprime to $M$. Then the elements of $HC_N(1)$ are in
natural correspondence with the elements of the cartesian product
\begin{gather}\label{eqn:arith:orbits|prodppows}
     HC_{p_1^{a_1}}(1)\times\cdots\times HC_{p_k^{a_k}}(1)
\end{gather}
for $N=p_1^{a_1}\cdots p_k^{a_k}$ a prime decomposition of $N$ (c.f.
\S\ref{sec:arith:trees}). The factors in
(\ref{eqn:arith:orbits|prodppows}) are sets of mutually coprime
order acted on transitively by $G_{(1,M)}$. It follows that
$G_{(1,M)}$ acts transitively on their product. This completes the
proof.
\end{proof}

\section{Diagrams}\label{sec:dyn} %

In this section we give our prescription for recovering McKay's
Monstrous $E_8$ observation (\ref{diag:intro_mdlrcorresp}) --- at
least, its reformulation in terms of discrete subgroups of
$\PSL_2(\RR)$ (c.f. \S\ref{sec:intro}) --- using elementary
properties of the group $\PSL_2(\RR)$.

\subsection{Setting}\label{sec:dyn:setting}

We adopt the setting of \S\ref{sec:arith} (more particularly, of
\S\ref{sec:arith:cusps}), with $\kk=\RR$, so that $V$ is an oriented
real vector space of dimension $2$, and ${\bf v}_1=(u_1,v_1)$ is an
ordered basis for $V$, and $V_1\subset V$ is the rational vector
space generated by $\{u_1,v_1\}$. The choice ${\bf v}_1\in \mc{B}^+$
(c.f. \S\ref{sec:arith:latts}) entails well-defined actions of the
groups $\PSL_2(\RR)$, $\PGL_2^+(\QQ)$, and $\PSL_2(\ZZ)$ on $V$, and
on various objects related to $V$ (c.f. \S\ref{sec:arith:stabs},
\S\ref{sec:arith:cusps}).

\subsection{Vertices}\label{sec:dyn:nodes}

If $\Gamma$ is a group and $\Gamma'$ is a finite index subgroup of
$\Gamma$, we write $[\Gamma:\Gamma']$ for the index of $\Gamma'$ in
$\Gamma$.

There is a well known formula for the index of $G_{(1,N)}$ (a.k.a
$\Gamma_0(N)$) in $G_1\cong \PSL_2(\ZZ)$. As demonstrated in
\cite{Con_UndrstndgGamma0N} it is easy to recover this formula by
considering the projections $\pi_p(L_N)$. For if $N=p_1^{a_1}\cdots
p_k^{a_k}$ is a prime decomposition of $N$, then, by Proposition
\ref{prop:arith:stabs|GLtrans}, the index of $G_{(1,N)}$ in $G_1$ is
the product over $i$ of the number of lattices at hyperdistance
$p_i^{a_i}$ from the distinguished lattice $L_1$ (c.f.
\S\ref{sec:arith:names}), since $\pi_p(L_N)=L_{p_i^{a_i}}$. The
cardinality of $HC_{p_i^{a_i}}(L_1)$ is $(p_i+1)p_i^{a_{i}-1}$ (by
Lemma \ref{lem:arith:cusps|HCppow}, for example). We thus obtain the
following expression,
\begin{gather}\label{eqn:dyn:nodes|indexformula}
     \left[G_1:G_{(1,N)}\right]
          =\prod_i(p_i+1)p_i^{a_i-1},
\end{gather}
which easily implies the following lemma.
\begin{lem}\label{lem:dyn:nodes|goodG1N}
If the index of $G_{(1,n)}$ in $G_1$ does not exceed $12$, then $n$
belongs to the following set.
\begin{gather}\label{list:dyn:nodes|ind12Ns}
     \{1,2,3,4,5,6,7,8,9,11\}
\end{gather}
\end{lem}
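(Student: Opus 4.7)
The plan is to apply the index formula (\ref{eqn:dyn:nodes|indexformula}) directly and enumerate the finitely many possibilities it leaves open. Since $[G_1:G_{(1,n)}]$ is multiplicative in the prime-power factors of $n$, the problem reduces to bounding which prime powers $p^a$ can divide an $n$ with $[G_1:G_{(1,n)}]\leq 12$, and then combining them.

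First I would observe that if $p\geq 13$ then the contribution $(p+1)p^{a-1}\geq 14$, already exceeding $12$; hence only the primes $p\in\{2,3,5,7,11\}$ can appear. For each such prime I would list the values $(p+1)p^{a-1}$ for $a\geq 1$ until the value surpasses $12$, obtaining: $p=2$ gives $3,6,12$ (for $a=1,2,3$); $p=3$ gives $4,12$ (for $a=1,2$); $p=5$ gives $6$ (for $a=1$); $p=7$ gives $8$; and $p=11$ gives $12$. All larger exponents already violate the bound.

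Next I would combine these prime-power contributions. Since the total index is a product of such contributions and must be $\leq 12$, no $n$ with two distinct prime divisors each contributing at least $4$ is possible (as $4\cdot 4=16>12$). This leaves the admissible composites $n=2\cdot 3 = 6$ (index $3\cdot 4=12$) as the only composite with more than one prime factor satisfying the bound, since $2\cdot 5$ gives $3\cdot 6=18$, $2\cdot 7$ gives $3\cdot 8=24$, etc. The admissible prime powers are $1,2,3,4,5,7,8,9,11$, and adding $6$ gives the list (\ref{list:dyn:nodes|ind12Ns}).

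I do not expect any real obstacle; the entire argument is a bounded enumeration enabled by the explicit formula (\ref{eqn:dyn:nodes|indexformula}). The only point requiring minimal care is making sure the prime-factor bounds are applied correctly for small exponents before concluding that the list is exhaustive.
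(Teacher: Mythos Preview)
Your argument is correct and is precisely the enumeration the paper has in mind: the paper does not spell out a proof but simply notes that the index formula (\ref{eqn:dyn:nodes|indexformula}) ``easily implies'' the lemma, and your bounded case analysis is the natural way to unpack that implication.
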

Given a group $\Gamma<\PSL_2(\RR)$ that is commensurable with
$G_1\cong\PSL_2(\ZZ)$, let us write $I^{G_1}_{\Gamma}$ for the index
of $\Gamma\cap G_1$ in $G_1$, and $I^{\Gamma}_{G_1}$ for the index
of $\Gamma\cap G_1$ in $\Gamma$.
\begin{equation}
     \xy
     (-23,0)*+{G_1}="G1";
     (23,0)*+{\Gamma}="Gamma";
     (0,-17)*+{\Gamma\cap G_1}="int";
     {\ar_*{I^{G_1}_{\Gamma}}@{-} "G1";"int"};
     {\ar^*{I_{G_1}^{\Gamma}}@{-} "Gamma";"int"};
     \endxy
\end{equation}
\begin{prop}\label{prop:dyn:nodes|nodes}
Suppose $\Gamma<\PSL_2(\RR)$ satisfies the following conditions.
\begin{enumerate}
\item     $\Gamma$ is arithmetic;\label{cond:dyn:nodes|arith}
\item     $\Gamma$ has width $1$ at $\infty$ (c.f.
\S\ref{sec:arith:cusps});\label{cond:dyn:nodes|width}
\item     there is some $N$ such that $\Gamma$ contains and normalizes
$G_{(1,N)}$, and the quotient $\Gamma/G_{(1,N)}$ is a group of
exponent $2$;\label{cond:dyn:nodes|exp2}
\item     $I_{\Gamma}^{G_1}\leq 12$ and
$I^{G_1}_{\Gamma}/I^{\Gamma}_{G_1}\leq
3$.\label{cond:dyn:nodes|ineq}
\end{enumerate}
Then $\Gamma$ is one of the groups in $\gt{E}$, where
\begin{gather}\label{list:dyn:nodes|nodegps}
     \gt{E}=
     \left\{
     G_{1},
     G_{(1,2)},
     G_{\{1,2\}},
     G_{\{1,3\}},
     G_{\{1,4\}},
     G_{\{1,5\}},
     G_{\{1,2,3,6\}},
     G_{\{2,4\}}^{(2)},
     G_{3}^{(3)}
     \right\}.
\end{gather}
\end{prop}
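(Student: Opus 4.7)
The plan is a finite case analysis organized by the parameter $N$. By condition (3) and the Atkin--Lehner Theorem (Theorem~\ref{thm:arith:stabs|AL}), $\Gamma$ sits between $G_{(1,N)}$ and its normalizer $G_{(h,N/h)+}$, where $h$ is the largest divisor of $24$ with $h^2\mid N$. Since $\Gamma/G_{(1,N)}$ has exponent $2$ it is elementary abelian; writing $[\Gamma:G_{(1,N)}]=2^k$ and $[\Gamma\cap G_1:G_{(1,N)}]=2^j$, condition (4) translates to $[G_1:G_{(1,N)}]\leq 12\cdot 2^j$ and $[G_1:G_{(1,N)}]\leq 3\cdot 2^k$. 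Moreover, width $1$ at $\infty$ (condition (2)) forces $T^{m/h}\notin\Gamma$ for all $1\leq m<h$, since $\Fix_{G_{(h,N/h)+}}(\infty)=\lab T^{1/h}\rab$ (noting that the Atkin--Lehner cosets do not fix $\infty$).

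The first step is to reduce to a finite list of candidate pairs $(N,h)$. Combining the index formula (\ref{eqn:dyn:nodes|indexformula}) with the constraint $h^2\mid N$, the inequality $[G_1:G_{(1,N)}]\leq 3\cdot 2^k$, and the fact that $2^k$ is bounded by the order of $G_{(h,N/h)+}/G_{(1,N)}$, one eliminates all but $h=1$, $N\in\{1,2,3,5,6,7,11\}$ and $h>1$, $N\in\{4,8,9\}$. Lemma~\ref{lem:dyn:nodes|goodG1N} handles most of the enumeration, and the remaining larger $(N,h)$ possibilities (such as $h=4$ forcing $N\geq 16$) are ruled out by the inequality on $[G_1:G_{(1,N)}]$.

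When $h=1$, $\Gamma$ lies in $G_{(1,N)+}$, width $1$ is automatic, and $\Gamma/G_{(1,N)}$ is any subgroup of the Atkin--Lehner group $(\ZZ/2)^{\omega(N)}$; running the index bound for each $N$ in turn recovers exactly $G_1$, $G_{(1,2)}$, $G_{\{1,2\}}$, $G_{\{1,3\}}$, $G_{\{1,5\}}$, and $G_{\{1,2,3,6\}}$, while $N\in\{7,11\}$ is eliminated because $\omega(N)=1$ forces $k\leq 1$ but the bound demands $k\geq 2$. When $h>1$ the width condition becomes decisive. For $N=4$ ($h=2$), $G_{(2,2)+}/G_{(1,4)}\cong\Sym_3$ and the three order-$2$ subgroups, minus the one generated by the image of $T^{1/2}$, yield $G_{\{1,4\}}$ and (redundantly) $G_{(1,2)}$. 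For $N=8$ ($h=2$), the structure $G_{(2,4)+}/G_{(1,8)}\cong D_4$ from \S\ref{sec:arith:chars:N=8} has exactly two Klein four subgroups: one contains the image of $T^{1/2}$ and is excluded, the other equals $G_{\{2,4\}}^{(2)}/G_{(1,8)}$. For $N=9$ ($h=3$), the normalizer $G_{(3,3)+}$ coincides with $G_3$ (the $+$-extension is trivial because the inner level $N/h^2=1$), so by \S\ref{sec:arith:chars:N=9} the ambient quotient is $\Alt_4$; the only exponent-$2$ subgroup meeting the index bound $12/2^k\leq 3$ is the Klein four normal subgroup, yielding $G_3^{(3)}$, and width $1$ is automatic since the image of $T^{1/3}$ is a $3$-cycle.

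The main obstacle will be the $h>1$ cases, where one must work with the concrete realizations of $G_{(h,N/h)+}/G_{(1,N)}$ developed in \S\ref{sec:arith:chars} and rule out all exponent-$2$ subgroups except the intended one using a mix of width (for $N=4,8$, where otherwise one extraneous Klein four subgroup would appear) and the index inequality (for $N=9$, where sub-Klein-four exponent-$2$ subgroups fail the bound). Particular care is needed to confirm that $G_{(h,N/h)+}/G_{(1,N)}$ has no unexpected extensions beyond those described in the referenced sections, so that the list of exponent-$2$ subgroups is truly exhausted.
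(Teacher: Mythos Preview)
Your approach is essentially the paper's---a finite case analysis governed by Atkin--Lehner, with the quotient structures of \S\ref{sec:arith:chars} deciding the $h>1$ cases---but you index the cases by $N$ whereas the paper indexes by $n=N/h$. That choice matters: since $\Gamma\cap G_1\leq G_{(1,n)}$, the first inequality of condition~\ref{cond:dyn:nodes|ineq} gives $[G_1:G_{(1,n)}]\leq 12$ \emph{immediately}, pinning $n$ to the list in Lemma~\ref{lem:dyn:nodes|goodG1N}. Your reduction instead relies on $[G_1:G_{(1,N)}]\leq 3\cdot 2^k$ together with a bound on $2^k$ coming from the size of $G_{(h,N/h)+}/G_{(1,N)}$, and this does not by itself cut the list down as far as you claim.

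Concretely, your candidate list omits $N=12$ (with $h=2$) and $N=18$ (with $h=3$), which the paper handles under $n=6$. For $N=12$ one has $[G_1:G_{(1,12)}]=24$ while $|G_{\{2,6\}}/G_{(1,12)}|=8$, so your stated bound allows $k=3$ and does \emph{not} eliminate the case; one must still either compute that the quotient is not elementary abelian of order $8$, or observe that the only remaining candidate $\Gamma=G_{\{2,6\}}$ contains $T^{1/2}$ and fails the width condition---exactly the check the paper carries out. Your parenthetical about $h=4$ forcing $N\geq 16$ likewise requires knowing the structure $(\ZZ/2\times\ZZ/2)\rtimes\Sym_3$ of $G_4/G_{(1,16)}$ to see that $2^k\leq 4$, which is again a computation you have not recorded. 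None of this breaks your strategy, but the reduction step is doing more work than you have written down; organizing by $n$ as the paper does makes that step genuinely short.
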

\begin{rmk}
Since $G_{(1,N)}$ and its normalizer are both commensurable with
$G_1$ for any $N$, condition \ref{cond:dyn:nodes|exp2} implies
condition \ref{cond:dyn:nodes|arith}.
\end{rmk}
\begin{rmk}
We may also write $G_{(1,N)+}$ for $G_{\{1,N\}}$ when
$N\in\{2,3,4,5\}$. We may write $G_{(1,6)+}$ for $G_{\{1,2,3,6\}}$.
The notation in (\ref{list:dyn:nodes|nodegps}) has been chosen
because it suggests, more strongly, how we can determine the correct
valence for each $\Gamma$ in $\gt{E}$ as a vertex in the affine
$E_8$ Dynkin diagram (cf. \S\ref{sec:dyn:edges:valency}).
\end{rmk}
\begin{proof}
By the Atkin--Lehner Theorem (Theorem \ref{thm:arith:stabs|AL}), the
intersection of the normalizer of $G_{(1,N)}$ with $G_1$ is
$G_{(1,N/h)}$ where $h$ is the largest divisor of $24$ such that
$h^2|N$. By condition \ref{cond:dyn:nodes|exp2} then, we have
$G_{(1,N)}<\Gamma\cap G_1<G_{(1,N/h)}$, for some $N$, with $h$ as in
the previous sentence. Equivalently, we have
\begin{gather}\label{eqn:dyn:nodes|boundingGamma1}
     G_{(1,hn)}<\Gamma\cap G_1<G_{(1,n)}
\end{gather}
for some $n$ and some $h$, where $h$ is a divisor of $\gcd\{n,24\}$
such that neither $4h$ nor $9h$ divide $n$.

By Lemma \ref{lem:dyn:nodes|goodG1N}, the only possibilities for the
$n$ in (\ref{eqn:dyn:nodes|boundingGamma1}) are those in the set
(\ref{list:dyn:nodes|ind12Ns}) if the first inequality of condition
\ref{cond:dyn:nodes|ineq} is to be satisfied; we will consider these
10 cases separately.

Let us agree to write $\Gamma_1$ for the intersection $\Gamma\cap
G_1$.
\begin{itemize}
\item{\it Case: $n=1$.} The inequality (\ref{eqn:dyn:nodes|boundingGamma1}) reduces
to $G_1<\Gamma_1<G_1$ in this case, so $\Gamma_1=G_1$. The
normalizer of $G_1$ is $G_1$, so $\Gamma=\Gamma_1=G_1$ in this case.
Conditions \ref{cond:dyn:nodes|arith} through
\ref{cond:dyn:nodes|ineq} are satisfied when $\Gamma=G_1$.

\item{\it Case: $n=2$.} If $n=2$ then $h\in\{1,2\}$.

If $h=1$ then $\Gamma_1=G_{(1,2)}$. The normalizer of $G_{(1,2)}$ is
$G_{\{1,2\}}$. The former group has index $2$ in the latter, so
$\Gamma$ is one of $G_{(1,2)}$ or $G_{\{1,2\}}$. The conditions
\ref{cond:dyn:nodes|arith} through \ref{cond:dyn:nodes|ineq} are
satisfied in both cases.

If $h=2$ then $\Gamma$ is assumed to normalize and contain
$G_{(1,4)}$, so that $G_{(1,4)}<\Gamma_1<G_{(1,2)}$, and
$\Gamma<G_2$.
\begin{gather}\label{diag:dyn:nodes|N=4}
     \xymatrix{
     G_1 \ar@{-}[dr]_{3} && G_2 \ar@{-}[dl]^{3}\ar@{-}[d]^{a}\\
     & G_{(1,2)}\ar@{-}[dd]_{2}\ar@{-}[dr]_{2/c} & \Gamma \ar@{-}[d]^{b}\\
     &                             &\Gamma_1\ar@{-}[dl]^c\\
     & G_{(1,4)} &
     }
\end{gather}
The indices of these containments are as in
(\ref{diag:dyn:nodes|N=4}), for some $a,b,c$. Evidently, $abc=6$. In
order for $\Gamma/G_{(1,4)}$ to have exponent $2$ (c.f. condition
\ref{cond:dyn:nodes|exp2}), it must be that $a=3$ or $a=6$.

If $a=6$ then $\Gamma=\Gamma_1=G_{(1,4)}$, and
$I^{G_1}_{\Gamma}/I^{\Gamma}_{G_1}=I^{G_1}_{\Gamma}=6$, in violation
of condition \ref{cond:dyn:nodes|ineq}. So $a=3$, and $bc=2$. If
$c=2$ then $b=1$ and $\Gamma=\Gamma_1=G_{(1,2)}$, and we have seen
already that this group satisfies conditions
\ref{cond:dyn:nodes|arith} through \ref{cond:dyn:nodes|ineq}. If
$b=2$ and $c=1$, then $\Gamma$ is a subgroup of index $3$ in $G_2$
whose intersection with $G_1$ is exactly $G_{(1,4)}$. The group
$G_2$ acts by permutations on $HC_2(2)$ --- the hypercircle of
hyperradius $2$ about $L_2$ --- and $G_{(1,4)}$ is exactly the
subgroup stabilizing $L_4$. It follows that the natural map
$G_2\to\Sym(HC_2(2))$ is surjective, and the image of $\Gamma$ under
this map is the group generated by one transposition.
\begin{equation}\label{diag:dyn:nodes|HC2(2)}
     \xy
     (0,20)*+{{1,{1}/{2}}}="1h";
     (0,0)*+{2}="2";
     (18,-10)*+{4}="4";
     (-18,-10)*+{1}="1";
     {\ar@{-} "1h";"2"};
     {\ar@{-} "1";"2"};
     {\ar@{-} "4";"2"};
     {\ar^*+{(T^2)^t} @/^1.5pc/ @{<.>} "1h";"4"};
     {\ar^*+{T^{1/2}} @/^1.5pc/ @{<.>} "1";"1h"};
     {\ar_*+{W_4} @/_1.5pc/ @{<.>} "1";"4"};
     \endxy
\end{equation}
Diagram (\ref{diag:dyn:nodes|HC2(2)}) displays the smallest subtree
of the $2$-adic tree (c.f. \S\ref{sec:arith:trees}) containing
$HC_2(2)$, together with elements of $G_2$ that give rise to each of
the transpositions in $\Sym (HC_2(2))$. (We may take
$W_4=g^{1/2}Sg^{2}$.) Our group $\Gamma$ does not contain $T^{1/2}$
by condition \ref{cond:dyn:nodes|width}, and it does not contain
$(T^2)^t$ since this element lies in $G_{1}$ but not in
$\Gamma_1=G_{(1,4)}$. The remaining possibility is that $\Gamma=\lab
G_{(1,4)},W_4\rab=G_{\{1,4\}}$. This group satisfies conditions
\ref{cond:dyn:nodes|arith} through \ref{cond:dyn:nodes|ineq}.

\item{\it Case: $n=3$.} If $n=3$ then $h\in\{1,3\}$.

If $h=1$ then $\Gamma_1=G_{(1,3)}$ and
     $\Gamma<G_{\{1,3\}}$. The group $G_{(1,3)}$ has index $2$ in
     $G_{\{1,3\}}$, so either $\Gamma=G_{(1,3)}$ or
     $\Gamma=G_{\{1,3\}}$. In the former case the second inequality of
     condition \ref{cond:dyn:nodes|ineq} is violated. In the latter
     case conditions \ref{cond:dyn:nodes|arith} through
     \ref{cond:dyn:nodes|ineq} are all satisfied.

If $h=3$ then $\Gamma$ is assumed to normalize and contain
$G_{(1,9)}$, so that $G_{(1,9)}<\Gamma_1<G_{(1,3)}$, and
$\Gamma<G_{3}$.
\begin{gather}\label{diag:dyn:nodes|N=9}
     \xymatrix{
     G_1 \ar@{-}[dr]_{4} && G_3 \ar@{-}[dl]^{4}\ar@{-}[d]^{a}\\
     & G_{(1,3)}\ar@{-}[dd]_{3}\ar@{-}[dr]_{3/c} & \Gamma \ar@{-}[d]^{b}\\
     &                             &\Gamma_1\ar@{-}[dl]^c\\
     & G_{(1,9)} &
     }
\end{gather}
The indices of these containments are as in
(\ref{diag:dyn:nodes|N=9}), for some $a,b,c$. Evidently, $abc=12$.
In order for $\Gamma/G_{(1,9)}$ to have exponent $2$ (c.f. condition
\ref{cond:dyn:nodes|exp2}), it must be that $3|a$, and in
particular, we must have $c=1$; i.e. $\Gamma_1=G_{(1,9)}$. In order
that the second inequality of condition \ref{cond:dyn:nodes|ineq} be
satisfied, we require that $b\geq 4$. Consequently, $\Gamma$ is a
subgroup of index $3$ in $G_3$ whose intersection with $G_1$ is
exactly the group $G_{(1,9)}$.

The group $G_3$ acts by permutations on $HC_3(3)$, and $G_{(1,9)}$
is exactly the kernel of the corresponding map $G_3\to\Sym(HC_3(3))$
(c.f. \S\ref{sec:arith:chars:N=9}). The image of $G_3$ under this
map is a copy of the alternating group on $4$ symbols. The image of
$\Gamma$ in $\Sym(HC_3(3))$ must be a subgroup of order $4$ in this
$\Alt_4$; such a subgroup is unique --- it is the kernel of any
non-trivial homomorphism $\Alt_4\to\ZZ/3$. We conclude that
$\Gamma=G_{3}^{(3)}$ (c.f. \S\ref{sec:arith:chars:N=9}). This group
satisfies conditions \ref{cond:dyn:nodes|arith} through
\ref{cond:dyn:nodes|ineq}.

\item{\it Case: $n=4$.} If $n=4$ then $h\in\{2,4\}$.

If $h=2$ then $G_{(1,8)}<\Gamma_1<G_{(1,4)}$ and
$\Gamma<G_{\{2,4\}}$.
\begin{gather}\label{diag:dyn:nodes|N=8}
     \xymatrix{
         G_1\ar@{-}[ddr]_6 &&G_{\{2,4\}}\ar@{-}[dl]^2 \ar@{-}[d]^a\\
     & G_{(2,4)} \ar@{-}[d]^{2}&\Gamma\ar@{-}[dd]^{b}\\
    & G_{(1,4)}\ar@{-}[dd]_{2}\ar@{-}[dr]_{2/c} &\\
     &                             &\Gamma_1\ar@{-}[dl]^c\\
     & G_{(1,8)} &
     }
\end{gather}
We have $abc=8$ for $a,b,c$ as in (\ref{diag:dyn:nodes|N=8}). The
group $G_{\{2,4\}}$ does not have width $1$ at $\infty$, so $a\geq
2$. On the other hand, by condition \ref{cond:dyn:nodes|ineq} we
have $I^{G_1}_{\Gamma}/I^{\Gamma}_{G_1}=12/bc\leq 3$, so $bc\geq 4$,
so we conclude $a=2$ and $bc=4$.

The group $G_{\{2,4\}}$ acts by permutations on the set
$S=HC_2(2)\cup HC_2(4)$, and the image of $G_{\{2,4\}}$ in $\Sym(S)$
is a group of the shape
\begin{gather}\label{eqn:dyn:nodes|G24plusmodG18}
     (\ZZ/2\times\ZZ/2)\rtimes\ZZ/2
\end{gather}
(i.e. a Dihedral group of order $8$) where the central factors
$\ZZ/2$ are generated by $T^{1/2}$ and $(T^4)^t$, and a non-central
$\ZZ/2$ is generated by the Atkin--Lehner involution $W_8$ (c.f.
\S\ref{sec:arith:chars:N=8}). The group $G_{(1,8)}$ is the kernel of
the natural map $G_{\{2,4\}}\to\Sym(S)$. Consequently, the image of
$\Gamma$ in $\Sym(S)$ is a subgroup of order four in
(\ref{eqn:dyn:nodes|G24plusmodG18}). By condition
\ref{cond:dyn:nodes|width} it does not contain $T^{1/2}$. By
condition \ref{cond:dyn:nodes|exp2} it is not cyclic. There is
exactly one possibility: $\Gamma=\lab G_{(1,8)}, (T^4)^tT^{1/2},
W_8\rab$; i.e. $\Gamma=G_{\{2,4\}}^{(2)}$ (c.f.
\S\ref{sec:arith:chars:N=8}). This group satisfies all the required
properties.

If $h=4$ then $G_{(1,16)}<\Gamma_1<G_{(1,4)}$ and $\Gamma<G_{4}$.
\begin{gather}\label{diag:dyn:nodes|N=16}
     \xymatrix{
     G_1 \ar@{-}[dr]_{6} && G_4 \ar@{-}[dl]^{6}\ar@{-}[d]^{a}\\
     & G_{(1,4)}\ar@{-}[dd]_{4}\ar@{-}[dr]_{4/c} & \Gamma \ar@{-}[d]^{b}\\
     &                             &\Gamma_1\ar@{-}[dl]^c\\
     & G_{(1,16)} &
     }
\end{gather}
We have $abc=24$ for $a,b,c$ as in (\ref{diag:dyn:nodes|N=16}). By
condition \ref{cond:dyn:nodes|exp2}, we have $3|a$. By condition
\ref{cond:dyn:nodes|ineq} we have
$I^{G_1}_{\Gamma}/I^{\Gamma}_{G_1}=24/bc\leq 3$, so $bc\geq 8$, and
this implies $a=3$ and $bc=8$.
\begin{equation}\label{diag:dyn:nodes|HC4(4)}
     \xy
     (-18,30)*+{1,1/4}="1q";
     (18,30)*+{1,3/4}="1qqq";
     (0,20)*+{2,1/2}="2h";
     (0,0)*+{4}="4";
     (18,-10)*+{8}="8";
     (18,-30)*+{16}="16";
     (-18,-10)*+{2}="2";
     (-18,-30)*+{1}="1";
     (-36,0)*+{1,1/2}="1h";
     (36,0)*+{4,1/2}="4h";
     {\ar@{-} "2h"; "4"};
     {\ar@{-} "2"; "4"};
     {\ar@{-} "8"; "4"};
     {\ar@{-} "1h"; "2"};
     {\ar@{-} "1"; "2"};
     {\ar@{-} "4h"; "8"};
     {\ar@{-} "16"; "8"};
     {\ar@{-} "1q"; "2h"};
     {\ar@{-} "1qqq"; "2h"};
     {\ar^*+{(T^4)^t} @/^1.5pc/ @{<.>} "2h";"8"};
     {\ar^*+{T^{1/4}} @/^1.5pc/ @{<.>} "2";"2h"};
     {\ar_*+{W_{16}} @/_1.5pc/ @{<.>} "2";"8"};
     {\ar_*+{T^{1/2}} @/^1.5pc/ @{<.>} "1"; "1h"};
     {\ar_*+{T^{1/2}} @/^1.5pc/ @{<.>} "1q"; "1qqq"};
     {\ar^*+{(T^{8})^t} @/^1.5pc/ @{<.>} "4h"; "16"};
     (0,40)*+{(T^8)^t}="T8t";
     \endxy
\end{equation}
The diagram in (\ref{diag:dyn:nodes|HC4(4)}) shows the part of the
$2$-adic tree containing $HC_4(4)$; the group $G_4$ acts as
automorphisms of this tree, and the subgroup $G_{(1,16)}$ fixes
every node. We see from (\ref{diag:dyn:nodes|HC4(4)}) that the
quotient $G_4/G_{(1,16)}$ has the structure
$(\ZZ/2\times\ZZ/2)\rtimes\Sym_3$, with the elementary abelian
subgroup generated by $T^{1/2}$ and $(T^{8})^t$, and the symmetric
group acting on it generated by $T^{1/4}$ and $(T^{4})^t$ (c.f.
(\ref{diag:dyn:nodes|HC2(2)})). If $\Gamma$ is a subgroup of $G_4$
of order $8$ containing $G_{(1,16)}$, then the image of $\Gamma$ in
$G_4/G_{(1,16)}$ contains the subgroup $\ZZ/2\times\ZZ/2$. It
follows that $\Gamma$ contains $T^{1/2}$, but this violates
condition \ref{cond:dyn:nodes|width}.

\item{\it Case: $n=5$.} If $n=5$ then $h=1$ and $\Gamma_1=G_{(1,5)}$,
and $G_{(1,5)}<\Gamma<G_{\{1,5\}}$. Similar to the case that
$N=nh=3$, the only possibility satisfying conditions
\ref{cond:dyn:nodes|arith} through \ref{cond:dyn:nodes|ineq} is
$\Gamma=G_{\{1,5\}}$.

\item{\it Case: $n=6$.} If $n=6$ then $h\in\{1,2,3\}$.

If $h=1$ then $\Gamma_1=G_{(1,6)}$, and $\Gamma<G_{\{1,2,3,6\}}$.
The group $G_{(1,6)}$ has index $12$ in $G_1$, and index $4$ in
$G_{\{1,2,3,6\}}$, so in order to satisfy condition
\ref{cond:dyn:nodes|ineq}, it must be that $\Gamma=G_{\{1,2,3,6\}}$.
This group satisfies all the required properties.

If $h=2$ then $G_{(1,12)}<\Gamma_1<G_{(1,6)}$, and
$\Gamma<G_{\{2,6\}}$. The group $G_{(1,6)}$ has index $12$ in $G_1$,
so it must be that $\Gamma_1=G_{(1,6)}$, by the first inequality of
condition \ref{cond:dyn:nodes|ineq}.
\begin{gather}\label{diag:dyn:nodes|N=12}
     \xymatrix{
         G_1\ar@{-}[ddr]_{12} &&G_{\{2,6\}}\ar@{-}[dl]^2 \ar@{-}[d]^a\\
     & G_{(2,6)} \ar@{-}[d]^{2}&\Gamma\ar@{-}[dl]^{b}\\
    & G_{(1,6)}=\Gamma_1\ar@{-} &\\
     }
\end{gather}
We have $ab=4$ for $a,b$ as in (\ref{diag:dyn:nodes|N=12}). For
condition \ref{cond:dyn:nodes|ineq} we require $b\geq 4$, so that
$\Gamma=G_{\{2,6\}}$, but this group does not satisfy condition
\ref{cond:dyn:nodes|width}. Diagram (\ref{diag:dyn:nodes|26cell})
shows the cell $\{L_2,L_6\}$ together with sufficiently many
adjacent nodes (in the $2$ and $3$-adic trees) so as to allow us to
display the action of $G_{\{2,6\}}/G_{(1,6)}$ (and verify the
indices in (\ref{diag:dyn:nodes|N=12})).
\begin{equation}\label{diag:dyn:nodes|26cell}
     \xy
     (-20,0)*+{3}="3";
     (0,0)*+{6}="6";
     (20,0)*+{12}="12";
     (-20,-25)*+{1}="1";
     (0,-25)*+{2}="2";
     (20,-25)*+{4}="4";
     {\ar@{-} "3"; "6"};
     {\ar@{-} "6"; "12"};
     {\ar@{-} "1"; "2"};
     {\ar@{-} "2"; "4"};
     {\ar@{=} "3"; "1"};
     {\ar@{=} "6"; "2"};
     {\ar@{=} "12"; "4"};
     {\ar_*+{W_2} @/_1.5pc/ @{<.>} "1";"4"};
     {\ar_*+{W_3} @/_1.5pc/ @{<.>} "3";"1"};
     \endxy
\end{equation}

If $h=3$ then $G_{(1,18)}<\Gamma_1<G_{(1,6)}$ and
$\Gamma<G_{\{3,6\}}$. Again, we must have $\Gamma_1=G_{(1,6)}$, by
condition \ref{cond:dyn:nodes|ineq}.
\begin{gather}\label{diag:dyn:nodes|N=18}
     \xymatrix{
         G_1\ar@{-}[ddr]_{12} &&G_{\{3,6\}}\ar@{-}[dl]^2 \ar@{-}[d]^a\\
     & G_{(3,6)} \ar@{-}[d]^{2}&\Gamma\ar@{-}[dl]^{b}\\
    & G_{(1,6)}=\Gamma_1\ar@{-} &\\
     }
\end{gather}
Considering indices (c.f. (\ref{diag:dyn:nodes|N=18})) we see that
$\Gamma$ must coincide with $G_{\{3,6\}}$ if condition
\ref{cond:dyn:nodes|ineq} is to be satisfied, but this group fails
to satisfy condition \ref{cond:dyn:nodes|width}. Diagram
(\ref{diag:dyn:nodes|36cell}) is the analogue of
(\ref{diag:dyn:nodes|26cell}) for $h=3$.
\begin{equation}\label{diag:dyn:nodes|36cell}
     \xy
     (-25,0)*+{2}="2";
     (0,0)*+{6}="6";
     (25,0)*+{18}="18";
     (-25,-20)*+{1}="1";
     (0,-20)*+{3}="3";
     (25,-20)*+{9}="9";
     {\ar@{-} "1"; "2"};
     {\ar@{-} "3"; "6"};
     {\ar@{-} "9"; "18"};
     {\ar@{=} "1"; "3"};
     {\ar@{=} "3"; "9"};
     {\ar@{=} "2"; "6"};
     {\ar@{=} "6"; "18"};
     {\ar^*+{W_2} @/^1.5pc/ @{<.>} "1";"2"};
     {\ar_*+{W_9} @/_1.5pc/ @{<.>} "1";"9"};
     \endxy
\end{equation}

\item{\it Case: $n=7$.} If $n=7$ then $h=1$, so we have $\Gamma_1=G_{(1,7)}$ and
$\Gamma<G_{\{1,7\}}$. Then $G_{(1,7)}$ has index $8$ in $G_1$, and
index $2$ in $G_{\{1,7\}}$, so there is no possibility for $\Gamma$
that satisfies condition \ref{cond:dyn:nodes|ineq}.

\item{\it Case: $n=8$.} If $n=8$ then $h\in\{4,8\}$. Since
$G_{(1,8)}$ has index $12$ in $G_1$, we have $\Gamma_1=G_{(1,8)}$.
The group $\Gamma$ is assumed to normalize $G_{(1,8h)}$, and the
quotient $\Gamma/G_{(1,8h)}$ should have exponent $2$, by condition
\ref{cond:dyn:nodes|exp2}. Observe that $G_{(1,8)}/G_{(1,8h)}$ is a
cyclic group of order $h$ --- we may take the image of $(T^8)^t$ as
a generator. Since $h>2$, there are no groups in $\gt{E}$ that arise
for $n=8$.

\item{\it Case: $n=9$.} For $n=9$ we have $h\in\{3,9\}$. Since
$G_{(1,9)}$ has index $12$ in $G_1$, we have $\Gamma_1=G_{(1,9)}$.
The group $\Gamma$ is assumed to normalize $G_{(1,9h)}$, and, by
condition \ref{cond:dyn:nodes|exp2}, it is required that
$\Gamma/G_{(1,9h)}$ have exponent $2$, but $G_{(1,9h)}$ has index
$h$ in $\Gamma_1$. We conclude that no elements of $\gt{E}$ arise
for $n=9$.

\item{\it Case: $n=11$.} Similar to the case that $n=7$, we must
have $\Gamma_1=G_{(1,11)}$ and $G_{(1,11)}<\Gamma<G_{\{1,11\}}$. But
$G_{(1,11)}$ has index $12$ in $G_1$, and index $2$ in
$G_{\{1,11\}}$, so there is no possibility for $\Gamma$ that
satisfies condition \ref{cond:dyn:nodes|ineq}.
\end{itemize}
We have accounted for all the groups appearing in the set $\gt{E}$,
and we have verified that no other subgroups of $\PSL_2(\RR)$
satisfy conditions \ref{cond:dyn:nodes|arith} through
\ref{cond:dyn:nodes|ineq}.
\end{proof}

\subsection{Edges}\label{sec:dyn:edges}

The groups in $\gt{E}$ are exactly those that label the vertices in
the diagram (\ref{diag:intro_mdlrcorresp}). We now seek to
reconstruct the edges of the diagram (\ref{diag:intro_mdlrcorresp}),
by examination of the properties of the groups in $\gt{E}$.

\subsubsection{Thread}\label{sec:dyn:edges:thread}

By condition \ref{cond:dyn:nodes|exp2} of Proposition
\ref{prop:dyn:nodes|nodes}, each group $\Gamma\in\gt{E}$ satisfies
\begin{gather}\label{eqn:dyn:edges:level|normlzr}
     G_{(1,N)}<\Gamma<N_G\left(G_{(1,N)}\right)=G_{(h,N/h)+}
\end{gather}
for some positive integer $N$ (where $h$ is the largest divisor of
$24$ such that $h^2$ divides $N$ --- c.f. Theorem
\ref{thm:arith:stabs|AL}). For $\Gamma\in\gt{E}$ let $N_{\Gamma}$ be
the minimal $N$ with this property. (The group $\Gamma=G_{(1,2)}$
satisfies (\ref{eqn:dyn:edges:level|normlzr}) for $N=2$ and $N=4$.)
Now let $a_{\Gamma}$ be the largest divisor of $24$ such that
$a_{\Gamma}^2$ divides $N_{\Gamma}$, and $G_{(a,N/a)}$ contains
$\Gamma\cap G_{(a,N/a)}$ to index $a$, for $a=a_{\Gamma}$ and
$N=N_{\Gamma}$.
\begin{equation}\label{diag:dyn:edges:level|as}
     \xy
     (0,0)*+{G_{(h,N/h)+}}="Ghpl";
     (-18,-10)*+{G_{(h,N/h)}}="Gh";
     (18,-10)*+{\Gamma}="Gamma";
     (-18,-30)*+{G_{(a,N/a)}}="Ga";
     (18,-30)*+{\Gamma\cap G_{(a,N/a)}}="Gammaa";
     (0,-40)*+{G_{(1,N)}}="G1N";
     {\ar@{-} "Ghpl"; "Gh"};
     {\ar@{-} "Ghpl"; "Gamma"};
     {\ar@{-} "Gh"; "Ga"};
     {\ar@{-}^*+{a} "Gamma"; "Gammaa"};
     {\ar@{-} "Ga"; "G1N"};
     {\ar@{-} "Gammaa"; "G1N"};
     \endxy
\end{equation}
\begin{lem}\label{lem:dyn:nodes|as}
If $\Gamma\neq G_{\{1,4\}}$ then $a_{\Gamma}$ is the largest divisor
of $24$ such that $a_{\Gamma}^2$ divides $N_{\Gamma}$; that is,
$a=a_{\Gamma}=h$ in (\ref{diag:dyn:edges:level|as}). If
$\Gamma=G_{\{1,4\}}$ then $a_{\Gamma}=1$.
\end{lem}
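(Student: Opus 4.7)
The plan is to verify the lemma by inspecting each of the nine groups in $\gt{E}$ individually. First I would identify $N_\Gamma$ for each $\Gamma\in\gt{E}$; the proof of Proposition \ref{prop:dyn:nodes|nodes} essentially accomplishes this, since in each of its subcases $\Gamma$ is exhibited as sitting inside a specific normalizer $G_{(h,N/h)+}$ with $G_{(1,N)}$ as the distinguished normal subgroup witnessing condition \ref{cond:dyn:nodes|exp2}. This yields $N_\Gamma=1,2,2,3,4,5,6,8,9$ for $G_1$, $G_{(1,2)}$, $G_{\{1,2\}}$, $G_{\{1,3\}}$, $G_{\{1,4\}}$, $G_{\{1,5\}}$, $G_{\{1,2,3,6\}}$, $G_{\{2,4\}}^{(2)}$, $G_3^{(3)}$, respectively.

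Six of these groups have $N_\Gamma\in\{1,2,3,5,6\}$, and for such $N_\Gamma$ the largest divisor $h$ of $24$ with $h^2\mid N_\Gamma$ is $h=1$. Here the only divisor of $24$ with $a^2\mid N_\Gamma$ is $a=1$, and the index condition $[G_{(1,N_\Gamma)}:\Gamma\cap G_{(1,N_\Gamma)}]=1$ is automatic from $G_{(1,N_\Gamma)}\subset\Gamma$. Hence $a_\Gamma=1=h$ for each of these six groups. The remaining three groups have $h>1$, and for each I would test the index condition at $a=h$ directly.

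For $\Gamma=G_{\{2,4\}}^{(2)}$ (so $N=8$, $h=2$), the presentation $\Gamma=\lab G_{(1,8)},(T^4)^tT^{1/2},W_8\rab$ from \S\ref{sec:arith:chars:N=8}, together with $W_8\notin G_{(2,4)}$, gives $\Gamma\cap G_{(2,4)}=\lab G_{(1,8)},(T^4)^tT^{1/2}\rab=G_{(2,4)}^{(2)}$, which has index $2$ in $G_{(2,4)}$ by construction. For $\Gamma=G_3^{(3)}$ (so $N=9$, $h=3$), the group lies inside $G_{(3,3)}=G_3$ by the definition in \S\ref{sec:arith:chars:N=9} and has index $3$ there, so $\Gamma\cap G_3=\Gamma$ and the index is $h=3$. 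In both cases the condition holds at $a=h$, whence $a_\Gamma=h$.

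The exceptional case is $\Gamma=G_{\{1,4\}}$ (so $N=4$, $h=2$). Here I would first check that the Fricke involution $W_4$ fixes $L_2$, the unique midpoint of $L_1$ and $L_4$ in the $2$-adic tree: the product $g_2W_4$ can be left-multiplied by an element of $\PSL_2(\ZZ)$ to yield a positive scalar multiple of $g_2$, showing $L_2\cdot W_4=L_2$. Consequently $G_{\{1,4\}}\subset G_2=G_{(2,2)}$, so $\Gamma\cap G_{(2,2)}=\Gamma$. The case $n=2$, $h=2$ of the proof of Proposition \ref{prop:dyn:nodes|nodes} already produced $[G_2:G_{\{1,4\}}]=3\neq 2$, so the index condition fails at $a=2$, forcing $a_\Gamma=1$. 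The main obstacle is conceptual and lies precisely in this anomaly: one must recognize that $G_{\{1,4\}}$ is built from $G_{(1,4)}$ by adjoining only a Fricke involution, and is not of the form $G_{(h,N/h)}^{(h)}$ arising from the character construction of \S\ref{sec:arith:chars}; consequently its image in the order-$6$ quotient $G_2/G_{(1,4)}$ is a subgroup of order $2$ yielding index $3$ rather than the $h=2$ required by the definition of $a_\Gamma$.
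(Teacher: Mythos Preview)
The paper states this lemma without proof, so there is no ``paper's own proof'' to compare against; your case-by-case verification is exactly the intended approach and is correct. Your identification of $N_\Gamma$ for each of the nine groups, the observation that six of them have $h=1$ (making the claim immediate), and your direct checks of the index condition for $G_{\{2,4\}}^{(2)}$, $G_3^{(3)}$, and $G_{\{1,4\}}$ are all sound; the computation showing $L_2\cdot W_4=L_2$ and hence $[G_2:G_{\{1,4\}}]=3\neq 2$ correctly isolates why $G_{\{1,4\}}$ is the exceptional case.
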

For each $\Gamma\in\gt{E}$ define a subgroup $\Gamma_0<\Gamma$ by
setting $\Gamma_0=\Gamma\cap G_{(a,N/a)}$, where $a=a_{\Gamma}$ and
$N=N_{\Gamma}$. If we agree to write also $G_{(1,N)}^{(1)}$ for
$G_{(1,N)}$, and $G_{(a,a)}^{(a)}$ for $G_{a}^{(a)}$, then we have
\begin{gather}
     \Gamma_0=G_{(a,N/a)}^{(a)}
\end{gather}
for $a=a_{\Gamma}$ and $N=N_{\Gamma}$, for each $\Gamma\in\gt{E}$.

\medskip

The groups $\Gamma_0$, for $\Gamma\in\gt{E}$, may be recovered in a
more geometric way as follows.

Relax for a moment the assumption that $\Gamma\in\gt{E}$, and
suppose, more generally, that $\Gamma$ is an arithmetic subgroup of
$\PSL_2(\RR)$ such that $\Gamma$ contains and normalizes some
$G_{(1,N)}$. Then $\Gamma$ stabilizes the set $(1,N)+$, and the
normalizer of $\Gamma$ stabilizes some subset of $(1,N)+$.

Reinstate now the assumption that $\Gamma\in\gt{E}$, and define
$S_{\Gamma}$ to be the largest subset of the $(1,N)$-thread that is
stabilized by $N_G(\Gamma)$. For each $\Gamma\in\gt{E}$ except for
$\Gamma=G_{\{1,4\}}$, the normalizer $N_G(\Gamma)$, of $\Gamma$, is
a maximal discrete subgroup of $\PSL_2(\RR)$, and therefore, by
Proposition \ref{prop:arith:trees|stabcell}, stabilizes a unique
cell in $P\mc{L}_1$. The set $S_{\Gamma}$ then recovers this cell.
In the case that $\Gamma=G_{\{1,4\}}$, we have $N_G(\Gamma)=\Gamma$,
while the group $G_2$ properly contains $\Gamma$. We have
$S_{{\Gamma}}=\{L_1,L_2,L_4\}$ (c.f.
(\ref{diag:arith:nodes|threads})) for $\Gamma=G_{\{1,4\}}$.

For $\Gamma\in\gt{E}$ consider the subgroup of $N_G({\Gamma})$
fixing the elements of $S_{{\Gamma}}$ point-wise; this group is
exactly $\Gamma_0$, for each $\Gamma\in\gt{E}$.

\medskip

The following lemma is easily checked
--- by inspection of the first two lines of Table
\ref{table:dyn:edges|vals}, for example.
\begin{lem}\label{lem:dyn:edges:thread|Gamma0normal}
For each $\Gamma\in\gt{E}$, the subgroup $\Gamma_0$ is contained
normally in $\Gamma$, and the quotient $\Gamma/\Gamma_0$ is a group
of exponent $2$.
\end{lem}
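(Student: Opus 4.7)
The plan is to run through the nine groups of $\gt{E}$ case by case, using the identification $\Gamma_0 = G_{(a_\Gamma, N_\Gamma/a_\Gamma)}^{(a_\Gamma)}$ together with the Atkin--Lehner description of $N_G(G_{(1,N)})$ supplied by Theorem \ref{thm:arith:stabs|AL}. For each $\Gamma$ we have a known expression for $N_\Gamma$ and $a_\Gamma$, so the remaining work is to verify normality and the exponent-$2$ property of $\Gamma/\Gamma_0$; this will be read off the structure of $G_{(h,N/h)+}/G_{(h,N/h)}$ as an elementary abelian $2$-group generated by the cosets $W_e$ of (\ref{eqn:arith:stabs|alinv}), together with the explicit presentations in \S\ref{sec:arith:chars:N=9} and \S\ref{sec:arith:chars:N=8}.

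For $\Gamma \in \{G_1, G_{(1,2)}, G_{3}^{(3)}\}$ one simply has $\Gamma_0 = \Gamma$ (the first two because $a_\Gamma = 1 = N_\Gamma/a_\Gamma$, the last because $a_\Gamma = 3 = N_\Gamma/a_\Gamma$), and there is nothing to check. For each of $G_{\{1,N\}}$ with $N \in \{2,3,5\}$, and also for $G_{\{1,4\}}$ (where Lemma \ref{lem:dyn:nodes|as} forces $a_\Gamma = 1$), we have $\Gamma_0 = G_{(1,N_\Gamma)}$, and $\Gamma$ is obtained from $\Gamma_0$ by adjoining a single Fricke involution $W_{N_\Gamma}$; since $W_{N_\Gamma}^2 \in G_{(1,N_\Gamma)}$ and $W_{N_\Gamma}$ normalises $G_{(1,N_\Gamma)}$ by Theorem \ref{thm:arith:stabs|AL}, we get $\Gamma_0$ normal in $\Gamma$ with quotient $\ZZ/2$. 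For $\Gamma = G_{\{1,2,3,6\}}$ we again have $a_\Gamma = 1$ and $\Gamma_0 = G_{(1,6)}$, and the cosets of $\Gamma_0$ in $\Gamma$ are exactly $W_1, W_2, W_3, W_6$; each is an involution modulo $W_1$, and the composition rule $W_e W_{e'} \equiv W_{ee'/\gcd(e,e')^2}$ shows the quotient is a Klein four-group, hence normal and of exponent $2$.

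The one case that demands more than bookkeeping is $\Gamma = G_{\{2,4\}}^{(2)}$, for which $N_\Gamma = 8$, $a_\Gamma = 2$, and $\Gamma_0 = G_{(2,4)}^{(2)}$. Comparing the presentations (\ref{eqn:arith:chars:N=8|G24}) and (\ref{eqn:arith:chars:N=8|G24plus}) gives $\Gamma = \langle \Gamma_0, W_8 \rangle$, and since $W_8^2 \in G_{(1,8)} \subset \Gamma_0$ it suffices to show $W_8 \Gamma_0 W_8^{-1} = \Gamma_0$. The cleanest route is to recall from \S\ref{sec:arith:chars:N=8} that $\Gamma_0$ is the kernel of the composition $G_{(2,4)} \to \Sym(S) \to \ZZ/2$, where $S = HC_2(2) \cup HC_2(4)$ and the second map is the unique nontrivial character of the dihedral image; since $W_8 \in G_{\{2,4\}}$ permutes $S$ and normalises $G_{(2,4)}$ (again by Theorem \ref{thm:arith:stabs|AL}), conjugation by $W_8$ commutes with this character, whence it preserves its kernel $\Gamma_0$, and the quotient $\Gamma/\Gamma_0$ is visibly $\ZZ/2$. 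This character-kernel argument is the main obstacle in the proof; the remainder is inspection of the table of values $(N_\Gamma, a_\Gamma, \Gamma_0)$.
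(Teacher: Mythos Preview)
Your approach is exactly the paper's --- case-by-case inspection, which the paper relegates to ``by inspection of the first two lines of Table~\ref{table:dyn:edges|vals}'' --- and your write-up is substantially correct. Two small points deserve correction.

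First, your parenthetical for $G_{(1,2)}$ is off: here $N_\Gamma = 2$ and $a_\Gamma = 1$, so $N_\Gamma/a_\Gamma = 2$, not $1$. The conclusion $\Gamma_0 = G_{(1,2)}^{(1)} = G_{(1,2)} = \Gamma$ is still right, just for the reason that $G_{(1,N)}^{(1)} = G_{(1,N)}$ by convention.

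Second, and more substantively, in the $G_{\{2,4\}}^{(2)}$ case your claim that the relevant $\ZZ/2$-valued character is ``the unique nontrivial character of the dihedral image'' is false: $D_4$ has abelianisation $(\ZZ/2)^2$, hence three nontrivial characters to $\ZZ/2$. So uniqueness cannot be the reason conjugation by $W_8$ preserves the kernel. The quickest repair is to bypass the character argument entirely: since $G_{(2,4)}$ has index $2$ in $G_{\{2,4\}}$ it is normal there, and one checks (from (\ref{eqn:arith:chars:N=8|G24}) and (\ref{eqn:arith:chars:N=8|G24plus})) that $\Gamma_0 = G_{(2,4)}^{(2)} = \Gamma \cap G_{(2,4)}$; hence $\Gamma_0$ is normal in $\Gamma$ with $\Gamma/\Gamma_0 \hookrightarrow G_{\{2,4\}}/G_{(2,4)} \cong \ZZ/2$. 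Alternatively, if you want to keep your line of argument, note that the image of $G_{(2,4)}$ in $\Sym(S)$ is $\ZZ/2 \times \ZZ/2$ with generators (the images of) $T^{1/2}$ and $(T^4)^t$, the character $\lambda$ is the product character, and conjugation by $W_8$ swaps the two generators --- which preserves the product character and hence its kernel.
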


The group $\Gamma_0$ may be regarded as the group we obtain from
$\Gamma$ by removing its Atkin--Lehner involutions.

\subsubsection{Level}\label{sec:dyn:edges:level}

Let $\Gamma$ be an arithmetic subgroup of $\PSL_2(\RR)$, and suppose
that $\Gamma$ is a congruence group (c.f. \S\ref{sec:arith:stabs}).
Then $\Gamma$ has a well-defined level; viz. the minimal positive
integer $N$ such that $\Gamma(N)<\Gamma$. Let us write
$\lev(\Gamma)$ for the level of an arithmetic congruence group
$\Gamma$. For $\Gamma\in\gt{E}$ we define the {\em normalized level}
of $\Gamma$, to be denoted $\levo(\Gamma)$, by setting
\begin{gather}
     \levo(\Gamma)=\frac{\lev(\Gamma)}{a_{\Gamma}}
\end{gather}
where $a_{\Gamma}$ is as in \S\ref{sec:dyn:edges:thread}.
\begin{table}[ht]
   \centering
   \caption{$\Gamma$, $a_{\Gamma}$ and $\levo(\Gamma)$, for $\Gamma\in\gt{E}$}
   \label{table:dyn:edges|levos}
     \begin{tabular}{|c|c|c|c|c|c|c|c|c|}
       \hline
       $G_1$ & $G_{\{1,2\}}$ & $G_{\{1,3\}}$ & $G_{\{1,4\}}$ & $G_{\{1,5\}}$ & $G_{\{1,2,3,6\}}$ & $G^{(3)}_3$ & $G_{\{2,4\}}^{(2)}$ & $G_{(1,2)}$ \\
       $1$ & $1$ & $1$ & $1$ & $1$ & $1$ & $3$ & $2$ & $1$ \\
       $1$ & $2$ & $3$ & $4$ & $5$ & $6$ & $3$ & $4$ & $2$ \\
       \hline
     \end{tabular}
\end{table}
The values $a_{\Gamma}$ and the normalized levels $\levo(\Gamma)$
are displayed in Table \ref{table:dyn:edges|levos}.

\subsubsection{Valency}\label{sec:dyn:edges:valency}

By Lemma \ref{lem:dyn:edges:thread|Gamma0normal}, the group
$\Gamma_0$ is normal in $\Gamma$ for each $\Gamma\in\gt{E}$, and the
quotient $\Gamma/\Gamma_0$ has exponent $2$. For $\Gamma\in\gt{E}$
we define the {\em valency} of $\Gamma$, to be denoted
$\val(\Gamma)$, by setting $\val(\Gamma)=m+1$, where $m$ satisfies
\begin{gather}
     \Gamma/\Gamma_0\cong (\ZZ/2)^{m}.
\end{gather}
The value $\val(\Gamma)$ encodes the number of Atkin--Lehner
involutions in $\Gamma$. The valencies of the groups in $\gt{E}$ are
recorded in Table \ref{table:dyn:edges|vals}.
\begin{table}[h]
   \centering
   \caption{$\Gamma$, $\Gamma_0$ and $\val(\Gamma)$, for $\Gamma\in\gt{E}$}
   \label{table:dyn:edges|vals}
     \begin{tabular}{|c|c|c|c|c|c|c|c|c|}
       \hline
       $G_1$ & $G_{\{1,2\}}$ & $G_{\{1,3\}}$ & $G_{\{1,4\}}$ & $G_{\{1,5\}}$ & $G_{\{1,2,3,6\}}$ & $G^{(3)}_3$ & $G_{\{2,4\}}^{(2)}$ & $G_{(1,2)}$ \\
       $G_1$ & $G_{(1,2)}$ & $G_{(1,3)}$ & $G_{(1,4)}$ & $G_{(1,5)}$ & $G_{(1,6)}$ & $G^{(3)}_3$ & $G_{(2,4)}^{(2)}$ & $G_{(1,2)}$ \\
       $1$ & $2$ & $2$ & $2$ & $2$ & $3$ & $1$ & $2$ & $1$ \\
       \hline
     \end{tabular}
\end{table}

\subsubsection{Faithfulness}\label{sec:dyn:edges:faith}

Recall that in the classical McKay Correspondence for the binary
icosahedral group $2.\Alt_5$, the nodes in the affine $E_8$ Dynkin
diagram are labeled by irreducible representations of $2.\Alt_5$. In
particular, four of the nodes correspond to faithful representations
of $2.\Alt_5$, and the remaining five nodes correspond to
representations that factor through $\Alt_5$.

With this in mind, we will say that a group $\Gamma\in\gt{E}$ is
{\em faithful} if $\Gamma$ is ``not far'' from $G_{\{1,2\}}$, in the
sense that $[\Gamma,\Gamma\cap G_{\{1,2\}}]\leq 2$. We set
$\gt{E}_1$ to be the subset of groups in $\gt{E}$ that are faithful,
and we set $\gt{E}_0=\gt{E}\setminus\gt{E}_1$.
\begin{gather}
     \gt{E}_1=\left\{
          G_{\{1,2\}},G_{\{1,4\}},G_{\{1,2,3,6\}},G_{(1,2)}
               \right\}\\
     \gt{E}_0=\left\{
          G_1,G_{\{1,3\}},G_{\{1,5\}},G_{3}^{(3)},G_{\{2,4\}}^{(2)}
               \right\}
\end{gather}

\subsubsection{Prescription}\label{sec:dyn:edges:pres}

The following proposition is easily checked.
\begin{prop}\label{prop:dyn:edges|edges}
There is a unique graph with vertex set $\gt{E}$ satisfying the
following properties.
\begin{itemize}
\item     The valence of $\Gamma\in\gt{E}$ is $\val(\Gamma)$.
\item     The identity $2\levo(\Gamma)=\sum_{\Gamma'\in\adj(\Gamma)}\levo(\Gamma')$
holds for all $\Gamma\in\gt{E}$, where $\adj(\Gamma)$ denotes the
set of vertices that are adjacent to $\Gamma$.
\item     If $\Gamma\in\gt{E}_1$ then $\adj(\Gamma)\subset\gt{E}_0$.
\end{itemize}
\end{prop}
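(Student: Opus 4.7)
The plan is to verify uniqueness by a constraint-propagation argument starting from the vertices of valency $1$, using the data of Tables~\ref{table:dyn:edges|levos} and~\ref{table:dyn:edges|vals}. First, summing the valencies gives $\sum_{\Gamma\in\gt{E}}\val(\Gamma)=16$, so any graph satisfying the first condition has exactly $8$ edges. The vertices of valency $1$ are $G_1$, $G_3^{(3)}$, and $G_{(1,2)}$; each is a leaf whose single neighbor $\Gamma'$ is forced by the sum condition to satisfy $\levo(\Gamma')=2\levo(\Gamma)$.

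Next, I would run through these three leaves in turn. The vertex $G_{(1,2)}\in\gt{E}_1$ has $\levo=2$, so its neighbor must have normalized level $4$ and lie in $\gt{E}_0$; the only candidate is $G_{\{2,4\}}^{(2)}$. The vertex $G_3^{(3)}$ has $\levo=3$, so its neighbor must have $\levo=6$; the only candidate in $\gt{E}$ is $G_{\{1,2,3,6\}}$. The vertex $G_1\in\gt{E}_0$ has $\levo=1$, so its neighbor has $\levo=2$ and lies in $\gt{E}_1$; the two candidates are $G_{\{1,2\}}$ and $G_{(1,2)}$, but the latter has already used its single slot, forcing the edge $G_1$--$G_{\{1,2\}}$.

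Now I would propagate. Having used one of the two slots of $G_{\{1,2\}}$, the remaining neighbor must contribute $\levo=3$ and, since $G_{\{1,2\}}\in\gt{E}_1$, lie in $\gt{E}_0$; the choices are $G_{\{1,3\}}$ and $G_3^{(3)}$, and $G_3^{(3)}$ is saturated, forcing $G_{\{1,2\}}$--$G_{\{1,3\}}$. The remaining slot of $G_{\{1,3\}}$ must contribute $\levo=4$; the options are $G_{\{1,4\}}$ and $G_{\{2,4\}}^{(2)}$. If the edge were $G_{\{1,3\}}$--$G_{\{2,4\}}^{(2)}$, then $G_{\{2,4\}}^{(2)}$ would have its two neighbors equal to $G_{(1,2)}$ and $G_{\{1,3\}}$, contributing $\levo$-sum $2+3=5\neq 2\cdot 4$; this contradicts the sum condition, so the edge must be $G_{\{1,3\}}$--$G_{\{1,4\}}$. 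Iterating in the same way at $G_{\{1,4\}}$, $G_{\{1,5\}}$, $G_{\{1,2,3,6\}}$, and $G_{\{2,4\}}^{(2)}$, one finds at each step that the $\levo$-constraint (together with the already-filled slots) picks out a unique remaining partner, and the process terminates with all valencies saturated and exactly $8$ edges drawn.

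There is no genuinely hard step here; the argument is a finite case check driven by the near-injectivity of $\levo$ on $\gt{E}$. The only place where the sum condition fails to determine an edge on its own is at $\levo=2,3,4$, where two vertices share a value, and in each such instance the faithfulness condition (together with the saturation of previously-processed leaves) resolves the ambiguity. The one subtlety worth flagging is the case analysis at $G_{\{1,3\}}$, where the failure of Case~B is detected only after tentatively drawing the edge and checking that the sum condition at $G_{\{2,4\}}^{(2)}$ is violated; this is the step most likely to be mis-executed if one tries to propagate too greedily without tracking the running $\levo$-sums.
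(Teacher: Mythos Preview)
Your argument is correct and is exactly the sort of finite verification that the paper has in mind: the paper offers no proof beyond the sentence ``The following proposition is easily checked,'' and your constraint-propagation supplies those details faithfully. One small remark: at the leaf $G_1$ you write that its neighbor ``lies in $\gt{E}_1$,'' but the third condition only constrains neighbors of $\gt{E}_1$-vertices, not of $\gt{E}_0$-vertices; the statement is nonetheless true (both $\levo=2$ vertices happen to be in $\gt{E}_1$) and in any case you do not use it, relying instead on saturation of $G_{(1,2)}$. You might also note that the look-ahead at $G_{\{1,3\}}$ can be avoided by propagating from $G_{\{2,4\}}^{(2)}$ first (its second neighbor must have $\levo=6$, forcing $G_{\{1,2,3,6\}}$), but your version is perfectly valid as written.
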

The graph whose existence and uniqueness is guaranteed by
Proposition \ref{prop:dyn:edges|edges} is displayed in
(\ref{diag:dyn:edges|graph}).
\begin{equation}\label{diag:dyn:edges|graph}
     \xy
     (0,0)*+{G_1}="1A";
     (0,0)*\xycircle(3,2){-};
     (15,0)*+{G_{\{1,2\}}}="2A";
     (30,0)*+{G_{\{1,3\}}}="3A";
     (45,0)*+{G_{\{1,4\}}}="4A";
     (60,0)*+{G_{\{1,5\}}}="5A";
     (75,0)*+{G_{\{1,2,3,6\}}}="6A";
     (85,10)*+{G_3^{(3)}}="3C";
     (87,-8)*+{G_{\{2,4\}}^{(2)}}="4B";
     (99,-16)*+{G_{(1,2)}}="2B";
     {\ar@{-} "1A";"2A"};
     {\ar@{-} "2A";"3A"};
     {\ar@{-} "3A";"4A"};
     {\ar@{-} "4A";"5A"};
     {\ar@{-} "5A";"6A"};
     {\ar@{-} "6A";"3C"};
     {\ar@{-} "6A";"4B"};
     {\ar@{-} "4B";"2B"};
     \endxy
\end{equation}
In terms of the more standard notation for the discrete groups of
Monstrous Moonshine, this is exactly the diagram
(\ref{diag:intro_mdlrcorresp}) we sought to recover.

\section{Super analogue}\label{sec:super}

For each $\Gamma\in\gt{E}$, we define an arithmetic group
$^s\Gamma<\PSL_2(\RR)$ as follows. Recall that for each
$\Gamma\in\gt{E}$, the corresponding group $\Gamma_0$ (c.f.
\S\ref{sec:dyn:edges:thread}) can be written in the form
$G_{(a,N/a)}^{(a)}$ for some $a=a_{\Gamma}$ and $N=N_{\Gamma}$.
Define groups $^s\Gamma_0$ by setting
\begin{gather}
     ^s\Gamma_0=G_{(a,2N/a)}^{(a)}
\end{gather}
when $\Gamma_0=G_{(a,N/a)}^{(a)}$. We will arrive at the group
$^s\Gamma$ by adjoining to $^s\Gamma_0$ the appropriate ``scalings''
of the Atkin--Lehner involutions of $\Gamma$; i.e. scalings of the
cosets $\Gamma/\Gamma_0$. More precisely, a coset $W_e(N)$ (c.f.
\S\ref{sec:arith:stabs}) in $\Gamma/\Gamma_0$, for $N=N_{\Gamma}$
and $e$ an exact divisor of $N$, is scaled (i.e. sent) to
$W_{2e}(2N)$ or $W_{e}(2N)$ according as $2|e$ or not.
\begin{gather}
     ^sW_{e}(N)=\begin{cases}
               W_{2e}(2N),&\text{ if $2|e$;}\\
               W_{e}(2N),&\text{ else.}
               \end{cases}
\end{gather}
This handles the cosets of $\Gamma_0$ in $\Gamma$ in the cases that
$\Gamma\neq G_{\{2,4\}}^{(2)}$. For $\Gamma=G_{\{2,4\}}^{(2)}$ we
apply the same idea, but the notation is slightly more involved: the
unique non-trivial coset in $\Gamma/\Gamma_0$ is $g_2^{-1}W_2(2)g_2$
in this case, and we set
$^2(g_2^{-1}W_2(2)g_2)=g_2^{-1}W_{4}(4)g_2$. Now we define
$^s\Gamma$, for $\Gamma\in\gt{E}$, by setting
\begin{gather}
     ^s\Gamma=\bigcup_{X\in\Gamma/\Gamma_0}{}^s\!X.
\end{gather}
It is straightforward to check that $^s\Gamma$ is an arithmetic
subgroup of $\PSL_2(\RR)$ for each $\Gamma\in\gt{E}$. Even more than
this, each group $^s\Gamma$ also appears in Monstrous Moonshine.
Define a set $^s\gt{E}$ by setting
\begin{gather}
     ^s\gt{E}=\left\{{}^s\Gamma\mid\Gamma\in\gt{E}\right\}
\end{gather}
We may consider the diagram obtained from
(\ref{diag:intro_mdlrcorresp}) by replacing the labels
$\Gamma\in\gt{E}$ with the corresponding groups
$^s\Gamma\in{}^s\gt{E}$. In the notation of \cite{ConNorMM} and
\cite{ConMcKSebDiscGpsM}, the labeling of the affine $E_8$ Dynkin
diagram thus obtained is displayed in
(\ref{diag:super|smdlrcorresp}).
\begin{equation}\label{diag:super|smdlrcorresp}
     \xy
     (0,0)*+{2}="1A";
     (0,0)*\xycircle(2,2){-};
     (14,0)*+{4+}="2A";
     (28,0)*+{6+6}="3A";
     (42,0)*+{8+}="4A";
     (56,0)*+{10+10}="5A";
     (70,0)*+{12+}="6A";
     (80,10)*+{6\|3}="3C";
     (82,-7)*+{8\|2+}="4B";
     (94,-14)*+{4}="2B";
     {\ar@{-} "1A";"2A"};
     {\ar@{-} "2A";"3A"};
     {\ar@{-} "3A";"4A"};
     {\ar@{-} "4A";"5A"};
     {\ar@{-} "5A";"6A"};
     {\ar@{-} "6A";"3C"};
     {\ar@{-} "6A";"4B"};
     {\ar@{-} "4B";"2B"};
     \endxy
\end{equation}
Each of the groups in (\ref{diag:super|smdlrcorresp}) is attached to
a unique conjugacy class of the Monster group, curtesy of Monstrous
Moonshine. Each conjugacy class $nZ$ in $\MM$ so obtained has the
property that if $g\in nZ$ then $g^{n/2}\in 2B$. Further, each of
these conjugacy classes is a ``lift'' of some conjugacy class in the
Conway group $\Co_0$. More precisely, for each group $^s\Gamma$ in
$^s\gt{E}$ there is a Frame shape\footnote{a generalization of the
cycle notation for permutations, which encodes the eigenvalues of an
orthogonal transformation of finite order that is writable over
$\ZZ$.} $A_1^{\alpha_1}A_2^{\alpha_2}\cdots$ say, such that the eta
product
\begin{gather}
     \frac{\eta(A_1\tau)^{\alpha_1}\eta(A_2\tau)^{\alpha_2}\cdots}
          {\eta(2A_1\tau)^{\alpha_1}\eta(2A_2\tau)^{\alpha_2}\cdots}
\end{gather}
furnishes a principal modulus (a.k.a. hauptmodul) for $^s\Gamma$,
and also encodes the eigenvalues of the elements of a unique
conjugacy class of $\Co_0$.
\begin{equation}\label{diag:super|conwaycorresp}
     \xy
     (0,0)*+{1^{24}}="1A";
     (0,0)*\xycircle(3,2){-};
     (12,0)*+{2^{24}/1^{24}}="2A";
     (28,0)*+{3^{12}/1^{12}}="3A";
     (42,0)*+{4^8/1^8}="4A";
     (56,0)*+{5^6/1^6}="5A";
     (73,0)*+{2^66^6/1^63^6}="6A";
     (83,10)*+{3^8}="3C";
     (86,-8)*+{4^{12}/2^{12}}="4B";
     (98,-16)*+{1^82^8}="2B";
     {\ar@{-} "1A";"2A"};
     {\ar@{-} "2A";"3A"};
     {\ar@{-} "3A";"4A"};
     {\ar@{-} "4A";"5A"};
     {\ar@{-} "5A";"6A"};
     {\ar@{-} "6A";"3C"};
     {\ar@{-} "6A";"4B"};
     {\ar@{-} "4B";"2B"};
     \endxy
\end{equation}
Diagram (\ref{diag:super|conwaycorresp}) shows the labeling obtained
by replacing the groups in (\ref{diag:super|smdlrcorresp}) with the
corresponding Frame shapes of $\Co_0$.

Evidently, (\ref{diag:super|conwaycorresp}) furnishes a
reformulation of McKay's Monstrous $E_8$ observation, in which
conjugacy classes of the Monster are replaced by conjugacy classes
of $\Co_0$.

It is amusing to observe that the highest root labeling of the
affine $E_8$ Dynkin diagram can be read off directly from the Frame
shapes in (\ref{diag:super|conwaycorresp}): consider the maximal
$A_i$ in $A_1^{\alpha_1}A_2^{\alpha_2}\cdots$ (i.e. the order of the
corresponding class in $\Co_0$). One can also predict the valency of
the vertex corresponding to any Frame shape in
(\ref{diag:super|conwaycorresp}): the valency of the vertex labeled
$A_1^{\alpha_1}A_2^{\alpha_2}\cdots$ is the number of negative
$\alpha_i$ plus $1$.

A direct link between the classes in
(\ref{diag:super|conwaycorresp}) and the principal moduli of the
groups in (\ref{diag:super|smdlrcorresp}) can be obtained by
considering the McKay--Thompson series associated to conjugacy
classes of $\Co_0$ via the action of this group on a suitably
defined vertex operator superalgebra (c.f. \cite{Dun_VACo}).

\section*{Acknowledgement}

This article is dedicated to John McKay, with gratitude and respect.
The author is grateful to Noam Elkies and Curtis McMullen for
helpful conversations


\end{document}